\providecommand{\U}[1]{\protect\rule{.1in}{.1in}}
\newtheorem{theorem}{Theorem}
\newtheorem{condition}[theorem]{Condition}
\newtheorem{corollary}[theorem]{Corollary}
\newtheorem{definition}[theorem]{Definition}
\newtheorem{example}[theorem]{Example}
\newtheorem{proposition}[theorem]{Proposition}
\newtheorem{remark}[theorem]{Remark}
\newenvironment{proof}[1][Proof]{\textbf{#1.} }{\ \rule{0.5em}{0.5em}}
\begin{document}

\author{Ognyan Kounchev
\and Hermann Render
\and Tsvetomir Tsachev}
\title{Fast algorithms for interpolation with L-splines for differential operators
$L$ of order $4$ with constant coefficients$.$ \thanks{The work of H. Render
and Ts. Tsachev was funded under project KP-06-N32-8, while the work of O.
Kounchev was funded under project KP-06N42-2 with Bulgarian NSF.} }
\date{}
\maketitle

\begin{abstract}
In the classical theory of cubic interpolation splines there exists an algorithm which works 
with only $O\left(  n\right)$ arithmetic operations. Also, the smoothing cubic splines may be computed via  
the algorithm of Reinsch which  reduces their computation to interpolation cubic splines and 
also performs with $O\left(  n\right)$ arithmetic operations. 
In this paper it is shown that 
many features of the polynomial cubic spline setting carry over to the larger class
of $L$-splines where $L$ is a linear differential operator of order $4$ with
constant coefficients. Criteria are given such that the associated matrix $R$
is strictly diagonally dominant which implies the existence of a fast
algorithm for interpolation.

\end{abstract}

{\bf Keywords: } splines; L-splines; interpolation splines; smoothing splines

\begin{center}
Ognyan Kounchev \newline Institute of Mathematics and Informatics, Bulgarian
Academy of Sciences, \newline Acad. G. Bonchev str., block 8, 1113 Sofia,
Bulgaria, \texttt{kounchev@math.bas.bg} 

\bigskip
Hermann Render \newline School of Mathematics and Statistics, University
College Dublin, \newline Belfield, Dublin 4, Ireland,
\texttt{hermann.render@ucd.ie} 

\bigskip
Tsvetomir Tsachev \newline Institute of Mathematics and Informatics, Bulgarian
Academy of Sciences, \newline Acad. G. Bonchev str., block 8, 1113 Sofia,
Bulgaria, \texttt{tsachev@math.bas.bg}
\end{center}

\section{Introduction}

Interpolation and smoothing with polynomial splines is an important technique
in nonparametric regression. It is outlined in \cite{RaHeSi97} that smoothing
with more general types of splines allows to include additional prior
knowledge for estimating the data and to retain several aspects of the data.
For example, for the estimation of the gross domestic product data it is
assumed that the model curve is a linear combination of the functions
\begin{equation}
1,\exp\left(  \gamma t\right)  ,\sin\omega t,\cos\omega t, \label{eqExx1a}%
\end{equation}
while for melanoma data the model curve is linear combination of
\begin{equation}
1,t,\sin\omega t,\cos\omega t. \label{eqExx2a}%
\end{equation}
In \cite{GreenSilverman}, \cite{Gu}, \cite{RamsaySilverman} the reader can
find an efficient algorithm for $L$-spline smoothing based on reproduction
kernel techniques associated to the differential operator. On the other hand
it is a natural question whether the ideas and methods for cubic splines (see
\cite{Rein67}) can be modified to obtain efficient algorithms for interpolation
and smoothing for special classes of $L$-splines. In our recent paper
\cite{KounchevRenderTsachevBIT} we have shown this for the case where the
differential operator $L$ is of the form
\[
L=\left(  \frac{d}{dt}-\xi^{2}\right)  ^{2}%
\]
and $\xi$ is a real number. Our motivation for the analysis of these
univariate problems stems from the fact that these differential operators play
an important role in the study of polysplines of order $2$ on strips (see
\cite{kounchevrenderappr}, \cite{kounchevrenderpams}, \cite{KoReCM},
\cite{kounchevrenderJAT}). For the study of polysplines of order $2$ on
annular regions in $\mathbb{R}^{d}$ differential operators of the form
\[
L=\left(  \frac{d}{dt}-k\right)  \left(  \frac{d}{dt}-k-2\right)  \left(
\frac{d}{dt}+k+2-d\right)  \left(  \frac{d}{dt}+k+4-d\right)
\]
are needed where $k\in\mathbb{N}_{0}$. This has motivated us to study the
question whether the results for classical cubic splines can be extended to
any linear differential operators $L$ with constant coefficients of order $4$.
Note that the basis functions in examples (\ref{eqExx1a}) and (\ref{eqExx2a})
are also solutions of differential operators of order $4.$

Let us recall that a linear differential operator of order $N+1$ is of the
form
\begin{equation}
L=L_{\left(  \lambda_{0},...,\lambda_{N}\right)  }=\prod_{j=0}^{N}\left(
\frac{d}{dx}-\lambda_{j}\right)  . \label{neuLa}%
\end{equation}
where $\lambda_{0},...,\lambda_{N}$ are arbitrary complex numbers, and we say
that a function $g:\left[  a,b\right]  \rightarrow\mathbb{C}$ is an $L$-spline
of order $N+1$ with knots $t_{1}<...<t_{n}$ if $g$ is $N-1$ times continuously
differentiable such that the restriction of $g$ to each interval $\left(
t_{j},t_{j+1}\right)  $ is a solution of the equation $L_{N}\left(  g\right)
=0$, see e.g. \cite{Sc81}. Moreover we define
\begin{equation}
E\left(  \lambda_{0},...,\lambda_{N}\right)  :=\left\{  f\in C^{N}\left(
\mathbb{R},\mathbb{C}\right)  :L_{N}f=0\right\}
\end{equation}
where $C^{N}\left(  \mathbb{R},\mathbb{C}\right)  $ is the space of all
$N$-times continuously differentiable complex-valued functions $f:\mathbb{R}%
\rightarrow\mathbb{C}$. The elements in $E_{\left(  \lambda_{0},\ldots
,\lambda_{N}\right)  }$ are called \emph{exponential polynomials} or
$L$\emph{-polynomials, }and $\lambda_{0},\ldots,\lambda_{N}$ are called
\emph{exponents} or \emph{frequencies} (see e.g.\ Chapter 3 in \cite{BeGa95}).

It is matter of fact that many important formulas occurring in the analysis of
$L$-splines can be expressed in terms of the so-called fundamental function
$\Phi_{\Lambda_{N}}$ -- and that this is a great advantage has been
demonstrated in the case of error estimates for interpolation of $L$-splines
of order $4$ in \cite{KounchevRender2021JCAM}. Let us recall that for every
$\Lambda_{N}=\left(  \lambda_{0},\ldots,\lambda_{N}\right)  \in\mathbb{C}%
^{N+1}$ there exists a unique solution $\Phi_{\Lambda_{N}}\in E_{\left(
\lambda_{0},\ldots,\lambda_{N}\right)  }$ to the Cauchy problem
\[
\Phi_{\Lambda_{N}}\left(  0\right)  =\ldots=\Phi_{\Lambda_{N}}^{\left(
N-1\right)  }\left(  0\right)  =0\text{ and }\Phi_{\Lambda_{N}}^{\left(
N\right)  }\left(  0\right)  =1,
\]
and $\Phi_{\Lambda_{N}}$ is called the \emph{fundamental function} in
$E_{\left(  \lambda_{0},\ldots,\lambda_{N}\right)  }$ (see Proposition 13.12
in \cite{okbook}, or Chapter $9$ in \cite{Sc81}, or \ \cite{micchelli}). Since
$\Phi_{\Lambda_{N}}$ is an analytic function there exists $\delta>0$ such
that
\[
\Phi_{\Lambda_{N}}\left(  x\right)  \neq0\text{ for all }x\in\left(
0,\delta\right)  .
\]
Throughout this paper we make the following assumption:

\begin{condition}
\label{ConditionSTAR} For a given linear differential operator $L_{\left(
\lambda_{0},...,\lambda_{3}\right)  }$ assume that the nodes $t_{_{1}%
}<...<t_{_{n}}$ in the definition of an $L$-spline satisfy
\[
t_{j+1}-t_{j}<\delta\quad\quad\text{ for all }j=0,...,n-1
\]
where
\[
\delta=\min\left\{  \frac{2\pi}{\left\vert \operatorname{Im}\left(
\lambda_{1}-\lambda_{0}\right)  \right\vert },\frac{2\pi}{\left\vert
\operatorname{Im}\left(  \lambda_{3}-\lambda_{2}\right)  \right\vert
}\right\}  .
\]

\end{condition}

It is easy to see (cf. Proposition \ref{PropImaginary}) that Condition
\ref{ConditionSTAR} implies that
\begin{equation}
\Phi_{\left(  \lambda_{_{0}},\lambda_{_{1}}\right)  }\left(  t\right)
\neq0\text{ and }\Phi_{\left(  \lambda_{_{2}},\lambda_{_{3}}\right)  }\left(
t\right)  \neq0 \label{PhiCondb}%
\end{equation}
for all $t\in\left(  0,\delta\right)  .$ Note that for real $\lambda
_{0},...,\lambda_{3}$ the imaginary part of $\lambda_{1}-\lambda_{0}$ and
$\lambda_{3}-\lambda_{0}$ is zero and we may choose $\delta=\infty$ in
Condition \ref{ConditionSTAR}.

Now let us recall some basic facts about interpolation splines in the
classical polynomial setting: for given real numbers $t_{1}<...<t_{n}$ with
$n>2$ there exists a symmetric positive definite $\left(  n-2\right)
\times\left(  n-2\right)  $ matrix $R$ and a $n\times\left(  n-2\right)  $
matrix $Q$ (we follow here the notation given in \cite{GreenSilverman}) such
that
\begin{equation}
Q^{T}\mathbf{g}=R\gamma\label{eq1}%
\end{equation}
for any natural (i.e. with vanishing second derivative at $t_{1}$ and at
$t_{n}$) cubic spline $g$ with knots $t_{1}<...<t_{n}$ where \
\begin{align}
\mathbf{g}^{T}  &  =\left(  g\left(  t_{1}\right)  ,g\left(  t_{2}\right)
,...,g\left(  t_{n}\right)  \right) \label{eq2}\\
\gamma^{T}  &  =\left(  g^{\prime\prime}\left(  t_{2}\right)  ,g^{\prime
\prime}\left(  t_{3}\right)  ,...,g^{\prime\prime}\left(  t_{n-1}\right)
\right)  . \label{eq3}%
\end{align}
The matrix $R$ can be described explicitly: it has tridiagonal form and for
the diagonal and the two off-diagonals we have
\[
R_{jj}=\frac{1}{3}\left(  h_{j-1}+h_{j}\right)  \text{ and }R_{j,j+1}%
=R_{j+1,j}=\frac{1}{6}h_{j}%
\]
where $h_{j}=t_{j+1}-t_{j}$. The algorithm for the interpolation splines and
the Reinsch algorithm for smoothing splines are based on the matrices $R$ and
$Q$ allowing the computation of the interpolating \emph{natural} spline in
$O\left(  n\right)  $ arithmetic operations. In the context of $L$-splines we define

\begin{definition}
An $L-$spline for $\Lambda_{3}=\left(  \lambda_{0},\lambda_{1},\lambda
_{2},\lambda_{3}\right)  $ is called \textbf{natural} if it satisfies the
conditions
\[
L_{(\lambda_{0},\lambda_{1})}g\left(  t_{1}\right)  =L_{(\lambda_{0}%
,\lambda_{1})}g\left(  t_{n}\right)  =0.
\]

\end{definition}

Note that the definition of an $L$-spline does not depend on the order of
$\lambda_{0},...,\lambda_{3}$ in the vector $\Lambda_{3}=\left(  \lambda
_{0},\lambda_{1},\lambda_{2},\lambda_{3}\right)  .$ However, the definition of
a \emph{natural} spline is related to the first two entries $\lambda
_{0},\lambda_{1}$ in the vector $\left(  \lambda_{0},\lambda_{1},\lambda
_{2},\lambda_{3}\right)  $ (we may interchange $\lambda_{0}$ and $\lambda
_{1}).$

Now let us outline the results in this paper: In Section 2 we shall recall
several properties of the fundamental function $\Phi_{\Lambda_{N}}.$ In
Section 3 we shall show that under the general Condition \ref{ConditionSTAR}
one can define a $\left(  n-2\right)  \times\left(  n-2\right)  $ matrix $R$
and a $n\times\left(  n-2\right)  $ matrix $Q$ such that the relation
(\ref{eq1}) holds for the vectors $\mathbf{g},$ and $\gamma$ defined by \
\begin{align}
\mathbf{g}^{T}  &  =\left(  g\left(  t_{1}\right)  ,g\left(  t_{2}\right)
,...,g\left(  t_{n}\right)  \right) \label{g}\\
\gamma^{T}  &  =\left(  L_{\left(  \lambda_{0},\lambda_{1}\right)  }g\left(
t_{2}\right)  ,L_{\left(  \lambda_{0},\lambda_{1}\right)  }g\left(
t_{3}\right)  ,...,L_{\left(  \lambda_{0},\lambda_{1}\right)  }g\left(
t_{n-1}\right)  \right)  \label{gamma}%
\end{align}
where $g$ is a natural spline -- hence $\gamma_{j}:=L_{\left(  \lambda
_{0},\lambda_{1}\right)  }g\left(  t_{j}\right)  $ is equal to zero for $j=1$
and $j=n.$ The entries of the matrix $R$ and $Q$ will depend on the step size
$h_{j}=t_{j+1}-t_{j}$ and on the frequencies $\lambda_{0},...,\lambda_{3}$ --
and on the order of $\lambda_{0},..,\lambda_{3}$ in the vector $\left(
\lambda_{0},\lambda_{1},\lambda_{2},\lambda_{3}\right)  .$ More precisely, we
will show that the matrix $R$ is tridiagonal and that the diagonal and the
first off-diagonal entries are given by
\begin{align}
R_{j,j}  &  =\rho\left(  t_{j}-t_{j-1}\right)  -\rho\left(  -\left(
t_{j+1}-t_{j}\right)  \right) \label{eqRjj}\\
R_{j,j+1}  &  =\sigma\left(  t_{j+1}-t_{j}\right)  \quad\text{ and }\quad
R_{j+1,j}=-\sigma\left(  -\left(  t_{j+1}-t_{j}\right)  \right)
\label{eqRjj1}%
\end{align}
where $\rho$ and $\sigma$ are functions defined by
\begin{align}
\rho\left(  x\right)   &  =\frac{\Phi_{\left(  \lambda_{0},...,\lambda
_{3}\right)  }^{\prime}\left(  x\right)  \Phi_{\left(  \lambda_{0},\lambda
_{1}\right)  }\left(  x\right)  -\Phi_{\left(  \lambda_{0},...\lambda
_{3}\right)  }\left(  x\right)  \Phi_{\left(  \lambda_{0},\lambda_{1}\right)
}^{\prime}\left(  x\right)  }{\Phi_{\left(  \lambda_{0},\lambda_{1}\right)
}\left(  x\right)  \Phi_{\left(  \lambda_{2},\lambda_{3}\right)  }\left(
x\right)  },\label{eqDefw}\\
\sigma\left(  x\right)   &  =\frac{\Phi_{\left(  \lambda_{0},...\lambda
_{3}\right)  }\left(  x\right)  }{\Phi_{\left(  \lambda_{0},\lambda
_{1}\right)  }\left(  x\right)  \Phi_{\left(  \lambda_{2},\lambda_{3}\right)
}\left(  x\right)  }. \label{eqDefr}%
\end{align}
Let us emphasize that a description of the matrix entries $\rho\left(
x\right)  $ and $\sigma\left(  x\right)  $ in terms of the fundamental
function $\Phi_{\Lambda_{N}}$ is often more useful than an explicit formula in
terms of the exponential functions $e^{\lambda_{0}x},...,e^{\lambda_{N}x}$. In
a similar way the matrix $Q$ can be defined.

In Section 4 we shall provide a simple sufficient criterion which guarantees
that the matrix $R$ is strictly diagonally dominant. In the case that $\left(
\lambda_{0},\lambda_{1}\right)  \in\mathbb{C}^{2}$ and $\left(  \lambda
_{2},\lambda_{3}\right)  \in\mathbb{C}^{2}$ are conjugation invariant we shall
show that for any $\varepsilon>0$ there exists $\delta>0$ such that the
estimate
\[
\left\vert R_{j,j-1}\right\vert +\left\vert R_{j,j+1}\right\vert \leq\left(
\frac{1}{2}+\varepsilon\right)  \left\vert R_{jj}\right\vert
\]
holds for all $t_{1}<....<t_{n}$ such that $t_{j+1}-t_{j}<\delta$ for
$j=1,...,n-1.$ Thus we obtain strict diagonal dominance of the matrix $R$ if
we make the step size $t_{j+1}-t_{j}$ sufficiently small.

In Section 5 we will show that for special vectors $\Lambda_{3}$ stronger
results hold: for $\Lambda_{3}=\left(  -a,a,-b,b\right)  \in\mathbb{R}^{4}$
with $a,b\geq0$ we show the estimate
\[
R_{j,j-1}+R_{j,j+1}\leq\frac{1}{2}R_{jj}.
\]
For the example in (\ref{eqExx2a}) we shall discuss in detail the question of
diagonal dominance of the matrix $R.$ Examples will show that an estimate of
the form
\begin{equation}
R_{j,j-1}+R_{j,j+1}\leq C_{a,b}R_{jj}\text{ and }0<C_{a,b}<1
\label{eqRniceestimate}%
\end{equation}
does not hold for general real frequencies $\lambda_{0},...,\lambda_{3}.$

In Section 6 we show that the entries in (\ref{eqRjj}) and (\ref{eqRjj1}) are
positive numbers provided that $\lambda_{0},...,\lambda_{3}$ are real numbers.
In combination with the fact that a strictly diagonally dominant matrix with
non-negative entries is positive definite, one deduce from these results
sufficient criteria for the positive-definiteness of the matrix $R.$

It is well known that in the polynomial case the matrix R is symmetric. It is
interesting to provide necessary and sufficient conditions that $R$ is
symmetric. In Section 7 we will show that the matrix $R$ is symmetric if and
only if the vector $\left(  \lambda_{0},...,\lambda_{3}\right)  $ is
symmetric, and this is equivalent to say that the function $\sigma\left(
x\right)  $ defined in (\ref{eqDefr}) is odd. On the other hand, we shall show
that the function $\rho\left(  x\right)  $ defined in (\ref{eqDefw}) is odd if
and only if $\lambda_{0}+\lambda_{1}=\lambda_{2}+\lambda_{3}.$

$\ $

\section{The fundamental function}

We assume that we are given a vector $\Lambda\in\mathbb{C}^{^{N+1}},$ with the
first coordinate having index $0$, namely
\[
\Lambda=\Lambda_{_{N}}=\left(  \lambda_{_{0}},...,\lambda_{_{N}}\right)  .
\]
In the case of \emph{pairwise different} $\lambda_{j},j=0,\ldots,N,$ the space
$E_{\left(  \lambda_{0},\ldots,\lambda_{N}\right)  }$ is the linear span
generated by the functions $e^{\lambda_{0}x},e^{\lambda_{1}x},\ldots
,e^{\lambda_{N}x}.$ In the case when $\lambda_{j}$ occurs $m_{j}$ times in
$\Lambda_{N}=\left(  \lambda_{0},\ldots,\lambda_{N}\right)  ,$ a basis of the
space $E_{\left(  \lambda_{0},\ldots,\lambda_{N}\right)  }$ is given by the
linearly independent functions
\begin{equation}
x^{s}e^{\lambda_{j}x}\qquad\text{for }s=0,1,\ldots,m_{j}-1.
\end{equation}
In the case that $\lambda_{0}=\cdots=\lambda_{N}$ $=0,$ the space $E_{\left(
\lambda_{0},\ldots,\lambda_{N}\right)  }$ is just the space of all polynomials
of degree at most $N,$ and we shall refer to this as the \emph{polynomial
case}.

We say that the space $E_{\left(  \lambda_{_{0}},...,\lambda_{_{N}}\right)  }$
is \emph{closed under complex conjugation}, if for $f\in$ $E_{\left(
\lambda_{_{0}},...,\lambda_{_{N}}\right)  }$ the complex conjugate function
$\overline{f}$ is again in $E_{\left(  \lambda_{0},...,\lambda_{N}\right)  }.$
It is easy to see that for complex numbers $\lambda_{_{0}},...,\lambda_{_{N}}$
the space $E_{\left(  \lambda_{0},...,\lambda_{N}\right)  }$ is closed under
complex conjugation if and only if there exists a permutation $p$ of the
indices $\left\{  0,...,N\right\}  $ such that $\overline{\lambda_{_{j}}%
}=\lambda_{_{p\left(  j\right)  }}$ for $j=0,...,N.$ In other words,
$E_{\left(  \lambda_{_{0}},...,\lambda_{_{N}}\right)  }$ is closed under
complex conjugation if and only if the vector $\Lambda_{_{N}}=\left(
\lambda_{_{0}},...,\lambda_{_{N}}\right)  $ is equal up to reordering to the
conjugate vector $\overline{\Lambda_{_{N}}}$.

We say that the vector $\Lambda_{N}$ is \emph{symmetric} if there exists a
permutation $p$ of the set $\left\{  0,...,N\right\}  $ such that
$-\lambda_{_{j}}=\lambda_{_{p(j)}}$ for $j=0,...,N,$ or, symbolically,
$-\Lambda_{N}=\Lambda_{N}.$ If $\Lambda_{N}$ is symmetric then
\[
\Phi_{\Lambda_{N}}(-x)=(-1)^{N}\Phi_{\Lambda_{N}}(x).
\]
The formula shows that for odd $N$ the complex-valued function $\Phi
_{\Lambda_{N}}$ is odd, and for even $N$ the function $\Phi_{\Lambda_{N}}$ is even.

Here are two examples for the fundamental function.

\begin{example}
In (\ref{eqExx1a}) we have the frequencies $\Lambda_{3}=(0,\gamma
,i\omega,-i\omega)$ with $\gamma\neq0$ and $\omega\neq0.$ Then the fundamental
function is given by
\[
\Phi_{\Lambda_{3}}\left(  t\right)  =\frac{\gamma^{2}\left(  \cos\omega
t-1\right)  -\omega\gamma\sin\omega t+\omega^{2}\left(  e^{\gamma t}-1\right)
}{\omega^{2}\gamma\left(  \omega^{2}+\gamma^{2}\right)  }%
\]

\end{example}

\begin{example}
In (\ref{eqExx2a}) we have the frequencies $\Lambda_{_{3}}=\left(
0,0,i\omega,-i\omega\right)  $ and the fundamental function is given by%
\[
\Phi_{\Lambda_{_{3}}}\left(  t\right)  =\frac{\omega t-\sin\omega t}%
{\omega^{3}}.
\]

\end{example}

An explicit formula for $\Phi_{\Lambda_{_{N}}}$ is%

\begin{equation}
\Phi_{\Lambda_{_{N}}}\left(  x\right)  =\frac{1}{2\pi i}\int_{\Gamma_{r}}%
\frac{e^{xz}}{\left(  z-\lambda_{_{0}}\right)  \cdots\left(  z-\lambda_{_{N}%
}\right)  }dz, \label{defPhi}%
\end{equation}
where $\Gamma_{r}$ is the path in the complex plane defined by $\Gamma
_{r}\left(  t\right)  =re^{it}$, $t\in\left[  0,2\pi\right]  $, surrounding
all the complex numbers $\lambda_{_{0}},\ldots,\lambda_{_{N}}$ (see
Proposition 13.12 in \cite{okbook}). \newline

The fundamental function can be seen as analogue of the power function $x^{N}$
in the space $E_{\left(  \lambda_{_{0}},\ldots,\lambda_{_{N}}\right)  }$: In
the polynomial case, i.e., $\lambda_{_{0}}=\ldots=\lambda_{_{N}}=0$, the
fundamental function is
\begin{equation}
\Phi_{\text{pol,}N}\left(  x\right)  =\frac{1}{N!}x^{N}. \label{Fundpol}%
\end{equation}

The next three properties follow directly from (\ref{defPhi}):
\begin{equation}
(\frac{d}{dx}-\lambda_{_{N+1}})\Phi_{\left(  \lambda_{_{0}},\ldots
,\lambda_{_{N+1}}\right)  }(x)=\Phi_{\left(  \lambda_{_{0}},\ldots
,\lambda_{_{N}}\right)  }(x), \label{rec0}%
\end{equation}
\begin{equation}
\Phi_{\Lambda_{_{N}}}\left(  -x\right)  =\left(  -1\right)  ^{^{N}}%
\Phi_{-\Lambda_{N}}\left(  x\right)  \label{neg_lambdas}%
\end{equation}
where $-\Lambda_{_{N}}$ is the vector $\left(  -\lambda_{_{0}},...,-\lambda
_{_{N}}\right)  $ and
\begin{equation}
\Phi_{\overline{\Lambda_{N}}}\left(  x\right)  =\overline{\Phi_{\Lambda_{_{N}%
}}\left(  x\right)  } \label{conjug_lambdas}%
\end{equation}
where $\overline{\Lambda_{_{N}}}$ is the vector $\left(  \overline
{\lambda_{_{0}}},...,\overline{\lambda_{_{N}}}\right)  $.

From (\ref{conjug_lambdas}) we directly obtain

\begin{proposition}
If $\Lambda_{_{N}} = \overline{\Lambda_{_{N}}}$ then $\Phi_{\Lambda_{_{N}}}
\left(  t \right)  $ is a real-valued function.
\end{proposition}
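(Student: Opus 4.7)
The plan is to combine the symmetry of the fundamental function under permutation of its frequencies with the conjugation identity (\ref{conjug_lambdas}). The hypothesis $\Lambda_{N}=\overline{\Lambda_{N}}$ should be read in the sense already introduced in the paragraph on closure under complex conjugation, i.e., the vector $\overline{\Lambda_{N}}=(\overline{\lambda_{0}},\ldots,\overline{\lambda_{N}})$ agrees with $\Lambda_{N}=(\lambda_{0},\ldots,\lambda_{N})$ up to a permutation of entries.

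First I would observe that the integral representation (\ref{defPhi}) for $\Phi_{\Lambda_{N}}$ depends on the $\lambda_{j}$ only through the polynomial $\prod_{j=0}^{N}(z-\lambda_{j})$, which is a symmetric function of its roots. Consequently, $\Phi_{\Lambda_{N}}$ is invariant under any permutation of the entries of $\Lambda_{N}$; in particular, if $\overline{\Lambda_{N}}$ is a reordering of $\Lambda_{N}$, then
\[
\Phi_{\overline{\Lambda_{N}}}(x)=\Phi_{\Lambda_{N}}(x).
\]

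Next I would invoke the conjugation identity (\ref{conjug_lambdas}), which yields $\Phi_{\overline{\Lambda_{N}}}(x)=\overline{\Phi_{\Lambda_{N}}(x)}$. Combining the two equalities gives
\[
\Phi_{\Lambda_{N}}(x)=\Phi_{\overline{\Lambda_{N}}}(x)=\overline{\Phi_{\Lambda_{N}}(x)},
\]
so $\Phi_{\Lambda_{N}}(x)$ equals its own complex conjugate and therefore is real-valued for every $x\in\mathbb{R}$.

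There is really no obstacle in this argument; the only subtlety is matching the informal notation $\Lambda_{N}=\overline{\Lambda_{N}}$ with its precise meaning (equality up to permutation), which is exactly the meaning used in the discussion immediately preceding the proposition. Once this is pinned down, the proof is a one-line combination of permutation invariance of (\ref{defPhi}) and the conjugation formula (\ref{conjug_lambdas}).
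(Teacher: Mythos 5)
Your proof is correct and follows the paper's route: the paper derives the proposition directly from the conjugation identity (\ref{conjug_lambdas}), with the permutation invariance of $\Phi_{\Lambda_{N}}$ (clear from the symmetric product in (\ref{defPhi})) used implicitly, which you have simply made explicit.
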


\begin{proposition}
If $\lambda_{_{0}},...,\lambda_{_{N}}$ are real numbers then $\Phi_{\left(
\lambda_{_{0}},...,\lambda_{_{N}}\right)  } \left(  t \right)  > 0$ for all
$t>0$.
\end{proposition}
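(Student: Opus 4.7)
The plan is to prove this by induction on $N$, using the recursion formula (\ref{rec0}) which lowers the order by one.

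For the base case $N = 0$, the Cauchy problem forces $\Phi_{(\lambda_0)}(x) = e^{\lambda_0 x}$, which is strictly positive on all of $\mathbb{R}$, in particular on $(0, \infty)$.

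For the inductive step, I assume that $\Phi_{(\lambda_0, \ldots, \lambda_N)}(t) > 0$ for all $t > 0$, and consider $u(x) := \Phi_{(\lambda_0, \ldots, \lambda_{N+1})}(x)$. By (\ref{rec0}) this function solves the first-order linear ODE
\[
u'(x) - \lambda_{N+1} u(x) = \Phi_{(\lambda_0, \ldots, \lambda_N)}(x),
\]
with initial condition $u(0) = 0$ (coming from the defining Cauchy problem, since $N+1 \geq 1$). The integrating factor $e^{-\lambda_{N+1} x}$ yields the explicit representation
\[
u(x) = e^{\lambda_{N+1} x} \int_0^x e^{-\lambda_{N+1} s} \, \Phi_{(\lambda_0, \ldots, \lambda_N)}(s) \, ds,
\]
and this is valid because $\lambda_{N+1}$ is real so that $e^{-\lambda_{N+1} s}$ is a positive real-valued function. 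For $s \in (0, x)$ the integrand is a product of strictly positive real numbers (by the induction hypothesis and the positivity of the exponential), so the integral is strictly positive; multiplied by the positive factor $e^{\lambda_{N+1} x}$ this gives $u(x) > 0$ for $x > 0$, completing the induction.

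There is essentially no obstacle: the only thing to double-check is that the inductive step uses the reality of $\lambda_{N+1}$ in an essential way (so that $e^{-\lambda_{N+1} s}$ is real and positive, and one can speak of the sign of the integrand), and that the initial condition $u(0) = 0$ is available, which holds because the Cauchy data for $\Phi_{\Lambda_{N+1}}$ prescribe vanishing of all derivatives up to order $N$ at the origin. One could alternatively invoke Proposition 2 to ensure $\Phi_{(\lambda_0, \ldots, \lambda_N)}$ is real-valued and then argue the sign; the induction already handles both in one stroke.
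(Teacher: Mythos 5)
Your proof is correct, but it follows a genuinely different route from the paper. The paper's argument is a zero-counting one: it invokes the well-known fact that $E_{(\lambda_0,\ldots,\lambda_N)}$ with real exponents is an extended Chebyshev system, so $\Phi_{(\lambda_0,\ldots,\lambda_N)}$, having a zero of multiplicity $N$ at the origin, cannot vanish anywhere else; positivity for $t>0$ then follows from the normalization $\Phi^{(N)}(0)=1$, $\Phi^{(j)}(0)=0$ for $j<N$, which forces $\Phi(t)\sim t^N/N!>0$ just to the right of $0$. You instead induct on $N$ using the recursion (\ref{rec0}), solving the first-order equation $u'-\lambda_{N+1}u=\Phi_{(\lambda_0,\ldots,\lambda_N)}$ with $u(0)=0$ by an integrating factor to get the representation $u(x)=e^{\lambda_{N+1}x}\int_0^x e^{-\lambda_{N+1}s}\,\Phi_{(\lambda_0,\ldots,\lambda_N)}(s)\,ds$, whose integrand is nonnegative and strictly positive on $(0,x)$; the base case $\Phi_{(\lambda_0)}=e^{\lambda_0 x}$ is immediate. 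Both arguments are sound. The paper's proof is shorter but leans on the Chebyshev-system fact as a black box (and, as a byproduct, gives non-vanishing of $\Phi$ for all $t\neq 0$, not just $t>0$); your induction is more elementary and self-contained, yields an explicit positive integral representation, and uses the reality of the $\lambda_j$ only through positivity of the exponential weights — essentially reproving, in the special case needed, the mechanism that underlies the Chebyshev property itself.
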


\begin{proof}
It is well known that $E_{\left(  \lambda_{_{0}},...,\lambda_{_{N}} \right)
}$ is an extended Chebyshev system over the real line, so each function $f\in
E_{\left(  \lambda_{_{0}},...,\lambda_{_{N}} \right)  }$ has at most $N$ zeros
(including multiplicities). Since $\Phi_{\left(  \lambda_{_{0}},...,\lambda
_{_{N}}\right)  } \left(  t \right)  $ has at most $N$ zeros we infer that
$\Phi_{\left(  \lambda_{_{0}},...,\lambda_{_{N}} \right)  } \left(  t \right)
\neq0$ for all $t \neq0.$ The result follows since $\Phi_{\left(
\lambda_{_{0}},...,\lambda_{_{N}} \right)  }^{\left(  N \right)  } \left(  0
\right)  = 1$ and $\Phi_{\left(  \lambda_{_{0}},...,\lambda_{_{N}} \right)
}^{\left(  j \right)  } \left(  0 \right)  = 0$ for all $j=0,...,N-1$.
\end{proof}

In the following we describe the fundamental function $\Phi_{\left(
\lambda_{_{0}},\lambda_{_{1}}\right)  }$: if $\lambda_{_{0}}=\lambda_{_{1}%
}=0,$ or if $\lambda_{_{0}}=\lambda_{_{1}}\neq0$ then
\[
\Phi_{(0,0)}(x)=x=e^{(\lambda_{_{0}}+\lambda_{_{1}})x/2}x\text{ and }%
\Phi_{(\lambda_{_{0}},\lambda_{_{1}})}(x)=e^{(\lambda_{_{0}}+\lambda_{_{1}%
})x/2}x.
\]
In the case $\lambda_{_{0}}\neq\lambda_{_{1}}$ we have
\begin{equation}
\Phi_{(\lambda_{_{0}},\lambda_{_{1}})}\left(  x\right)  =\frac{e^{\lambda
_{1}x}-e^{\lambda_{0}x}}{\lambda_{_{1}}-\lambda_{_{0}}}=e^{(\lambda_{_{0}%
}+\lambda_{_{1}})x/2}\frac{e^{(\lambda_{_{1}}-\lambda_{_{0}})x/2}%
-e^{-(\lambda_{_{1}}-\lambda_{_{0}})x/2}}{\lambda_{_{1}}-\lambda_{_{0}}}.
\label{eqdegPhitwo}%
\end{equation}
In any of these cases there exists an odd function $\psi_{(\lambda_{_{0}%
},\lambda_{_{1}})}(x)$ such that
\begin{equation}
\Phi_{(\lambda_{_{0}},\lambda_{_{1}})}(x)=e^{^{\left(  \lambda_{_{0}}%
+\lambda_{_{1}}\hspace*{-0.05cm}\right)  x/2}}\psi_{(\lambda_{_{0}}%
,\lambda_{_{1}})}(x). \label{eqPsiEven}%
\end{equation}
Hence for all complex numbers $\lambda_{_{0}},\lambda_{_{1}}$ we have
established the following useful formula
\begin{equation}
\Phi_{(\lambda_{_{0}},\lambda_{_{1}})}(-x)=-e^{^{\left(  \lambda_{_{0}%
}+\lambda_{_{1}}\hspace*{-0.05cm}\right)  \hspace*{0.05cm}x}}\Phi
_{(\lambda_{_{0}},\lambda_{_{1}})}(x). \label{eqsym}%
\end{equation}
We deduce from this the following result which will be needed later:

\begin{proposition}
\label{PropSym}Let $\lambda_{_{0}},\lambda_{_{1}},\lambda_{_{2}}$ and
$\lambda_{_{3}}$ be complex numbers. Then
\begin{equation}
\Phi_{\left(  \lambda_{_{0}},\lambda_{_{1}}\right)  }(-x)\Phi_{\left(
\lambda_{_{2}},\lambda_{_{3}}\right)  }(-x)=e^{-(\lambda_{_{0}}+\lambda_{_{1}%
}+\lambda_{_{2}}+\lambda_{_{3}})x}\Phi_{\left(  \lambda_{_{0}},\lambda_{_{1}%
}\right)  }(x)\Phi_{\left(  \lambda_{_{2}},\lambda_{_{3}}\right)  }(x).
\label{eqdouble}%
\end{equation}
The function $\Phi_{\left(  \lambda_{_{0}},\lambda_{_{1}}\right)  }\left(
x\right)  \Phi_{\left(  \lambda_{_{2}},\lambda_{_{3}}\right)  }\left(
x\right)  $ is even if and only if $\lambda_{_{0}}+\cdots+\lambda_{_{3}}=0.$
\end{proposition}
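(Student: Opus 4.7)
The plan is to deduce both claims directly from the symmetry identity (\ref{eqsym}). First I would apply (\ref{eqsym}) with the pair $(\lambda_{0},\lambda_{1})$ and then with $(\lambda_{2},\lambda_{3})$, obtaining for each $\Phi_{(\lambda_{0},\lambda_{1})}(-x)$ and $\Phi_{(\lambda_{2},\lambda_{3})}(-x)$ a representation of the form $-e^{\cdots}\,\Phi(x)$. Multiplying the two identities, the two minus signs cancel and the two exponentials combine into a single exponential factor whose exponent is $-(\lambda_{0}+\lambda_{1}+\lambda_{2}+\lambda_{3})x$, which is exactly (\ref{eqdouble}). This first step is a direct calculation with no real content beyond (\ref{eqsym}).

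For the second claim I would set $S=\lambda_{0}+\lambda_{1}+\lambda_{2}+\lambda_{3}$ and $h(x)=\Phi_{(\lambda_{0},\lambda_{1})}(x)\,\Phi_{(\lambda_{2},\lambda_{3})}(x)$. Evenness of $h$ means $h(-x)=h(x)$ for every $x\in\mathbb{R}$, which by (\ref{eqdouble}) is equivalent to $h(x)\bigl(1-e^{-Sx}\bigr)=0$ for every $x$. The implication $S=0\Rightarrow h$ even is then immediate. For the reverse direction I would invoke the Cauchy initial conditions $\Phi_{(\mu_{0},\mu_{1})}(0)=0$ and $\Phi_{(\mu_{0},\mu_{1})}'(0)=1$ applied to both pairs, which give $h(x)=x^{2}+O(x^{3})$ near the origin; in particular $h$ is nonzero on some punctured neighbourhood of $0$, so the factor $1-e^{-Sx}$ must vanish on a nonempty open set, forcing $S=0$.

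I do not expect any substantive obstacle: the identity (\ref{eqdouble}) follows mechanically from (\ref{eqsym}) by multiplication, and the evenness characterisation uses only the elementary fact that the entire function $1-e^{-Sx}$ is not identically zero unless $S=0$. The only small subtlety to record is the non-triviality of $h$ near the origin, which is guaranteed by the Cauchy data defining $\Phi_{\Lambda_{N}}$.
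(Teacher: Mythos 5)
Your proposal is correct and follows essentially the same route as the paper: (\ref{eqdouble}) by applying (\ref{eqsym}) to each pair and multiplying, and the evenness characterisation directly from (\ref{eqdouble}). Your treatment of the converse is in fact slightly more careful than the paper's, since you justify dividing by $\Phi_{(\lambda_{0},\lambda_{1})}(x)\Phi_{(\lambda_{2},\lambda_{3})}(x)$ via its behaviour $x^{2}+O(x^{3})$ near $0$ before concluding $e^{-Sx}=1$ and hence $S=0$.
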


\begin{proof}
Applying (\ref{eqsym}) to $\left(  \lambda_{_{0}},\lambda_{_{1}}\right)  $ and
$\left(  \lambda_{_{2}},\lambda_{_{3}}\right)  $ gives (\ref{eqdouble}). If
$\lambda_{_{0}}+\cdots+\lambda_{_{3}}=0$ then by (\ref{eqdouble}) the
function$\Phi_{\left(  \lambda_{_{0}},\lambda_{_{1}}\right)  }\left(
x\right)  \Phi_{\left(  \lambda_{_{2}},\lambda_{_{3}}\right)  }\left(
x\right)  $ is even. Conversely, if $\Phi_{\left(  \lambda_{_{0}}%
,\lambda_{_{1}}\right)  }\left(  x\right)  \Phi_{\left(  \lambda_{_{2}%
},\lambda_{_{3}}\right)  }\left(  x\right)  $ is even then by (\ref{eqdouble})
we have $e^{-\left(  \lambda_{0}+\lambda_{1}+\lambda_{2}+\lambda_{3}\right)
x}=1$ for all $x$ and this implies $\lambda_{0}+\cdots+\lambda_{3}=0.$
\end{proof}

\begin{proposition}
\label{PropImaginary}Let $\lambda_{0},\lambda_{1}$ be complex numbers and
define
\[
\delta=\frac{2\pi}{\left\vert \operatorname{Im}\left(  \lambda_{1}-\lambda
_{0}\right)  \right\vert }.
\]
Then $\Phi_{\left(  \lambda_{_{0}},\lambda_{_{1}}\right)  }\left(  t\right)
\neq0$ for all $t\in\left(  0,\delta\right)  .$
\end{proposition}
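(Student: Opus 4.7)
The plan is to reduce the claim to the vanishing of a single $\sinh$ expression and then analyze real and imaginary parts of its argument.

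First I would dispose of the degenerate case $\lambda_0=\lambda_1$. Here formula (\ref{eqdegPhitwo}) (or the preceding display) gives $\Phi_{(\lambda_0,\lambda_1)}(x)=e^{(\lambda_0+\lambda_1)x/2}x$, which is nonzero for $x\neq 0$; and since $\operatorname{Im}(\lambda_1-\lambda_0)=0$, the bound $\delta=\infty$ is vacuous so there is nothing further to prove.

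Next, assume $\lambda_0\neq\lambda_1$ and set $\mu=(\lambda_1-\lambda_0)/2$. Formula (\ref{eqdegPhitwo}) rewrites as
\[
\Phi_{(\lambda_0,\lambda_1)}(x)=e^{(\lambda_0+\lambda_1)x/2}\,\frac{\sinh(\mu x)}{\mu}.
\]
The exponential factor never vanishes, so $\Phi_{(\lambda_0,\lambda_1)}(x)=0$ if and only if $\sinh(\mu x)=0$, i.e.\ $\mu x\in i\pi\mathbb{Z}$.

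Now I would write $\mu=\alpha+i\beta$ with $\alpha,\beta\in\mathbb{R}$, and separate real and imaginary parts of the condition $\mu x=ik\pi$ for a real $x$. This requires $\alpha x=0$ and $\beta x=k\pi$. If $\alpha\neq 0$, the first forces $x=0$, so $\Phi$ has no zero on $(0,\infty)$, which is more than enough. If $\alpha=0$ (so $\mu$ is purely imaginary), the smallest positive $x$ satisfying $\beta x\in\pi\mathbb{Z}$ is $x=\pi/|\beta|$; substituting $\beta=\operatorname{Im}(\lambda_1-\lambda_0)/2$ yields exactly $x=2\pi/|\operatorname{Im}(\lambda_1-\lambda_0)|=\delta$. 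In every case, $\Phi_{(\lambda_0,\lambda_1)}(t)\neq 0$ for $t\in(0,\delta)$.

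There is no real obstacle here; the only thing to be careful about is the bookkeeping between $\mu$ and $\lambda_1-\lambda_0$ (the factor of $2$) and recognizing that the case $\alpha\neq 0$ in fact produces no positive zeros at all, so the bound $\delta$ is only tight when $\lambda_1-\lambda_0$ is purely imaginary.
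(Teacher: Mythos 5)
Your proof is correct and follows essentially the same route as the paper: the same case split on $\lambda_0=\lambda_1$ versus $\lambda_0\neq\lambda_1$, and in the latter case the same reduction of zeros of $\Phi_{(\lambda_0,\lambda_1)}$ to the condition $e^{(\lambda_1-\lambda_0)x}=1$ (your $\sinh(\mu x)=0$ with $\mu=(\lambda_1-\lambda_0)/2$ is just a repackaging of this), settled by comparing real and imaginary parts. Your write-up merely makes explicit the real/imaginary bookkeeping that the paper's last sentence leaves to the reader.
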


\begin{proof}
Assume at first that $\lambda_{1}=\lambda_{0}.$ Then $\delta=\infty$ (by
interpreting $\frac{2\pi}{0}=\infty)$. On the other hand, $\Phi_{(\lambda
_{_{0}},\lambda_{_{1}})}(x)=e^{(\lambda_{_{0}}+\lambda_{_{1}})x/2}x\neq0$ for
all $x\neq0,$ so the statement is true in this case. In the case $\lambda
_{1}\neq\lambda_{0}$ we see that $\Phi_{(\lambda_{_{0}},\lambda_{_{1}})}(x)=0$
implies that $e^{\left(  \lambda_{1}-\lambda_{0}\right)  x}=1$. On the other
hand, we see that for any positive $x<\delta=\frac{2\pi}{\left\vert
\operatorname{Im}\left(  \lambda_{1}-\lambda_{0}\right)  \right\vert }$ we
have $e^{\left(  \lambda_{1}-\lambda_{0}\right)  x}\neq1.$
\end{proof}

\section{The matrix $R$}

For complex numbers $\lambda_{_{0}},...,\lambda_{_{3}}$ we define the linear
differential operators
\begin{align*}
L_{_{1}}  &  =\left(  \frac{d}{dt}-\lambda_{_{0}}\right)  \left(  \frac{d}%
{dt}-\lambda_{_{1}}\right)  ,\quad\quad L_{_{2}}=\left(  \frac{d}{dt}%
-\lambda_{_{2}}\right)  \left(  \frac{d}{dt}-\lambda_{_{3}}\right) \\
L_{3}  &  =L_{1}L_{2}%
\end{align*}

Now we define solutions $A_{_{j}}^{^{\left[  2 \right]  }}( t ) $ and
$B_{_{j}}^{^{\left[  2 \right]  }}( t ) $ of the equation $L_{_{1}}u=0$ in
$[t_{_{j}}, t_{_{j+1}}]$ by%

\begin{equation}
A_{_{j}}^{^{\left[  2\right]  }}(t)=\frac{\Phi_{\left(  \lambda_{_{0}}%
,\lambda_{_{1}}\right)  }(t-t_{_{j+1}})}{\Phi_{\left(  \lambda_{_{0}}%
,\lambda_{_{1}}\right)  }(t_{_{j}}-t_{_{j+1}})}\text{ and }B_{_{j}}^{^{\left[
2\right]  }}(t)=\frac{\Phi_{\left(  \lambda_{_{0}},\lambda_{_{1}}\right)
}(t-t_{_{j}})}{\Phi_{\left(  \lambda_{_{0}},\lambda_{_{1}}\right)  }%
(t_{_{j+1}}-t_{_{j}})}. \label{eqdefAB}%
\end{equation}
where we use our general Condition \ref{ConditionSTAR} which implies that for
the nodes $t_{_{1}}<...<t_{_{n}}$ in the definition of an $L$-spline we have
\begin{equation}
\Phi_{\left(  \lambda_{_{0}},\lambda_{_{1}}\right)  }(t_{_{j+1}}-t_{_{j}}%
)\neq0\text{ and }\Phi_{\left(  \lambda_{_{2}},\lambda_{_{3}}\right)
}(t_{_{j+1}}-t_{_{j}})\neq0 \label{PhiCond}%
\end{equation}
for $j=1,...,n-1.$ We see that
\[
A_{_{j}}^{^{\left[  2\right]  }}(t_{_{j}})=1,\quad A_{_{j}}^{^{\left[
2\right]  }}(t_{_{j+1}})=0,\text{ and }B_{_{j}}^{^{\left[  2\right]  }%
}(t_{_{j}})=0,\quad B_{_{j}}^{^{\left[  2\right]  }}(t_{_{j+1}})=1.
\]

\begin{proposition}
\label{PropA1}Let $\lambda_{_{0}},...,\lambda_{_{3}}$ be complex numbers and
$t_{_{1}}<....<t_{_{n}}$ such that (\ref{PhiCond}) holds. Then there exist
unique function $A_{_{j}}^{^{\left[  1 \right]  }}( \cdot) $ and unique
function $B_{_{j}}^{^{\left[  1 \right]  }}( \cdot) $ in $E_{( \lambda_{_{0}%
},...,\lambda_{_{3}} )}$ such that $L_{_{2}} \left(  L_{_{1}} \left(  A_{_{j}%
}^{^{\left[  1 \right]  }}( t ) \right)  \right)  \equiv L_{_{2}} \left(
L_{_{1}} \left(  B_{_{j}}^{^{\left[  1 \right]  }}( t ) \right)  \right)
\equiv0$ in $[t_{_{j}}, t_{_{j+1}}]$ and%

\begin{align}
A_{_{j}}^{^{\left[  1 \right]  }}( t_{_{j}} ) \hspace*{-0.1cm}  &  =
\hspace*{-0.1cm} A_{_{j}}^{^{\left[  1 \right]  }} ( t_{_{j+1}} ) = 0 \text{
and }L_{_{1}}A_{_{j}}^{^{\left[  1 \right]  }} ( t_{_{j}} ) = 1; \; L_{_{1}%
}A_{_{j}}^{^{\left[  1\right]  }}( t_{_{j+1}} ) = 0,\label{A1properties}\\
B_{_{j}}^{^{\left[  1 \right]  }}( t_{_{j}} ) \hspace*{-0.1cm}  &  =
\hspace*{-0.1cm} B_{_{j}}^{^{\left[  1 \right]  }} ( t_{_{j+1}} ) = 0\text{
and }L_{_{1}}B_{_{j}}^{^{\left[  1\right]  }} ( t_{_{j}} ) = 0; \; L_{_{1}%
}B_{_{j}}^{^{\left[  1 \right]  }}( t_{_{j+1}} ) = 1. \label{B1properties}%
\end{align}

\end{proposition}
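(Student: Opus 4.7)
The plan is to recognize this as a linear-algebra problem on the four-dimensional space $E_{(\lambda_0,\ldots,\lambda_3)}$. Since the ambient space is four-dimensional and we are imposing exactly four scalar conditions (two values of the function at the endpoints, plus two values of $L_1$ applied to it at the endpoints), existence and uniqueness of $A_j^{[1]}$ and $B_j^{[1]}$ reduce to showing that the linear map
\begin{equation*}
T : E_{(\lambda_0,\ldots,\lambda_3)} \longrightarrow \mathbb{C}^4, \qquad T(f) = \bigl(f(t_j),\, f(t_{j+1}),\, L_1 f(t_j),\, L_1 f(t_{j+1})\bigr)
\end{equation*}
is injective; the two desired functions are then recovered as $T^{-1}(0,0,1,0)$ and $T^{-1}(0,0,0,1)$. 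Note also that the requirement $L_2 L_1 f \equiv 0$ on $[t_j,t_{j+1}]$ is automatic, since $L_3 = L_1 L_2 = L_2 L_1$ and $E_{(\lambda_0,\ldots,\lambda_3)} = \ker L_3$.

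To prove injectivity, I would suppose $T(f)=0$ and set $g := L_1 f$. Because $L_2 g = L_2 L_1 f = L_3 f = 0$, the function $g$ lies in the two-dimensional space $E_{(\lambda_2,\lambda_3)}$, and by hypothesis $g(t_j) = g(t_{j+1}) = 0$. The key lemma here is the following: if $g \in E_{(\mu_0,\mu_1)}$ vanishes at two points $s_0 < s_1$ with $s_1 - s_0 < 2\pi/|\operatorname{Im}(\mu_1-\mu_0)|$, then $g \equiv 0$. This follows from the translation-invariance of $E_{(\mu_0,\mu_1)}$: the translate $\tilde g(t) = g(t+s_0)$ lies in $E_{(\mu_0,\mu_1)}$ and vanishes at $0$, so it must be a scalar multiple of $\Phi_{(\mu_0,\mu_1)}$ (the subspace of elements vanishing at $0$ is one-dimensional in each of the cases $\mu_0 = \mu_1$ and $\mu_0 \ne \mu_1$, as one checks directly). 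But then $\tilde g(s_1-s_0) = c\,\Phi_{(\mu_0,\mu_1)}(s_1-s_0)$, and Proposition \ref{PropImaginary} forces $c=0$.

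Applying this lemma with $(\mu_0,\mu_1) = (\lambda_2,\lambda_3)$, which is permissible by Condition \ref{ConditionSTAR} and the step-size bound built into it, we conclude $g = L_1 f \equiv 0$ on $[t_j,t_{j+1}]$. Since $L_1 f$ is an entire analytic function, it vanishes identically on $\mathbb{R}$, which means $f \in E_{(\lambda_0,\lambda_1)}$. Now I apply the same lemma once more, this time with $(\mu_0,\mu_1) = (\lambda_0,\lambda_1)$ and using $f(t_j) = f(t_{j+1}) = 0$, to conclude $f \equiv 0$. This establishes the injectivity, hence the bijectivity of $T$, hence the proposition.

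The only delicate point is the auxiliary lemma about zeros of functions in $E_{(\mu_0,\mu_1)}$; all the work is really encoded in Condition \ref{ConditionSTAR} and Proposition \ref{PropImaginary}, so the argument should be short and transparent.
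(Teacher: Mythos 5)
Your proposal is correct, but it proves the proposition by a genuinely different route than the paper. The paper's proof is constructive: it builds $A_j^{[1]}$ explicitly as
\[
A_{j}^{[1]}(t)=\frac{\Phi_{(\lambda_{0},\ldots,\lambda_{3})}(t-t_{j+1})}{\Phi_{(\lambda_{2},\lambda_{3})}(t_{j}-t_{j+1})}-A_{j}^{[2]}(t)\,\frac{\Phi_{(\lambda_{0},\ldots,\lambda_{3})}(t_{j}-t_{j+1})}{\Phi_{(\lambda_{2},\lambda_{3})}(t_{j}-t_{j+1})},
\]
(and analogously $B_j^{[1]}$), verifies the four boundary conditions using $L_1\Phi_{(\lambda_0,\ldots,\lambda_3)}=\Phi_{(\lambda_2,\lambda_3)}$, and then argues uniqueness via the observation that a function in $\ker L_i$ vanishing at both $t_j$ and $t_{j+1}$ must vanish identically, applied twice. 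Your argument instead establishes existence and uniqueness simultaneously by showing the evaluation map $T(f)=\bigl(f(t_j),f(t_{j+1}),L_1f(t_j),L_1f(t_{j+1})\bigr)$ is injective on the four-dimensional space $E_{(\lambda_0,\ldots,\lambda_3)}$, hence bijective onto $\mathbb{C}^4$; the heart of both proofs is the same two-point-vanishing lemma in a two-dimensional space $E_{(\mu_0,\mu_1)}$, which the paper treats as an "obvious observation" but which you prove carefully (translation to $0$, one-dimensionality of the subspace vanishing at $0$, nonvanishing of $\Phi_{(\mu_0,\mu_1)}$ at the step). That added care is a genuine plus, since the claim is false without the nondegeneracy hypothesis (think of $\sin$ vanishing at $0$ and $\pi$); note only that the lemma is most naturally stated with the hypothesis $\Phi_{(\mu_0,\mu_1)}(s_1-s_0)\neq 0$, which is exactly (\ref{PhiCond}) as assumed in the proposition, rather than via Condition \ref{ConditionSTAR}. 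What your abstract approach gives up is the explicit formulas for $A_j^{[1]}$ and $B_j^{[1]}$, which are not incidental: the paper uses them directly in Theorems \ref{r_jj} and \ref{ThmQ} to compute the entries of $R$ and $Q$ in terms of $\rho$ and $\sigma$, so in the context of the paper the constructive proof does double duty.
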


\begin{proof}
We define a function $A_{j}(\cdot)$ in $E_{(\lambda_{_{0}},...,\lambda_{_{3}%
})}$ by
\[
A_{_{j}}(t):=\Phi_{\left(  \lambda_{_{0}},...,\lambda_{_{3}}\right)
}(t-t_{_{j+1}})-A_{_{j}}^{^{\left[  2\right]  }}(t)\cdot\Phi_{\left(
\lambda_{_{0}},...,\lambda_{_{3}}\right)  }(t_{_{j}}-t_{_{j+1}}).
\]
Then it is straightforward to check $A_{_{j}}(t_{_{j+1}})=A_{_{j}}(t_{_{j}%
})=0$. Since $L_{_{1}}(A_{_{j}}(t))=\Phi_{(\lambda_{_{2}},\lambda_{_{3}}%
)}(t-t_{_{j+1}})$ we infer that $L_{_{1}}(A_{_{j}}(t_{_{j+1}}))=0$ Further
$L_{_{1}}(A_{_{j}}(t_{_{j}}))=\Phi_{(\lambda_{_{2}},\lambda_{_{3}})}(t_{_{j}%
}-t_{_{j+1}})$ and we see that
\[
A_{_{j}}^{^{\left[  1\right]  }}(t)=\frac{\Phi_{\left(  \lambda_{_{0}%
},...,\lambda_{_{3}}\right)  }(t-t_{_{j+1}})}{\Phi_{\left(  \lambda_{_{2}%
},\lambda_{_{3}}\right)  }(t_{_{j}}-t_{_{j+1}})}-A_{_{j}}^{^{\left[  2\right]
}}(t)\frac{\Phi_{\left(  \lambda_{_{0}},...,\lambda_{_{3}}\right)  }(t_{_{j}%
}-t_{_{j+1}})}{\Phi_{\left(  \lambda_{_{2}},\lambda_{_{3}}\right)  }(t_{_{j}%
}-t_{_{j+1}})}%
\]
is the desired solution. Similar arguments show that%
\[
B_{_{j}}^{^{\left[  1\right]  }}(t)=\frac{\Phi_{\left(  \lambda_{_{0}%
},...,\lambda_{_{3}}\right)  }(t-t_{_{j}})}{\Phi_{\left(  \lambda_{_{2}%
},\lambda_{_{3}}\right)  }(t_{_{j+1}}-t_{_{j}})}-B_{_{j}}^{^{\left[  2\right]
}}(t)\frac{\Phi_{\left(  \lambda_{_{0}},...,\lambda_{_{3}}\right)  }%
(t_{_{j+1}}-t_{_{j}})}{\Phi_{\left(  \lambda_{_{2}},\lambda_{_{3}}\right)
}(t_{_{j+1}}-t_{j})}.
\]
To see why uniqueness holds we refer to two obvious observations. We first
note that $L_{_{i}}v\equiv0$ in $[t_{_{j}},t_{_{j+1}}]$ together with
$v\left(  t_{_{j}}\right)  =v\left(  t_{_{j+1}}\right)  =0$ for $i=1,2$ imply
$v(t)\equiv0$ in $\left[  t_{_{j}},t_{_{j+1}}\right]  $. Applying this claim
twice consecutively we obtain that $L_{_{2}}\left(  L_{_{1}}w\right)  \equiv0$
in $\left[  t_{_{j}},t_{_{j+1}}\right]  $ together with $w\left(  t_{_{j}%
}\right)  =w\left(  t_{_{j+1}}\right)  =L_{_{1}}(w(t_{_{j}}))=L_{_{1}}\left(
w\left(  t_{_{j+1}}\right)  \right)  =0$ imply $w\left(  t\right)  \equiv0$ in
$\left[  t_{_{j}},t_{_{j+1}}\right]  $. Then uniqueness follows by contradiction.
\end{proof}

The proof of the following simple result is omitted:

\begin{proposition}
\label{Proppsij}Let $\lambda_{_{0}},...,\lambda_{_{3}}$ be complex numbers and
$t_{_{1}}<....<t_{_{n}}$ be such that (\ref{PhiCond}) holds. Let $\Lambda
_{3}=\left(  \lambda_{_{0}},...,\lambda_{_{3}}\right)  .$ Then any $L$-spline
$g(\cdot)$ with $L=L_{3}$ restricted to the interval $\left(  t_{_{j}%
},t_{_{j+1}}\right)  $ is equal to
\begin{equation}
\psi_{_{j}}(t):=L_{_{1}}g(t_{_{j}})A_{_{j}}^{^{\left[  1\right]  }}%
(t)+L_{_{1}}g(t_{_{j+1}})B_{_{j}}^{^{\left[  1\right]  }}(t)+g(t_{_{j}%
})A_{_{j}}^{^{\left[  2\right]  }}(t)+g(t_{_{j+1}})B_{_{j}}^{^{\left[
2\right]  }}(t). \label{eqpsij}%
\end{equation}

Using the notations in formula (\ref{gamma}) we may rewrite (\ref{eqpsij}) as
\begin{equation}
\psi_{_{j}}(t):=\gamma_{_{j}}A_{_{j}}^{^{\left[  1\right]  }}(t)+\gamma
_{_{j+1}}B_{_{j}}^{^{\left[  1\right]  }}(t)+g_{_{j}}A_{_{j}}^{^{\left[
2\right]  }}(t)+g_{_{j+1}}B_{_{j}}^{^{\left[  2\right]  }}(t). \label{eqpsij2}%
\end{equation}

\end{proposition}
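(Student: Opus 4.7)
The plan is to show that the difference $w := g - \psi_{j}$ vanishes identically on $[t_{j},t_{j+1}]$ by reducing to the uniqueness claim already established at the end of the proof of Proposition \ref{PropA1}. The four building blocks $A_{j}^{[1]}, B_{j}^{[1]}, A_{j}^{[2]}, B_{j}^{[2]}$ all lie in $E_{(\lambda_{0},\ldots,\lambda_{3})}$ (the first two by Proposition \ref{PropA1}, and the last two because solutions of $L_{1}u=0$ are in particular solutions of $L_{3}u=L_{2}L_{1}u=0$), so $\psi_{j}$ is a well-defined element of $E_{(\lambda_{0},\ldots,\lambda_{3})}$. Since $g$ restricted to $(t_{j},t_{j+1})$ is by assumption also in $E_{(\lambda_{0},\ldots,\lambda_{3})}$, the difference $w=g-\psi_{j}$ lies in $E_{(\lambda_{0},\ldots,\lambda_{3})}$ on that interval, that is, $L_{2}L_{1}w\equiv 0$ there.

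Next I would verify that $w$ satisfies the four boundary conditions needed to invoke the uniqueness observation from Proposition \ref{PropA1}. Using the boundary values $A_{j}^{[2]}(t_{j})=1, A_{j}^{[2]}(t_{j+1})=0, B_{j}^{[2]}(t_{j})=0, B_{j}^{[2]}(t_{j+1})=1$, together with $A_{j}^{[1]}(t_{j})=A_{j}^{[1]}(t_{j+1})=0$ and $B_{j}^{[1]}(t_{j})=B_{j}^{[1]}(t_{j+1})=0$ from (\ref{A1properties}) and (\ref{B1properties}), a direct evaluation of (\ref{eqpsij}) gives $\psi_{j}(t_{j})=g(t_{j})$ and $\psi_{j}(t_{j+1})=g(t_{j+1})$, hence $w(t_{j})=w(t_{j+1})=0$. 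Then I would apply $L_{1}$ to (\ref{eqpsij}); since $L_{1}A_{j}^{[2]}\equiv L_{1}B_{j}^{[2]}\equiv 0$ by construction, only the first two terms survive, and the boundary values $L_{1}A_{j}^{[1]}(t_{j})=1, L_{1}A_{j}^{[1]}(t_{j+1})=0, L_{1}B_{j}^{[1]}(t_{j})=0, L_{1}B_{j}^{[1]}(t_{j+1})=1$ from (\ref{A1properties})--(\ref{B1properties}) yield $L_{1}\psi_{j}(t_{j})=L_{1}g(t_{j})$ and $L_{1}\psi_{j}(t_{j+1})=L_{1}g(t_{j+1})$. Therefore $L_{1}w(t_{j})=L_{1}w(t_{j+1})=0$.

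At this point $w$ satisfies $L_{2}L_{1}w\equiv 0$ on $[t_{j},t_{j+1}]$ together with $w(t_{j})=w(t_{j+1})=L_{1}w(t_{j})=L_{1}w(t_{j+1})=0$, and the uniqueness observation at the end of the proof of Proposition \ref{PropA1} gives $w\equiv 0$ on $[t_{j},t_{j+1}]$, which is the claim. I do not foresee a real obstacle here: the argument is essentially bookkeeping of the boundary conditions built into the definitions of $A_{j}^{[1]}, B_{j}^{[1]}, A_{j}^{[2]}, B_{j}^{[2]}$. The only point that deserves attention is to notice that Condition \ref{ConditionSTAR} (through (\ref{PhiCond})) is already what makes those four functions well defined, so no additional hypothesis is needed.
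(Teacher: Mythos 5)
Your proof is correct. The paper in fact omits the proof of this proposition (calling it a "simple result"), and your argument — the bookkeeping of the interpolation conditions built into $A_{j}^{[1]}$, $B_{j}^{[1]}$, $A_{j}^{[2]}$, $B_{j}^{[2]}$, followed by applying to $w=g-\psi_{j}$ the uniqueness observation established at the end of the proof of Proposition \ref{PropA1} (which is exactly where hypothesis (\ref{PhiCond}) enters) — is precisely the intended argument.
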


The following is the main result of this section:

\begin{theorem}
\label{Thm1}Let $\lambda_{_{0}},...,\lambda_{_{3}}$ be complex numbers and
$t_{_{1}}<....<t_{_{n}}$ be such that (\ref{PhiCond}) holds. Let $g$ be a
natural spline for $\Lambda_{_{3}}=(\lambda_{_{0}},\lambda_{_{1}}%
,\lambda_{_{2}},\lambda_{_{3}})$ with data points $g_{_{1}},...,g_{_{n}}$ and
Then there exists an $n\times(n-2)$ matrix $Q$ and a $(n-2)\times(n-2)$ matrix
$R$ such that
\begin{equation}
Q^{T}\mathbf{g}=R\gamma, \label{QTgRgamma}%
\end{equation}
where $\gamma$ has been defined in (\ref{gamma}).
\end{theorem}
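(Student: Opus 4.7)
The plan is to extract one linear equation per interior knot from the $C^{1}$ compatibility of the pieces $\psi_{j}$ and read the entries of $R$ and $Q^{T}$ off directly. Since an $L$-spline for $L_{3}$ is of class $C^{N-1}=C^{2}$, at each interior knot $t_{j}$, $j=2,\ldots,n-1$, we need $g$, $g'$ and $g''$ to be continuous. The piecewise representation from Proposition \ref{Proppsij} already forces continuity of $g$ (both pieces equal $g_{j}$ at $t_{j}$) and of $L_{1}g$ (both pieces equal $\gamma_{j}$ at $t_{j}$); since $L_{1}g=g''-(\lambda_{0}+\lambda_{1})g'+\lambda_{0}\lambda_{1}g$, continuity of $g''$ follows automatically once $g'$ is continuous. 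Hence the $C^{2}$ gluing condition at each interior knot reduces to the single scalar equation
\[
\psi'_{j-1}(t_{j})=\psi'_{j}(t_{j}),\qquad j=2,\ldots,n-1,
\]
producing exactly $n-2$ linear constraints.

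Expanding each equation via the formula (\ref{eqpsij2}) for $\psi_{j}$ and differentiating gives a linear relation involving only $\gamma_{j-1},\gamma_{j},\gamma_{j+1}$ and $g_{j-1},g_{j},g_{j+1}$. Collecting the $\gamma$-coefficients on one side and the $g$-coefficients on the other produces the $j$-th row of the desired identity $Q^{T}\mathbf{g}=R\gamma$, with entries
\begin{align*}
R_{j,j-1} &=(A_{j-1}^{[1]})'(t_{j}),\ \ R_{j,j}=(B_{j-1}^{[1]})'(t_{j})-(A_{j}^{[1]})'(t_{j}),\ \ R_{j,j+1}=-(B_{j}^{[1]})'(t_{j}),\\
(Q^{T})_{j,j-1} &=-(A_{j-1}^{[2]})'(t_{j}),\ \ (Q^{T})_{j,j}=(A_{j}^{[2]})'(t_{j})-(B_{j-1}^{[2]})'(t_{j}),\ \ (Q^{T})_{j,j+1}=(B_{j}^{[2]})'(t_{j}),
\end{align*}
and all other entries in row $j$ are zero. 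In particular $R$ is tridiagonal. In the boundary rows $j=2$ and $j=n-1$ the would-be coefficients of $\gamma_{1}$ and $\gamma_{n}$ are suppressed by the natural-spline hypothesis $\gamma_{1}=\gamma_{n}=0$, so the resulting $R$ genuinely has size $(n-2)\times(n-2)$, while $Q^{T}$ retains its three-per-row nonzero pattern, now pinned to columns $1,2,3$ and $n-2,n-1,n$ in the first and last rows respectively.

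This already establishes the existence asserted by the theorem. The only genuinely delicate step, which I regard as the main technical point, is matching the entries above with the compact expressions (\ref{eqRjj})--(\ref{eqRjj1}) in terms of $\rho$ and $\sigma$. It reduces to inserting the closed forms for $A_{j}^{[i]},B_{j}^{[i]}$ produced in the proof of Proposition \ref{PropA1}, differentiating at the endpoints, and applying the initial-value normalizations $\Phi_{(\lambda_{0},\ldots,\lambda_{3})}^{(k)}(0)=0$ for $k=0,1,2$ with $\Phi_{(\lambda_{0},\ldots,\lambda_{3})}'''(0)=1$, together with $\Phi_{(\lambda_{0},\lambda_{1})}(0)=0$ and $\Phi_{(\lambda_{0},\lambda_{1})}'(0)=1$. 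The symmetry relation (\ref{eqsym}) then rewrites the quotients of fundamental functions evaluated at $-(t_{j+1}-t_{j})$ in the form prescribed by (\ref{eqDefw})--(\ref{eqDefr}), yielding (\ref{eqRjj}) and (\ref{eqRjj1}). The algebra is mechanical but needs to be carried out symmetrically at the two endpoints of each subinterval, and one must keep track of the sign changes introduced by (\ref{eqsym}).
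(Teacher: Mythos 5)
Your proposal is correct and follows essentially the same route as the paper: impose continuity of $g'$ at each interior knot on the piecewise representation $\psi_{j}$ from Proposition \ref{Proppsij}, separate the $\gamma$-terms from the $g$-terms, and read off the tridiagonal rows of $R$ and the rows of $Q^{T}$ (your entries coincide exactly with the paper's, and the suppression of $\gamma_{1},\gamma_{n}$ by the natural-spline condition is handled the same way). The further identification of the entries with $\rho$ and $\sigma$ that you flag as the delicate step is not part of this theorem but of the subsequent Theorem \ref{r_jj}, so your argument already fully proves the stated result.
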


\begin{proof}
Let $g$ be a natural spline for $L_{_{2}}L_{_{1}}$ with data points $g_{_{1}%
},...,g_{_{n}}$ (we remind that $L_{_{1}}g(t_{_{1}})=L_{_{1}}g(t_{_{n}})=0$).
Then for $t\in(t_{_{j}},t_{_{j+1}})$ and $j=1,...,n-1$ the functional value
$g(t)$ is equal to
\[
\psi_{_{j}}(t)=L_{_{1}}g(t_{_{j}})A_{_{j}}^{^{\left[  1\right]  }}(t)+L_{_{1}%
}g(t_{_{j+1}})B_{j}^{^{\left[  1\right]  }}(t)+g(t_{_{j}})A_{_{j}}^{^{\left[
2\right]  }}(t)+g(t_{_{j+1}})B_{_{j}}^{^{\left[  2\right]  }}(t).
\]
Further on, $g(t)$ is equal to $\psi_{_{j-1}}(t)$ on the interval $(t_{_{j-1}%
},t_{_{j}})$. Since $g$ has a continuous derivative at $t_{_{j}}$ we conclude
that for every $j=2,....n-1$
\[
\frac{d}{dt}\psi_{_{j-1}}(t_{_{j}})=L_{_{1}}g(t_{_{j-1}})\frac{dA_{_{j-1}%
}^{^{\left[  1\right]  }}}{dt}(t_{_{j}})+L_{_{1}}g(t_{_{j}})\frac{dB_{_{j-1}%
}^{^{\left[  1\right]  }}}{dt}(t_{_{j}})+g(t_{_{j-1}})\frac{dA_{_{j-1}%
}^{^{\left[  2\right]  }}}{dt}(t_{_{j}})+g(t_{_{j}})\frac{dB_{_{j-1}%
}^{^{\left[  2\right]  }}}{dt}(t_{_{j}})
\]
is equal to
\[
\frac{d}{dt}\psi_{_{j}}(t_{_{j}})=L_{_{1}}g(t_{_{j}})\frac{dA_{_{j}%
}^{^{\left[  1\right]  }}}{dt}(t_{_{j}})+L_{_{1}}g(t_{_{j+1}})\frac{dB_{_{j}%
}^{^{\left[  1\right]  }}}{dt}(t_{_{j}})+g(t_{_{j}})\frac{dA_{_{j}}^{^{\left[
2\right]  }}}{dt}(t_{_{j}})+g(t_{_{j+1}})\frac{dB_{_{j}}^{^{\left[  2\right]
}}}{dt}(t_{_{j}}).
\]
We bring the summands containing $L_{_{1}}g^{\prime}s$ to the left hand side
and the summands containing $g^{\prime}s$ to the right hand side: then
\[
\widetilde{R}_{_{j}}:=L_{_{1}}g(t_{_{j-1}})\frac{dA_{_{j-1}}^{^{\left[
1\right]  }}}{dt}(t_{_{j}})+L_{_{1}}g(t_{_{j}})\frac{dB_{_{j-1}}^{^{\left[
1\right]  }}}{dt}(t_{_{j}})-L_{_{1}}g(t_{_{j}})\frac{dA_{_{j}}^{^{\left[
1\right]  }}}{dt}(t_{_{j}})-L_{_{1}}g(t_{_{j+1}})\frac{dB_{_{j}}^{^{\left[
1\right]  }}}{dt}(t_{_{j}})
\]
is equal to
\[
\widetilde{Q}_{_{j}}:=g(t_{_{j}})\frac{dA_{_{j}}^{^{\left[  2\right]  }}}%
{dt}(t_{_{j}})+g(t_{_{j+1}})\frac{dB_{_{j}}^{^{\left[  2\right]  }}}%
{dt}(t_{_{j}})-g(t_{_{j-1}})\frac{dA_{_{j-1}}^{^{\left[  2\right]  }}}%
{dt}(t_{_{j}})-g(t_{_{j}})\frac{dB_{_{j-1}}^{^{\left[  2\right]  }}}%
{dt}(t_{_{j}}).
\]
The number $\widetilde{R}_{_{j}}$ is an inner product of the $j$-th row of a
$\left(  n-2\right)  \times\left(  n-2\right)  $ matrix $R$ and the vector
$\left(  L_{_{1}}g(t_{_{j}})\right)  _{j=2,...n-1}^{T}$ (recall that
$L_{1}g\left(  t_{1}\right)  =L_{1}g\left(  t_{n}\right)  =0$ since $g$ is a
natural spline) where the diagonal entries $R_{_{j,j}}$ of $R$ are defined by
\[
R_{_{j,j}}=\frac{dB_{_{j-1}}^{^{\left[  1\right]  }}}{dt}(t_{_{j}}%
)-\frac{dA_{_{j}}^{^{\left[  1\right]  }}}{dt}(t_{_{j}})
\]
and the nonzero off-diagonal entries
\[
R_{_{j,j+1}}=-\frac{dB_{_{j}}^{^{\left[  1\right]  }}}{dt}(t_{_{j}})\text{ and
}R_{_{j,j-1}}=\frac{dA_{_{j-1}}^{^{\left[  1\right]  }}}{dt}(t_{_{j}}).
\]
Similarly we see that $\widetilde{Q}_{_{j}}$ is the inner product of the row
vector
\[
\left(  0,0,...,0,-\frac{dA_{_{j-1}}^{^{\left[  2\right]  }}}{dt}(t_{_{j}%
}),\quad\frac{dA_{_{j}}^{^{\left[  2\right]  }}}{dt}(t_{_{j}})-\frac
{dB_{_{j-1}}^{^{\left[  2\right]  }}}{dt}(t_{_{j}}),\quad\frac{dB_{_{j}%
}^{^{\left[  2\right]  }}}{dt}(t_{_{j}}),0,..,0\right)
\]
with the column vector $\left(  g\left(  t_{_{j}}\right)  \right)
_{j=1,...,n}^{T}$
\end{proof}

\begin{theorem}
\label{r_jj} Let $\lambda_{_{0}},...,\lambda_{_{3}}$ be complex numbers and
$t_{_{1}}<...<t_{_{n}}$ be such that (\ref{PhiCond}) holds. Then the
tridiagonal matrix $R$ is given by%
\begin{align*}
R_{_{j,j}}  &  =\rho(t_{_{j}}-t_{_{j-1}})-\rho(-(t_{_{j+1}}-t_{_{j}}))\\
R_{_{j,j+1}}  &  =\sigma(t_{_{j+1}}-t_{_{j}})\text{ and }R_{_{j+1,j}}%
=-\sigma(-(t_{_{j+1}}-t_{_{j}}))
\end{align*}
where $\rho$ and $\sigma$ are defined in (\ref{eqDefw}) and (\ref{eqDefr}).
\end{theorem}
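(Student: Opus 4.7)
The plan is to substitute the explicit representations of $A_{j}^{[1]},B_{j}^{[1]}$ furnished by Proposition \ref{PropA1} into the formulas for the entries of $R$ that were isolated in the proof of Theorem \ref{Thm1}, namely
\[
R_{j,j}=\frac{dB_{j-1}^{[1]}}{dt}(t_{j})-\frac{dA_{j}^{[1]}}{dt}(t_{j}),\qquad R_{j,j+1}=-\frac{dB_{j}^{[1]}}{dt}(t_{j}),\qquad R_{j,j-1}=\frac{dA_{j-1}^{[1]}}{dt}(t_{j}),
\]
and then simplify using the Cauchy conditions satisfied by the fundamental functions. Recall that $\Phi_{(\lambda_{0},\ldots,\lambda_{3})}$ is of order four, so $\Phi_{(\lambda_{0},\ldots,\lambda_{3})}(0)=\Phi_{(\lambda_{0},\ldots,\lambda_{3})}^{\prime}(0)=0$, while $\Phi_{(\lambda_{0},\lambda_{1})}(0)=0$ and $\Phi_{(\lambda_{0},\lambda_{1})}^{\prime}(0)=1$.

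First I would compute $R_{j,j+1}$. Writing $h:=t_{j+1}-t_{j}$ and differentiating the expression from Proposition \ref{PropA1}
\[
B_{j}^{[1]}(t)=\frac{\Phi_{(\lambda_{0},\ldots,\lambda_{3})}(t-t_{j})}{\Phi_{(\lambda_{2},\lambda_{3})}(h)}-B_{j}^{[2]}(t)\,\frac{\Phi_{(\lambda_{0},\ldots,\lambda_{3})}(h)}{\Phi_{(\lambda_{2},\lambda_{3})}(h)}
\]
at $t=t_{j}$, the first term vanishes because $\Phi_{(\lambda_{0},\ldots,\lambda_{3})}^{\prime}(0)=0$, while $(B_{j}^{[2]})^{\prime}(t_{j})=1/\Phi_{(\lambda_{0},\lambda_{1})}(h)$ by the Cauchy conditions for $\Phi_{(\lambda_{0},\lambda_{1})}$. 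This directly produces $R_{j,j+1}=\sigma(h)$, which is the claimed formula. An entirely analogous calculation at $t=t_{j}$ applied to $A_{j-1}^{[1]}$ (with $h$ replaced by $-(t_{j}-t_{j-1})$ in the arguments, since the relevant shift is $t-t_{j}$ with $A_{j-1}^{[1]}$ built from $\Phi_{\ldots}(t-t_{j})$ and $\Phi_{\ldots}(t_{j-1}-t_{j})$) gives $R_{j,j-1}=-\sigma(-(t_{j}-t_{j-1}))$, matching the statement after the index shift.

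For the diagonal entry, both $(B_{j-1}^{[1]})^{\prime}(t_{j})$ and $(A_{j}^{[1]})^{\prime}(t_{j})$ must be evaluated at a point where the argument of $\Phi_{(\lambda_{0},\ldots,\lambda_{3})}$ is nonzero, so neither the first nor the second piece drops out. Differentiating the formula for $B_{j-1}^{[1]}$ at $t_{j}$ yields
\[
\frac{dB_{j-1}^{[1]}}{dt}(t_{j})=\frac{\Phi_{(\lambda_{0},\ldots,\lambda_{3})}^{\prime}(t_{j}-t_{j-1})\Phi_{(\lambda_{0},\lambda_{1})}(t_{j}-t_{j-1})-\Phi_{(\lambda_{0},\lambda_{1})}^{\prime}(t_{j}-t_{j-1})\Phi_{(\lambda_{0},\ldots,\lambda_{3})}(t_{j}-t_{j-1})}{\Phi_{(\lambda_{0},\lambda_{1})}(t_{j}-t_{j-1})\,\Phi_{(\lambda_{2},\lambda_{3})}(t_{j}-t_{j-1})}
\]
which is exactly $\rho(t_{j}-t_{j-1})$. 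The same manipulation applied to $A_{j}^{[1]}$, where the relevant argument in the shifts is $t_{j}-t_{j+1}=-(t_{j+1}-t_{j})$, gives $(A_{j}^{[1]})^{\prime}(t_{j})=\rho(-(t_{j+1}-t_{j}))$. Subtracting produces the stated expression for $R_{j,j}$.

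There is no substantive obstacle: the argument is purely computational, and the only care required is bookkeeping the minus signs arising from the arguments $t-t_{j\pm 1}$ and from the order of the two factors in the numerator of $\rho$. Condition \ref{ConditionSTAR} via (\ref{PhiCondb}) guarantees that every denominator appearing in $\rho$ and $\sigma$ is nonzero, so the final formulas are well defined.
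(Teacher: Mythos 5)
Your proposal is correct and follows essentially the same route as the paper: plug the explicit expressions for $A_{j}^{[1]},B_{j}^{[1]}$ from Proposition \ref{PropA1} into the entries of $R$ identified in the proof of Theorem \ref{Thm1}, use $\Phi_{(\lambda_{0},\ldots,\lambda_{3})}'(0)=0$ and $\Phi_{(\lambda_{0},\lambda_{1})}'(0)=1$ for the off-diagonal entries, and combine the two fractions over the common denominator to recognize $\rho$ for the diagonal. The only cosmetic difference is that you evaluate $\frac{dA_{j-1}^{[1]}}{dt}(t_{j})$ and shift the index, whereas the paper evaluates $\frac{dA_{j}^{[1]}}{dt}(t_{j+1})$ directly; these are the same computation.
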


\begin{proof}
According to (\ref{eqdefAB}) we see that
\begin{equation}
\frac{dA_{_{j}}^{^{\left[  2 \right]  }}}{dt} ( t ) = \frac{\Phi_{(
\lambda_{_{0}},\lambda_{_{1}} )}^{\prime} ( t-t_{_{j+1}} )} {\Phi_{(
\lambda_{_{0}},\lambda_{_{1}})} ( t_{_{j}} - t_{_{j+1}} )} \text{ and }
\frac{dB_{_{j}}^{^{\left[  2 \right]  }}}{dt} ( t ) = \frac{\Phi_{(
\lambda_{_{0}},\lambda_{_{1}} )}^{\prime} (t-t_{_{j}})} {\Phi_{( \lambda
_{_{0}},\lambda_{_{1}} )} ( t_{_{j+1}}-t_{_{j}} )}. \label{eqDerAB}%
\end{equation}
Since
\[
\frac{dB_{_{j}}^{^{\left[  1 \right]  }}}{dt} ( t ) = \frac{\Phi_{(
\lambda_{_{0}},...,\lambda_{_{3}} )}^{\prime} ( t-t_{_{j}} )} {\Phi_{(
\lambda_{_{2}},\lambda_{_{3}} )} ( t_{_{j+1}}-t_{_{j}} )} - \frac{\Phi_{(
\lambda_{_{0}},...,\lambda_{_{3}})} ( t_{_{j+1}}-t_{_{j}} )} {\Phi_{(
\lambda_{_{2}},\lambda_{_{3}} )} ( t_{_{j+1}}-t_{_{j}} )} \frac{\Phi_{(
\lambda_{_{0}},\lambda_{_{1}} )}^{\prime} ( t-t_{_{j}} )} {\Phi_{(
\lambda_{_{0}},\lambda_{_{1}})} ( t_{_{j+1}}-t_{_{j}})}
\]
we obtain
\[
R_{_{j,j+1}} = -\frac{dB_{_{j}}^{^{\left[  1 \right]  }}}{dt} ( t_{_{j}} ) =
\frac{\Phi_{( \lambda_{_{0}},...,\lambda_{_{3}} )} ( t_{_{j+1}}-t_{_{j}} )}
{\Phi_{( \lambda_{_{2}},\lambda_{_{3}} )} ( t_{_{j+1}}-t_{_{j}} )} \frac
{1}{\Phi_{( \lambda_{_{0}},\lambda_{_{1}})} ( t_{_{j+1}}-t_{_{j}} )} = \sigma(
t_{_{j+1}}-t_{_{j}} ).
\]
Similarly it follows that
\[
R_{_{j+1,j}} = \frac{dA_{_{j}}^{^{\left[  1 \right]  }}}{dt} ( t_{_{j+1}} ) =
-\frac{\Phi_{( \lambda_{_{0}},...,\lambda_{_{3}} )} ( t_{_{j}}-t_{_{j+1}} )}
{\Phi_{( \lambda_{_{2}},\lambda_{_{3}} )} ( t_{_{j}}-t_{_{j+1}} )} \frac
{1}{\Phi_{( \lambda_{_{0}}, \lambda_{_{1}} )} ( t_{_{j}}-t_{_{j+1}} )} =
-\sigma( -( t_{_{j+1}}-t_{_{j}}) ).
\]
Next we consider
\[
R_{_{j,j}} = \frac{dB_{_{j-1}}^{^{\left[  1 \right]  }}}{dt} ( t_{_{j}} ) -
\frac{dA_{_{j}}^{^{\left[  1 \right]  }}}{dt} ( t_{_{j}} ).
\]
The formula for $R_{_{jj}}$ in the theorem now follows from
\[
\frac{dB_{_{j-1}}^{^{\left[  1 \right]  }}}{dt} ( t_{_{j}} ) = \frac{\Phi_{(
\lambda_{_{0}},...,\lambda_{_{3}} )}^{\prime}( t_{_{j}}-t_{_{j-1}} )} {\Phi_{(
\lambda_{_{2}},\lambda_{_{3}} )} ( t_{_{j}}-t_{_{j-1}} )} - \frac{\Phi_{(
\lambda_{_{0}},...,\lambda_{_{3}} )} ( t_{_{j}}-t_{_{j-1}} )} {\Phi_{(
\lambda_{_{2}},\lambda_{_{3}} )} ( t_{_{j}}-t_{_{j-1}} )} \frac{\Phi_{(
\lambda_{_{0}},\lambda_{_{1}} )}^{\prime}( t_{_{j}}-t_{_{j-1}} )} {\Phi_{(
\lambda_{_{0}},\lambda_{_{1}} )} ( t_{_{j}}-t_{_{j-1}} )}
\]
and
\[
\frac{dA_{_{j}}^{^{\left[  1 \right]  }}}{dt} ( t_{_{j}} ) = \frac{\Phi_{(
\lambda_{_{0}},...,\lambda_{_{3}} )}^{\prime}( t_{_{j}}-t_{_{j+1}} )} {\Phi_{(
\lambda_{_{2}},\lambda_{_{3}} )} ( t_{_{j}}-t_{_{j+1}} )} - \frac{\Phi_{(
\lambda_{_{0}},...,\lambda_{_{3}} )} ( t_{_{j}}-t_{_{j+1}} )} {\Phi_{(
\lambda_{_{2}},\lambda_{_{3}} )} ( t_{_{j}}-t_{_{j+1}} )} \frac{\Phi_{(
\lambda_{_{0}},\lambda_{_{1}} )}^{\prime}( t_{_{j}}-t_{_{j+1}} )} {\Phi_{(
\lambda_{_{0}},\lambda_{_{1}} )} ( t_{_{j}}-t_{_{j+1}} )}.
\]

\end{proof}

In the same spirit one proves the following result:

\begin{theorem}
\label{ThmQ} The $n\times(n-2)$ matrix $Q$ is given by
\begin{equation}
q_{_{j-1,j}}=-\frac{1}{\Phi_{(\lambda_{_{0}},\lambda_{_{1}})}(t_{_{j-1}%
}-t_{_{j}})},\qquad q_{_{j+1,j}}=\frac{1}{\Phi_{(\lambda_{_{0}},\lambda_{_{1}%
})}(t_{_{j+1}}-t_{_{j}})} \label{Qij}%
\end{equation}
and
\[
q_{_{jj}}=\frac{\Phi_{(\lambda_{_{0}},\lambda_{_{1}})}^{\prime}(t_{_{j}%
}-t_{_{j+1}})}{\Phi_{(\lambda_{_{0}},\lambda_{_{1}})}(t_{_{j}}-t_{_{j+1}}%
)}-\frac{\Phi_{(\lambda_{_{0}},\lambda_{_{1}})}^{\prime}(t_{_{j}}-t_{_{j-1}}%
)}{\Phi_{(\lambda_{_{0}},\lambda_{_{1}})}(t_{_{j}}-t_{_{j-1}})}%
\]
for $j=2,3,...,n-1$.
\end{theorem}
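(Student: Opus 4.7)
The plan is to extract the entries of $Q$ directly from the expression for $\widetilde{Q}_{j}$ that appears in the proof of Theorem \ref{Thm1}, and then evaluate each of the derivatives appearing there using formula (\ref{eqDerAB}) together with the initial conditions defining the fundamental function.

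More concretely, I first observe that from the proof of Theorem \ref{Thm1},
\[
\widetilde{Q}_{j}=-g(t_{j-1})\frac{dA_{j-1}^{[2]}}{dt}(t_{j})+g(t_{j})\Bigl(\frac{dA_{j}^{[2]}}{dt}(t_{j})-\frac{dB_{j-1}^{[2]}}{dt}(t_{j})\Bigr)+g(t_{j+1})\frac{dB_{j}^{[2]}}{dt}(t_{j}),
\]
so the $j$-th column of $Q$ has only three non-zero entries, namely
\[
q_{j-1,j}=-\tfrac{dA_{j-1}^{[2]}}{dt}(t_{j}),\qquad q_{j,j}=\tfrac{dA_{j}^{[2]}}{dt}(t_{j})-\tfrac{dB_{j-1}^{[2]}}{dt}(t_{j}),\qquad q_{j+1,j}=\tfrac{dB_{j}^{[2]}}{dt}(t_{j}).
\]
This is the structural step: it simply re-reads the construction of $Q$ in Theorem \ref{Thm1}.

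The second step is to compute these four derivatives using (\ref{eqDerAB}). For the off-diagonal entries we evaluate $\frac{dB_{j}^{[2]}}{dt}$ at $t=t_{j}$ and $\frac{dA_{j-1}^{[2]}}{dt}$ at $t=t_{j}$, which produces arguments $t_{j}-t_{j}=0$ inside $\Phi_{(\lambda_{0},\lambda_{1})}^{\prime}$ in both cases. Since $\Phi_{(\lambda_{0},\lambda_{1})}$ is the fundamental function of order $2$ (so $N=1$), its defining Cauchy data give $\Phi_{(\lambda_{0},\lambda_{1})}^{\prime}(0)=1$. Substituting yields precisely
\[
q_{j+1,j}=\frac{1}{\Phi_{(\lambda_{0},\lambda_{1})}(t_{j+1}-t_{j})},\qquad q_{j-1,j}=-\frac{1}{\Phi_{(\lambda_{0},\lambda_{1})}(t_{j-1}-t_{j})},
\]
which are the formulas in (\ref{Qij}). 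For the diagonal entry $q_{j,j}$, both surviving terms in (\ref{eqDerAB}) are evaluated at non-zero arguments $t_{j}-t_{j+1}$ and $t_{j}-t_{j-1}$, so no initial-condition simplification occurs and the two logarithmic-derivative quotients emerge directly.

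I do not expect any real obstacle beyond careful bookkeeping: the only things one must not get wrong are (i) the sign conventions inherited from the rearrangement in Theorem \ref{Thm1} (which $A$ and $B$ derivatives contribute with which sign), and (ii) the identification of the correct column index $j$ given that the indexing of $\gamma$ runs over the interior nodes $j=2,\dots,n-1$ while $\mathbf{g}$ runs over $j=1,\dots,n$. Both the well-posedness of the quotients and the non-vanishing of the denominators $\Phi_{(\lambda_{0},\lambda_{1})}(t_{j\pm1}-t_{j})$ are guaranteed by hypothesis (\ref{PhiCond}).
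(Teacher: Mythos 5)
Your proposal is correct and follows essentially the same route as the paper: read off the three nonzero entries of the $j$-th column of $Q$ from the row vector representing $\widetilde{Q}_{j}$ in the proof of Theorem \ref{Thm1}, then evaluate the derivatives via (\ref{eqDerAB}), using $\Phi_{(\lambda_{0},\lambda_{1})}^{\prime}(0)=1$ for the off-diagonal entries and the nonvanishing of the denominators from (\ref{PhiCond}). No gaps.
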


\begin{proof}
According to (\ref{eqDerAB}) we first see that
\[
\frac{dA_{_{j}}^{^{\left[  2 \right]  }}}{dt} ( t ) = \frac{\Phi_{(
\lambda_{_{0}},\lambda_{_{1}} )}^{\prime}( t-t_{_{j+1}} )} {\Phi_{(
\lambda_{_{0}},\lambda_{_{1}} )} ( t_{_{j}}-t_{_{j+1}} )} \text{ and }
\frac{dB_{_{j}}^{^{\left[  2 \right]  }}}{dt} ( t ) = \frac{\Phi_{(
\lambda_{_{0}},\lambda_{_{1}} )}^{\prime} ( t-t_{_{j}} )} { \Phi_{(
\lambda_{_{0}},\lambda_{_{1}} )} ( t_{_{j+1}}-t_{_{j}} )}.
\]
It follows that
\begin{align*}
q_{_{j-1,j}}  &  = - \frac{dA_{_{j-1}}^{^{\left[  2 \right]  }}}{dt} (
t_{_{j}} ) = - \frac{\Phi_{( \lambda_{_{0}},\lambda_{_{1}} )}^{\prime} (
t_{_{j}}-t_{_{j}} )} {\Phi_{( \lambda_{_{0}},\lambda_{_{1}} )} ( t_{_{j-1}%
}-t_{_{j}} )} = - \frac{1}{\Phi_{( \lambda_{_{0}},\lambda_{_{1}} )} (
t_{_{j-1}}-t_{_{j}} )}\\
q_{_{j+1,j}}  &  = \frac{dB_{_{j}}^{^{\left[  2 \right]  }}}{dt} ( t_{_{j}} )
= \frac{ \Phi_{( \lambda_{_{0}},\lambda_{_{1}} )}^{\prime} ( t_{_{j}}-t_{_{j}}
)} {\Phi_{( \lambda_{_{0}},\lambda_{_{1}} )} ( t_{_{j+1}}-t_{_{j}} )} =
\frac{1}{ \Phi_{( \lambda_{_{0}},\lambda_{_{1}} )} ( t_{_{j+1}}-t_{_{j}} )}%
\end{align*}
and
\[
q_{_{jj}} = \frac{dA_{_{j}}^{^{\left[  2 \right]  }}}{dt} ( t_{_{j}} ) -
\frac{dB_{_{j-1}}^{^{\left[  2 \right]  }}}{dt} ( t_{_{j}} ) = \frac{\Phi_{(
\lambda_{_{0}},\lambda_{_{1}} )}^{\prime} ( t_{_{j}}-t_{_{j+1}} )} {\Phi_{(
\lambda_{_{0}},\lambda_{_{1}} )} ( t_{_{j}}-t_{_{j+1}} )} - \frac{\Phi_{(
\lambda_{_{0}},\lambda_{_{1}} )}^{\prime} ( t_{_{j}}-t_{_{j-1}} )} {\Phi_{(
\lambda_{_{0}},\lambda_{_{1}} )} ( t_{_{j}}-t_{_{j-1}} )}
\]

\end{proof}

So far we have derived formulae connecting the values $\mathbf{g}=\left(
g\left(  t_{_{1}}\right)  ,...,g\left(  t_{_{n}}\right)  \right)  ^{T}$ and
the derivatives $\gamma=\left(  L_{_{1}}g\left(  t_{_{2}}\right)
,...,L_{_{1}}g\left(  t_{_{n-1}}\right)  \right)  ^{T}$ of the $L-$spline $g$.
The arguments can be reversed to construct the corresponding $L$-spline $g$
interpolating given data $z=\left(  z_{_{1}},...,z_{_{n}}\right)  ^{T}$ as we
shall now outline:

\begin{enumerate}
\item[Step 1] Set $g\left(  t_{i}\right)  =z_{_{i}}$ for $i=1,..n,$

\item[Step 2] Set $x=Q^{T}\mathbf{g}$ and solve $R\gamma=x$ (assuming that the
matrix $R$ is invertible).
\end{enumerate}

Then we use formula (\ref{eqpsij2}) to recover the values of the $L-$spline on
every interval $\left[  t_{j},t_{j+1}\right]  .$

Thus the solvability of the equation $R\gamma=x$ is a key property for the
existence of interpolation $L-$splines for arbitrary interpolation data $g.$
The fact that the matrix $R$ is tridiagonal and $Q$ is tridiagonal-like imply
that the number of operations for the computation of the $L-$spline $g$ is
$O\left(  n\right)  .$

The Levy--Desplanques theorem says that a strictly diagonally dominant matrix
(or an irreducibly diagonally dominant matrix) is invertible, see
\cite{HornJohnson}, Theorem 6.1.10. In the following section 4 and section 5 we shall
provide sufficient criteria such that the matrix $R$ is strictly diagonally
dominant, and in particular invertible.

\section{Diagonal dominance of $R$ in the general case}

At first we present a simple criterion for diagonal dominance of the matrix
$R$ .

\begin{theorem}
\label{bound} \label{ThmDom}Let $\left(  \lambda_{0},\lambda_{1},\lambda
_{2},\lambda_{3}\right)  \in\mathbb{C}^{4}$ and assume that $\rho(x)$ and
$-\rho(-x)$ are positive on the interval $(0,\delta)$, and let $M_{\delta}>0$
be such that
\begin{equation}
\left\vert \sigma(-x)\right\vert \leq M_{\delta}\left\vert \rho(x)\right\vert
\text{ for all }x\in\left[  -\delta,\delta\right]  . \label{eqsigest}%
\end{equation}
Then the estimate
\[
\left\vert R_{_{j,j-1}}\right\vert +\left\vert R_{_{j,j+1}}\right\vert \leq
M_{\delta}\left\vert R_{_{j,j}}\right\vert \text{ }%
\]
holds for all partitions $t_{_{1}}<....<t_{_{n}}$ such that $t_{_{j+1}%
}-t_{_{j}}\leq\delta$ for $j=1,...,n-1.$
\end{theorem}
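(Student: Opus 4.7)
The plan is to reduce everything to the explicit formulas for the entries of $R$ given in Theorem \ref{r_jj}, and then just add two instances of the hypothesis (\ref{eqsigest}).

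First I would introduce the shorthand $h_{j-1}=t_j-t_{j-1}$ and $h_j=t_{j+1}-t_j$, both of which lie in $(0,\delta]$ by assumption on the partition. With this notation Theorem \ref{r_jj} gives
\[
R_{j,j}=\rho(h_{j-1})-\rho(-h_j),\qquad R_{j,j+1}=\sigma(h_j),\qquad R_{j,j-1}=-\sigma(-h_{j-1}).
\]
By the positivity hypothesis, $\rho(h_{j-1})>0$ and $-\rho(-h_j)>0$, so $R_{j,j}$ is a sum of two positive numbers and $|R_{j,j}|=\rho(h_{j-1})-\rho(-h_j)=\rho(h_{j-1})+|\rho(-h_j)|$.

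Next I would invoke (\ref{eqsigest}) twice. Taking $x=h_{j-1}\in[0,\delta]$ yields $|\sigma(-h_{j-1})|\le M_\delta|\rho(h_{j-1})|=M_\delta\rho(h_{j-1})$, which bounds $|R_{j,j-1}|$. Taking instead $x=-h_j\in[-\delta,0]$ rewrites the hypothesis as $|\sigma(h_j)|\le M_\delta|\rho(-h_j)|=M_\delta(-\rho(-h_j))$, which bounds $|R_{j,j+1}|$. Adding these two inequalities gives
\[
|R_{j,j-1}|+|R_{j,j+1}|\le M_\delta\bigl(\rho(h_{j-1})-\rho(-h_j)\bigr)=M_\delta|R_{j,j}|,
\]
which is the desired estimate.

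There is essentially no obstacle here: once one recognizes that the positivity hypothesis turns $R_{j,j}$ into a sum of positive pieces that line up exactly with the two instances of (\ref{eqsigest}) needed to bound the off-diagonal entries, the proof is a one-line addition. The only thing to be careful about is tracking signs in the formula $R_{j,j-1}=-\sigma(-h_{j-1})$ coming from Theorem \ref{r_jj}, and making sure the hypothesis (\ref{eqsigest}) is applied on the full symmetric interval $[-\delta,\delta]$ so that both signs of the argument are available.
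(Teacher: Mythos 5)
Your proof is correct and is essentially the paper's own argument: both use the entry formulas from Theorem \ref{r_jj}, apply the hypothesis (\ref{eqsigest}) once with a positive and once with a negative argument, and use the positivity of $\rho(x)$ and $-\rho(-x)$ to identify $\left\vert R_{j,j}\right\vert$ with $\rho(h_{j-1})+\left\vert \rho(-h_{j})\right\vert$ (the paper phrases this as the ratio on the right of its display being equal to $1$, which is the same computation). No substantive difference.
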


\begin{proof}
Since $R_{_{j,j+1}}=\sigma(t_{_{j+1}}-t_{_{j}})$ and $R_{_{j,j-1}}%
=-\sigma(-(t_{_{j}}-t_{_{j-1}}))$ we have the general formula:
\[
\frac{\left\vert R_{_{j,j-1}}\right\vert +\left\vert R_{_{j,j+1}}\right\vert
}{\left\vert R_{_{j,j}}\right\vert }=\frac{\left\vert \sigma(t_{_{j+1}%
}-t_{_{j}})\right\vert +\left\vert \sigma(-(t_{_{j}}-t_{_{j-1}}))\right\vert
}{\left\vert \rho(t_{_{j}}-t_{_{j-1}})-\rho(-(t_{_{j+1}}-t_{_{j}}))\right\vert
}%
\]
Using the estimate (\ref{eqsigest}) we obtain
\begin{equation}
\frac{\left\vert R_{_{j,j-1}}\right\vert +\left\vert R_{_{j,j+1}}\right\vert
}{\left\vert R_{_{j,j}}\right\vert }\leq M_{\delta}\frac{\left\vert
\rho(-(t_{_{j+1}}-t_{_{j}}))\right\vert +\left\vert \rho(t_{_{j}}-t_{_{j-1}%
})\right\vert }{\left\vert \rho(t_{_{j}}-t_{_{j-1}})-\rho(-(t_{_{j+1}}%
-t_{_{j}}))\right\vert }. \label{eqMdelta}%
\end{equation}
for all partitions $t_{_{1}}<....<t_{_{n}}$ such that $t_{_{j+1}}-t_{_{j}}%
\leq\delta$ for $j=1,...,n.$ Since we assume that $\rho(x)$ and $-\rho(-x)$
are positive on the interval $(0,\delta)$ the fraction on the right hand side
in (\ref{eqMdelta}) is equal to $1.$
\end{proof}

The sufficient criterion reduces the problem of finding the best constant
$M_{\delta}$ for strict diagonal dominance to the determination of the
absolute maximum of the function
\[
\tau\left(  x\right)  :=\frac{-\sigma\left(  -x\right)  }{\rho\left(
x\right)  }\text{ for }x\in\mathbb{R}.
\]
It is convenient to consider the numerators of the functions $\rho(x)$ and
$\sigma(x)$ given by
\begin{align}
\rho_{_{0}}(x)  &  =\Phi_{(\lambda_{_{0}},...,\lambda_{_{3}})}^{\prime}%
(x)\Phi_{(\lambda_{_{0}},\lambda_{_{1}})}(x)-\Phi_{(\lambda_{_{0}}%
,...,\lambda_{_{3}})}(x)\Phi_{(\lambda_{_{0}},\lambda_{_{1}})}^{\prime
}(x)\label{eqDefrhonull}\\
\sigma_{_{0}}(x)  &  =\Phi_{(\lambda_{_{0}},...,\lambda_{_{3}})}(x).
\label{eqDefsigmanull}%
\end{align}
The Taylor expansion of $\Phi_{\left(  \lambda_{0},...,\lambda_{N}\right)  }$
has been discussed in \cite{KoRe13}:
\begin{equation}
\Phi_{\left(  \lambda_{0},...,\lambda_{3}\right)  }\left(  x\right)
=\frac{x^{3}}{3!}+\frac{A}{4!}x^{4}+\frac{B}{5!}x^{5}+g_{6}\left(  x\right)
\label{eqTaylor1}%
\end{equation}
where $g_{6}$ has a zero of order $6$ and $A=\lambda_{0}+\lambda_{1}%
+\lambda_{2}+\lambda_{3}$ and
\begin{equation}
B=\lambda_{0}^{2}+\lambda_{1}^{2}+\lambda_{2}^{2}+\lambda_{3}^{2}+\lambda
_{0}\lambda_{1}+\lambda_{0}\lambda_{2}+\lambda_{0}\lambda_{3}+\lambda
_{1}\lambda_{2}+\lambda_{1}\lambda_{3}+\lambda_{2}\lambda_{3}\allowbreak.
\label{eqDefB}%
\end{equation}
It is easy to check that
\begin{equation}
B=\frac{1}{2}A^{2}+\frac{1}{2}\left(  \lambda_{0}^{2}+\lambda_{1}^{2}%
+\lambda_{2}^{2}+\lambda_{3}^{2}\right)  \label{eqDefB2}%
\end{equation}

\begin{proposition}
\label{PropositionMdeltaHalf}Let $\left(  \lambda_{_{0}},\lambda_{_{1}%
},\lambda_{_{2}},\lambda_{_{3}}\right)  \in\mathbb{C}^{4}$ and define
$A=\lambda_{0}+\lambda_{1}+\lambda_{2}+\lambda_{3}$ and $C=$ $\lambda
_{0}+\lambda_{1}.$ Then
\begin{equation}
\tau\left(  x\right)  =\frac{-\sigma(-x)}{\rho(x)}=\frac{\Phi_{\left(
A-\lambda_{0},....,A-\lambda_{3}\right)  }\left(  x\right)  }{\rho_{0}(x)}.
\label{eqDeftau}%
\end{equation}
The Taylor polynomial of degree $2$ of $\tau\left(  x\right)  $ is given by
\[
\frac{1}{2}+\frac{1}{8}\left(  \frac{3A}{2}-C\right)  \allowbreak x+\frac
{1}{8}\allowbreak\left(  \frac{7A^{2}}{16}-\frac{AC}{2}+\frac{C^{2}}{4}%
-\frac{B}{5}\right)  \allowbreak x^{2}.
\]

\end{proposition}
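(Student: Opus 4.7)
The plan is to prove the proposition in two stages: first establish the identity (\ref{eqDeftau}) from the definitions of $\rho$ and $\sigma$, and then extract the claimed Taylor coefficients by direct expansion.

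For the identity, I would start from the definitions
\[
\sigma(-x)=\frac{\Phi_{(\lambda_0,\ldots,\lambda_3)}(-x)}{\Phi_{(\lambda_0,\lambda_1)}(-x)\Phi_{(\lambda_2,\lambda_3)}(-x)},\qquad \rho(x)=\frac{\rho_0(x)}{\Phi_{(\lambda_0,\lambda_1)}(x)\Phi_{(\lambda_2,\lambda_3)}(x)}
\]
and divide. The denominator products at $\pm x$ are related by Proposition \ref{PropSym}, specifically (\ref{eqdouble}), which cancels the $\Phi$-products and leaves a factor $e^{Ax}$ with $A=\lambda_0+\lambda_1+\lambda_2+\lambda_3$. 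Then I would apply (\ref{neg_lambdas}) to rewrite $\Phi_{(\lambda_0,\ldots,\lambda_3)}(-x)=-\Phi_{(-\lambda_0,\ldots,-\lambda_3)}(x)$. The remaining step is the shift identity
\[
e^{Ax}\Phi_{(-\lambda_0,\ldots,-\lambda_3)}(x)=\Phi_{(A-\lambda_0,\ldots,A-\lambda_3)}(x),
\]
which follows at once from the contour integral representation (\ref{defPhi}) by the substitution $w=z-A$. This gives $\tau(x)=\Phi_{(A-\lambda_0,\ldots,A-\lambda_3)}(x)/\rho_0(x)$ as asserted.

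For the Taylor polynomial I would apply (\ref{eqTaylor1}) twice: once to $\Phi_{(\lambda_0,\ldots,\lambda_3)}$ with parameters $A,B$, and once to $\Phi_{(A-\lambda_0,\ldots,A-\lambda_3)}$, whose corresponding parameters $A',B'$ are $A'=3A$ and, using (\ref{eqDefB2}) together with $\sum(A-\lambda_i)^2=2A^2+\sum\lambda_i^2$, $B'=5A^{2}+B$. Thus
\[
\Phi_{(A-\lambda_0,\ldots,A-\lambda_3)}(x)=\frac{x^{3}}{6}\Bigl[1+\tfrac{3A}{4}x+\tfrac{5A^{2}+B}{20}x^{2}+O(x^{3})\Bigr].
\]
For the denominator I expand $\Phi_{(\lambda_0,\lambda_1)}(x)=x+\tfrac{C}{2}x^{2}+\tfrac{D}{6}x^{3}+O(x^{4})$ (with $D=\lambda_0^{2}+\lambda_0\lambda_1+\lambda_1^{2}$) and multiply out $\rho_0=\Phi'_{(\lambda_0,\ldots,\lambda_3)}\,\Phi_{(\lambda_0,\lambda_1)}-\Phi_{(\lambda_0,\ldots,\lambda_3)}\,\Phi'_{(\lambda_0,\lambda_1)}$; the unknown coefficient $D$ cancels (a Wronskian-type phenomenon that will also cancel all higher $\lambda_0,\lambda_1$-only coefficients beyond what is needed), yielding
\[
\rho_0(x)=\frac{x^{3}}{3}\Bigl[1+\bigl(\tfrac{C}{4}+\tfrac{3A}{8}\bigr)x+\bigl(\tfrac{AC}{8}+\tfrac{B}{10}\bigr)x^{2}+O(x^{3})\Bigr].
\]

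Finally I would form $\tau(x)$ as the ratio, producing the leading factor $\tfrac12$ immediately from the cubic prefactors, and then expand the fraction $(1+\cdots)/(1+u)$ using $(1+u)^{-1}=1-u+u^{2}+O(u^{3})$. Collecting powers of $x$ gives the linear coefficient $\tfrac12\bigl(\tfrac{3A}{8}-\tfrac{C}{4}\bigr)=\tfrac18(\tfrac{3A}{2}-C)$, and a routine collection of $A^{2},B,AC,C^{2}$ contributions gives the quadratic coefficient $\tfrac18(\tfrac{7A^{2}}{16}-\tfrac{AC}{2}+\tfrac{C^{2}}{4}-\tfrac{B}{5})$. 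The only real obstacle is the bookkeeping in this series division: one must keep enough terms of $\Phi_{(\lambda_0,\lambda_1)}$ to witness the cancellation of $D$, and carefully combine the $A^{2}$ contributions from three separate sources ($\tfrac{A^2}{4}$, $-\tfrac{9A^2}{32}$, $\tfrac{9A^2}{64}$) to reach the claimed $\tfrac{7A^{2}}{64}$ coefficient before the final factor of $\tfrac12$.
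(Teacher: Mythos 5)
Your proposal is correct and follows essentially the same route as the paper: the identity via Proposition \ref{PropSym} together with the shift $e^{Ax}\Phi_{(-\lambda_0,\ldots,-\lambda_3)}(x)=\Phi_{(A-\lambda_0,\ldots,A-\lambda_3)}(x)$, then the Taylor expansions with $\widetilde{A}=3A$, $\widetilde{B}=5A^2+B$, the $\rho_0$ expansion $\tfrac13 x^3+(\tfrac{A}{8}+\tfrac{C}{12})x^4+(\tfrac{B}{30}+\tfrac{AC}{24})x^5+\cdots$ (with $D$ cancelling), and series division; your arithmetic checks out. The only addition beyond the paper is your contour-integral justification of the shift identity, which the paper uses without proof.
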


\begin{proof}
By Proposition \ref{PropSym} we see that%
\[
\tau\left(  x\right)  =\frac{-\sigma\left(  -x\right)  }{\rho\left(  x\right)
}=\frac{-\sigma_{0}\left(  -x\right)  \Phi_{(\lambda_{_{0}},\lambda_{_{1}}%
)}(x)\Phi_{(\lambda_{_{2}},\lambda_{_{3}})}(x)}{\rho_{0}\left(  x\right)
\Phi_{(\lambda_{_{0}},\lambda_{_{1}})}(-x)\Phi_{(\lambda_{_{2}},\lambda_{_{3}%
})}(-x)}=\frac{-\sigma_{0}\left(  -x\right)  e^{\left(  \lambda_{0}%
+\cdots+\lambda_{3}\right)  x}}{\rho_{0}\left(  x\right)  }%
\]
and (\ref{eqDeftau}) follows from the identity
\[
-\sigma_{0}\left(  -x\right)  e^{\left(  \lambda_{0}+\cdots+\lambda
_{3}\right)  x}=\Phi_{\left(  -\lambda_{0},....,-\lambda_{3}\right)  }\left(
x\right)  e^{Ax}=\Phi_{\left(  A-\lambda_{0},....,A-\lambda_{3}\right)
}\left(  x\right)  .
\]
The Taylor expansion of $\Phi_{(\lambda_{_{0}},\lambda_{_{1}})}$ is of the
form
\[
\Phi_{(\lambda_{_{0}},\lambda_{_{1}})}(x)=x+\frac{C}{2!}x^{2}+\frac{D}%
{3!}x^{3}+h_{4}\left(  x\right)
\]
where $C=\lambda_{0}+\lambda_{1}$ and $D=\lambda_{0}^{2}+\lambda_{0}%
\lambda_{1}+\lambda_{1}^{2}$ and $h_{4}\left(  x\right)  $ has a zero of order
$4$ at $x=0.$ This can be directly seen for $\lambda_{1}\neq\lambda_{0}$ from
the Taylor expansion of (\ref{eqdegPhitwo}). Then with $g_{5}=g_{6}^{\prime
}\ $and $h_{3}:=h_{4}^{\prime}$ we can write
\begin{align*}
\rho_{0}\left(  x\right)   &  =\left(  \frac{x^{2}}{2!}+\frac{A}{3!}%
x^{3}+\frac{B}{4!}x^{4}+g_{5}\right)  \left(  x+\frac{C}{2}x^{2}+\frac{D}%
{3!}x^{3}+h_{4}\right) \\
&  -\left(  \frac{x^{3}}{3!}+\frac{A}{4!}x^{4}+\frac{B}{5!}x^{5}+g_{6}\right)
\left(  1+Cx+\frac{D}{2}x^{2}+h_{3}\right)  .
\end{align*}
It follows that
\[
\rho_{0}\left(  x\right)  =\frac{1}{3}x^{3}+\left(  \frac{1}{8}A+\frac{1}%
{12}C\right)  x^{4}+\left(  \frac{1}{30}B+\frac{1}{24}AC\right)  \allowbreak
x^{5}+G_{6}\left(  x\right)
\]
for some $G_{6}$ which has a zero of order $6.$ Now we apply formula
(\ref{eqTaylor1}) applied to the vector $\left(  \widetilde{\lambda}%
_{0},...,\widetilde{\lambda}_{3}\right)  =\left(  A-\lambda_{0},....,A-\lambda
_{3}\right)  ,$ providing the expansion
\[
\Phi_{\left(  A-\lambda_{0},....,A-\lambda_{3}\right)  }\left(  x\right)
=\frac{1}{3!}x^{3}+\frac{\widetilde{A}}{4!}x^{4}+\frac{\widetilde{B}}{5!}%
x^{5}+r_{6}\left(  x\right)
\]
where $\widetilde{A}=\widetilde{\lambda}_{0}+\cdots+\widetilde{\lambda}%
_{3}=3A$ and using (\ref{eqDefB2})
\begin{align*}
\widetilde{B}  &  =\frac{1}{2}\widetilde{A}^{2}+\frac{1}{2}\left(
\widetilde{\lambda}_{0}^{2}+\widetilde{\lambda}_{1}^{2}+\widetilde{\lambda
}_{2}^{2}+\widetilde{\lambda}_{3}^{2}\right) \\
&  =\frac{9}{2}A^{2}+\frac{1}{2}\left(  (A^{2}-2\lambda_{0}A+\lambda_{0}%
^{2})+\ldots+(A^{2}-2\lambda_{3}A+\lambda_{3}^{2})\right) \\
&  =\frac{9}{2}A^{2}+2A^{2}-A^{2}+\frac{1}{2}\left(  \lambda_{0}^{2}%
+\lambda_{1}^{2}+\lambda_{2}^{2}+\lambda_{3}^{2}\right)  =5A^{2}+B
\end{align*}
Hence we have%
\[
\frac{\Phi_{\left(  A-\lambda_{0},....,A-\lambda_{3}\right)  }\left(
x\right)  }{\rho_{0}\left(  x\right)  }=\frac{\frac{1}{3!}x^{3}+\frac{3A}%
{4!}x^{4}+\frac{5A^{2}+B}{5!}x^{5}+r_{6}\left(  x\right)  }{\frac{1}{3}%
x^{3}+\left(  \frac{1}{8}A+\frac{1}{12}C\right)  x^{4}+\left(  \frac{1}%
{30}B+\frac{1}{24}AC\right)  \allowbreak x^{5}+G_{6}}.
\]
Cancel the power $x^{3}$ and write
\[
\frac{\Phi_{\left(  A-\lambda_{0},....,A-\lambda_{3}\right)  }\left(
x\right)  }{\rho_{0}\left(  x\right)  }=\frac{p_{2}+G_{3}}{q_{2}+R_{3}}%
\]
where $p_{2}=\frac{1}{3!}+\frac{3A}{4!}x+\frac{5A^{2}+B}{5!}x^{2}$ and
$q_{2}=\frac{1}{3}+\left(  \frac{1}{8}A+\frac{1}{12}C\right)  x+\left(
\frac{1}{30}B+\frac{1}{24}AC\right)  \allowbreak x^{2}.$ Note that
\[
\frac{p_{2}+G_{3}}{q_{2}+R_{3}}-\frac{p_{2}}{q_{2}}=\frac{\left(  p_{2}%
+G_{3}\right)  q_{2}-p_{2}\left(  q_{2}+R_{3}\right)  }{q_{2}\left(
q_{2}+R_{3}\right)  }=\frac{G_{3}q_{2}-p_{2}R_{3}}{q_{2}\left(  q_{2}%
+R_{3}\right)  }=\frac{x^{3}H\left(  x\right)  }{q_{2}\left(  q_{2}%
+R_{3}\right)  }.
\]
Thus the Taylor polynomials of second order of $\frac{p_{2}+G_{3}}{q_{2}%
+R_{3}}$ and $\frac{p_{2}}{q_{2}}$ are identical. A computation shows that
\[
\frac{p_{2}\left(  x\right)  }{q_{2}\left(  x\right)  }\ =\frac{1}{2}+\frac
{1}{8}\left(  \frac{3}{2}A-C\right)  \allowbreak x+\frac{1}{8}\allowbreak
\left(  \frac{7}{16}A^{2}-\frac{1}{2}AC+\frac{1}{4}C^{2}-\frac{1}{5}B\right)
x^{2}+\allowbreak O\left(  x^{3}\right)  .
\]

\end{proof}

With similar methods one can show that
\[
\rho\left(  x\right)  =\allowbreak\frac{1}{3}x-\frac{\lambda_{2}+\lambda
_{3}-\lambda_{0}-\lambda_{1}}{24}x^{2}+a_{2}x^{3}+O\left(  x^{4}\right)  .
\]
where
\[
a_{2}=\ -\frac{1}{720}\left(  \lambda_{0}^{2}+\lambda_{1}^{2}+\lambda_{2}%
^{2}+\lambda_{3}^{2}-14\lambda_{0}\lambda_{1}+6\lambda_{0}\lambda_{2}%
+6\lambda_{0}\lambda_{3}+6\lambda_{1}\lambda_{2}+6\lambda_{1}\lambda
_{3}-14\lambda_{2}\lambda_{3}\right)  .\allowbreak
\]
This gives immediately the following result:

\begin{corollary}
\label{CorOdd}Let $\left(  \lambda_{_{0}},\lambda_{_{1}},\lambda_{_{2}%
},\lambda_{_{3}}\right)  \in\mathbb{C}^{4}.$ If $\rho\left(  x\right)  $ is a
real-valued for $x\in\mathbb{R}$ then $\lambda_{2}+\lambda_{3}-\lambda
_{0}-\lambda_{1}$ is a real number. If $\rho$ is an odd function then
$\lambda_{0}+\lambda_{1}=\lambda_{2}+\lambda_{3}.$
\end{corollary}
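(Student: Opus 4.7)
The plan is to read both conclusions directly off the Taylor expansion of $\rho$ stated just before the corollary, namely
\[
\rho(x) = \tfrac{1}{3}x - \tfrac{\lambda_2+\lambda_3-\lambda_0-\lambda_1}{24}x^2 + a_2 x^3 + O(x^4).
\]
Since $\rho$ is a rational expression in the analytic functions $\Phi_{(\lambda_0,\ldots,\lambda_3)}$, $\Phi_{(\lambda_0,\lambda_1)}$, $\Phi_{(\lambda_2,\lambda_3)}$ and since the denominator $\Phi_{(\lambda_0,\lambda_1)}(x)\Phi_{(\lambda_2,\lambda_3)}(x)$ has a zero of order exactly $2$ at $x=0$ while the numerator $\rho_0(x)$ has a zero of order $3$ at $x=0$, $\rho$ is analytic in a neighbourhood of $0$, so this Taylor expansion determines $\rho$ uniquely near $0$.

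For the first claim, I would argue that if $\rho$ is real-valued on the real line, then all its Taylor coefficients at $0$ are real. Indeed, define $\widetilde{\rho}(z):=\overline{\rho(\bar z)}$, which is analytic where $\rho$ is; the hypothesis gives $\widetilde{\rho}(x)=\rho(x)$ for real $x$ near $0$, hence $\widetilde\rho=\rho$ by the identity principle, and the coefficients of $\widetilde\rho$ are the complex conjugates of those of $\rho$. Comparing the coefficient of $x^2$ then shows that $-\tfrac{1}{24}(\lambda_2+\lambda_3-\lambda_0-\lambda_1)$ is real, which is the first assertion.

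For the second claim, if $\rho$ is odd then $\rho(-x)+\rho(x)=0$ for all $x$ in a neighbourhood of $0$, so by uniqueness of Taylor coefficients every even-order coefficient vanishes. In particular the $x^2$-coefficient $-\tfrac{1}{24}(\lambda_2+\lambda_3-\lambda_0-\lambda_1)$ is zero, giving $\lambda_0+\lambda_1=\lambda_2+\lambda_3$ as claimed.

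There is no real obstacle here once the stated Taylor expansion is in hand; the only thing to be careful about is the justification that $\rho$ is genuinely analytic near $0$ so that the Taylor coefficients may be compared (and that real-valuedness on $\mathbb{R}$ forces conjugate-symmetric coefficients). Both points are standard consequences of the analyticity of $\Phi_{\Lambda_N}$ together with the order-of-vanishing computation that already appears in the derivation of the Taylor expansion.
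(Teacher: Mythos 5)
Your argument is correct and is essentially the paper's own: the authors display the expansion $\rho(x)=\tfrac13 x-\tfrac{\lambda_2+\lambda_3-\lambda_0-\lambda_1}{24}x^2+a_2x^3+O(x^4)$ and state that the corollary follows "immediately," which is exactly your reading of the $x^2$-coefficient (real-valuedness forces it to be real; oddness forces it to vanish). Your added justification that $\rho$ is analytic near $0$ (numerator vanishing to order $3$, denominator to order exactly $2$) and that real-valuedness on $\mathbb{R}$ yields real Taylor coefficients only makes explicit what the paper leaves implicit.
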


$\ $Recall that a vector $\Lambda_{N}=\left(  \lambda_{0},...,\lambda
_{N}\right)  $ is conjugation invariant if there exists a permutation
$p:\left\{  0,..N\right\}  \rightarrow\left\{  0,..N\right\}  $ such that
$\overline{\lambda_{j}}=\lambda_{p\left(  j\right)  }$ for $j=0,...,N.$ In
this case the fundamental function $\Phi_{\Lambda_{N}}$ is real-valued. If
$\left(  \lambda_{0},\lambda_{1}\right)  $ and $\left(  \lambda_{2}%
,\lambda_{3}\right)  $ are conjugation-invariant then $\left(  \lambda
_{0},\lambda_{1},\lambda_{2},\lambda_{3}\right)  $ is conjugation invariant
and we infer that
\[
\Phi_{(\lambda_{0},\lambda_{1})},\Phi_{(\lambda_{2},\lambda_{3})}%
,\Phi_{(\lambda_{0},\lambda_{1},\lambda_{2},\lambda_{3})},\Phi_{(\lambda
_{0},\lambda_{1},\lambda_{2},\lambda_{3})}^{\prime}\text{ and }\Phi
_{(\lambda_{2},\lambda_{3})}^{\prime}%
\]
are real-valued functions. Our general Condition \ref{ConditionSTAR} implies
that
\[
\Phi_{(\lambda_{0},\lambda_{1})}(t)\neq0\text{ and }\Phi_{(\lambda_{2}%
,\lambda_{3})}(t)\neq0\text{ for }t\in(0,\delta)
\]
for some $\delta>0,$ hence the functions $\rho(x)$ and $\sigma(x)$ are
real-valued and well-defined and all entries of the matrices $R$ and $Q$ are
real-valued for all $t_{1}<...<t_{n}$ such that $\left\vert t_{j+1}%
-t_{j}\right\vert <\delta$ for $j=1,...,n-1.$

Proposition \ref{PropositionMdeltaHalf} guarantees that we may find a small
enough $\delta>0$ such that $M_{\delta}<1$ which provides the strict diagonal
dominance property of the matrix $R.$ In view of that it is worth studying the
equation $M_{\delta}=1,$ for estimating the critical value of $\delta$ for
which this equality is achieved.

\begin{theorem}
\label{TheoremDIAGDominant}Suppose that $(\lambda_{_{0}},\lambda_{_{1}})$ and
$(\lambda_{2},\lambda_{3})$ are conjugation invariant and $\varepsilon>0.$
Then there exists $\delta>0$ such that the estimate
\[
\left\vert R_{_{j-1,j}}\right\vert +\left\vert R_{_{j,j+1}}\right\vert
\leq\left(  \frac{1}{2}+\varepsilon\right)  \left\vert R_{_{j,j}}\right\vert
\text{ }%
\]
holds for all partitions $t_{_{1}}<....<t_{_{n}}$ such that $t_{_{j+1}%
}-t_{_{j}}\leq\delta$ for $j=1,...,n-1.$
\end{theorem}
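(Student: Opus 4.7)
The plan is to apply Theorem \ref{bound} with $M_\delta = \tfrac{1}{2} + \varepsilon$, so what I need to verify is (i) the sign conditions $\rho(x) > 0$ and $-\rho(-x) > 0$ on some interval $(0, \delta)$ and (ii) the estimate $|\sigma(-x)| \leq (\tfrac12 + \varepsilon)|\rho(x)|$ on $[-\delta, \delta]$. The key observation is that under the conjugation-invariance assumption all of $\rho$, $\sigma$, and $\tau(x) = -\sigma(-x)/\rho(x)$ are real-valued functions (as pointed out in the paragraph preceding the theorem), so the argument reduces to a continuity statement about the real function $\tau$ at the origin, where $\tau(0) = 1/2$ by Proposition \ref{PropositionMdeltaHalf}.

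First I would record the Taylor expansion $\rho(x) = \tfrac{1}{3}x + O(x^2)$ stated just before Corollary \ref{CorOdd}. This immediately gives a $\delta_1 > 0$ such that $\rho(x) > 0$ and $-\rho(-x) > 0$ for all $x \in (0, \delta_1)$, securing hypothesis (i). Moreover this expansion shows that $|\rho(x)|$ does not vanish on $[-\delta_1, \delta_1] \setminus \{0\}$, so $\tau(x)$ is well-defined there.

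Next I would invoke Proposition \ref{PropositionMdeltaHalf}, which establishes that $\tau$ has the Taylor polynomial $\tfrac{1}{2} + \tfrac{1}{8}(\tfrac{3A}{2} - C) x + O(x^2)$ at the origin. In particular $\tau$ extends continuously (indeed analytically) to $x = 0$ with $\tau(0) = \tfrac{1}{2}$. By the continuity of the real-valued function $\tau$, for the given $\varepsilon > 0$ there exists $\delta_2 \in (0, \delta_1]$ such that $|\tau(x) - \tfrac12| < \varepsilon$, and in particular $|\tau(x)| \leq \tfrac{1}{2} + \varepsilon$, for every $x \in [-\delta_2, \delta_2] \setminus \{0\}$. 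Translating this back to $\sigma$ and $\rho$, this says exactly that $|\sigma(-x)| \leq (\tfrac12 + \varepsilon)|\rho(x)|$ on $[-\delta_2, \delta_2]$, which is hypothesis (ii) with $M_\delta = \tfrac12 + \varepsilon$.

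With both hypotheses in place on $(0, \delta_2)$, Theorem \ref{bound} directly yields $|R_{j,j-1}| + |R_{j,j+1}| \leq (\tfrac12 + \varepsilon)|R_{j,j}|$ for every partition with step size $t_{j+1} - t_j \leq \delta := \delta_2$. There is no real obstacle here; everything boils down to the fact that the Taylor expansion in Proposition \ref{PropositionMdeltaHalf} has constant term $1/2$ together with the reality of $\tau$ coming from conjugation invariance. The only point one must be a little careful about is ensuring $\delta$ is chosen small enough to simultaneously guarantee the positivity of $\rho(x)$ and $-\rho(-x)$ required by Theorem \ref{bound} and the smallness of the remainder $\tau(x) - \tfrac12$; taking the minimum of the two candidate radii handles this.
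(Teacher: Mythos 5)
Your proposal is correct and follows essentially the same route as the paper's proof: establish positivity of $\rho(x)$ and $-\rho(-x)$ near $0$ from the local expansion (the paper does this via the derivatives of $\rho_{0}$ at $0$, you via the expansion of $\rho$ before Corollary \ref{CorOdd}, which is the same information), then use $\tau(x)\to\tfrac12$ as $x\to 0$ from Proposition \ref{PropositionMdeltaHalf} and apply Theorem \ref{ThmDom}. No substantive difference.
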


\begin{proof}
We know that $\Phi_{(\lambda_{_{0}},\lambda_{_{1}})}(x)$ and $\Phi
_{(\lambda_{_{2}},\lambda_{_{3}})}(x)$ are positive for all $x\in(0,\delta
_{0})$ for some $\delta_{0}>0.$ Further
\[
\rho_{0}(x)=\Phi_{(\lambda_{_{0}},...,\lambda_{_{3}})}^{\prime}(x)\Phi
_{(\lambda_{_{0}},\lambda_{_{1}})}(x)-\Phi_{(\lambda_{_{0}},...,\lambda_{_{3}%
})}(x)\Phi_{(\lambda_{_{0}},\lambda_{_{1}})}^{\prime}(x)
\]
is real-valued since $\Phi_{(\lambda_{_{0}},...,\lambda_{_{3}})}(x)$ is
real-valued. Since $\rho_{_{0}}^{^{(3)}}(0)=2$ and $\rho^{^{(j)}}(0)=0$ for
$j=0,1,2$ it follows that $\rho_{_{0}}(x)$ is positive on some interval
$(0,\delta_{_{1}})$ with $0<\delta_{_{1}}\leq\delta_{_{0}}$, hence $\rho(x)$
is positive on $(0,\delta_{_{1}})$. Since $\rho(x)$ has a simple zero $x=0$ we
conclude that $\rho(-x)$ must be negative for all $x\in(0,\delta_{2})$ for
sufficiently small $\delta_{2}>0.$ Thus $\rho(x)$ and $-\rho(-x)$ are both
positive on $(0,\delta_{_{2}})$. For given $\varepsilon>0$ we can choose
$\delta>0$ such that
\[
\left\vert \frac{\sigma(x)}{\rho(-x)}\right\vert \leq\frac{1}{2}+\varepsilon
\]
since we know that $\sigma(x)/(-\rho(-x))\rightarrow\frac{1}{2}$ for
$x\rightarrow0$. Now apply Theorem \ref{ThmDom}.
\end{proof}

\begin{corollary}
Under the assumptions of Theorem \ref{TheoremDIAGDominant}, for $\varepsilon
<1/2$ the matrix $R$ is non-singular.
\end{corollary}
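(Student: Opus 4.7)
The plan is to combine Theorem \ref{TheoremDIAGDominant} with the Levy--Desplanques theorem recalled at the end of Section 3. Given $\varepsilon < 1/2$, Theorem \ref{TheoremDIAGDominant} supplies a $\delta > 0$ such that for any nodes satisfying $t_{j+1}-t_j \leq \delta$ we have
\[
|R_{j,j-1}| + |R_{j,j+1}| \leq \left(\tfrac{1}{2}+\varepsilon\right)|R_{j,j}|.
\]
Since $\tfrac{1}{2}+\varepsilon < 1$, this is precisely the condition that $R$ is strictly diagonally dominant on its row $j$. The matrix is tridiagonal, so the rows $j=1$ and $j=n-2$ each contain only one off-diagonal entry, and the same bound trivially applies (with one of the two terms on the left being zero). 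Thus strict diagonal dominance holds row by row.

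By the Levy--Desplanques theorem (\cite{HornJohnson}, Theorem 6.1.10) any strictly diagonally dominant matrix is invertible, and so $R$ is non-singular. This is essentially a one-line corollary; the only thing to check is that the hypothesis $\varepsilon<1/2$ is really what is needed to force the diagonal dominance ratio strictly below $1$, which it does.

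I do not anticipate any obstacle: the real work was already done in Theorem \ref{TheoremDIAGDominant}, where the limit $\sigma(x)/(-\rho(-x)) \to 1/2$ as $x\to 0$ was used to control the ratio. The corollary simply packages that estimate, for any subcritical $\varepsilon$, into the invertibility statement via Levy--Desplanques.
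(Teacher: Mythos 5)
Your proposal is correct and is exactly the argument the paper intends (the corollary is stated without proof, with the Levy--Desplanques theorem already cited at the end of Section 3 for this purpose); the only implicit point worth noting is that $R_{j,j}\neq 0$, which follows from the positivity of $\rho(x)$ and $-\rho(-x)$ established in the proof of Theorem \ref{TheoremDIAGDominant}, so the bound $\left(\tfrac{1}{2}+\varepsilon\right)\left\vert R_{j,j}\right\vert$ with $\tfrac{1}{2}+\varepsilon<1$ indeed gives strict diagonal dominance.
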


The following example shows that in the case of real exponents one can not
expect that the matrix $R$ is (strictly) diagonally dominant for all
partitions $t_{_{1}}<....<t_{_{n}}.$ Indeed, the function $\tau$ can be
unbounded, can have local maxima and local extrema as the following example shows:

\begin{example}
\label{ExampleNEW1}$\Lambda_{3}=\left(  3,3,-1,-1\right)  .$ Then
$\Phi_{\left(  3,3\right)  }\left(  x\right)  =xe^{3x}$ and $\Phi_{\left(
-1,-1\right)  }\left(  x\right)  =xe^{-x}$ and it is easy to check that
\[
\Phi_{\left(  3,3,-1,-1\right)  }\left(  x\right)  =\frac{1}{32}\left(
-e^{3x}+2xe^{3x}+e^{-x}+2xe^{-x}\right)  .
\]
A computation shows that
\[
\rho_{0}\left(  x\right)  =-\frac{1}{4}x^{2}e^{2x}-\frac{1}{8}xe^{2x}+\frac
{1}{32}e^{6x}-\frac{1}{32}e^{2x}\allowbreak
\]
Then
\[
\tau\left(  x\right)  =\frac{-\sigma_{0}\left(  -x\right)  e^{\left(
\lambda_{0}+\cdots+\lambda_{3}\right)  x}}{\rho_{0}\left(  x\right)  }%
=\frac{1}{32}\frac{-\left(  -e^{-3x}-2xe^{-3x}+e^{x}-2xe^{x}\right)  e^{4x}%
}{-\frac{1}{4}x^{2}e^{2x}-\frac{1}{8}xe^{2x}+\frac{1}{32}e^{6x}-\frac{1}%
{32}e^{2x}\allowbreak}.
\]
has a local minimum at $x=0$ and a local maximum at some point $x_{0}%
\in\left(  0.7,1\right)  $. Further $\tau\left(  x\right)  $ is unbounded for
$x\rightarrow-\infty$ and $\tau\left(  x\right)  \rightarrow0$ for
$x\rightarrow0.$

\begin{center}
\fbox{\includegraphics[
height=2.8442in,
width=2.8885in,
keepaspectratio
]%
{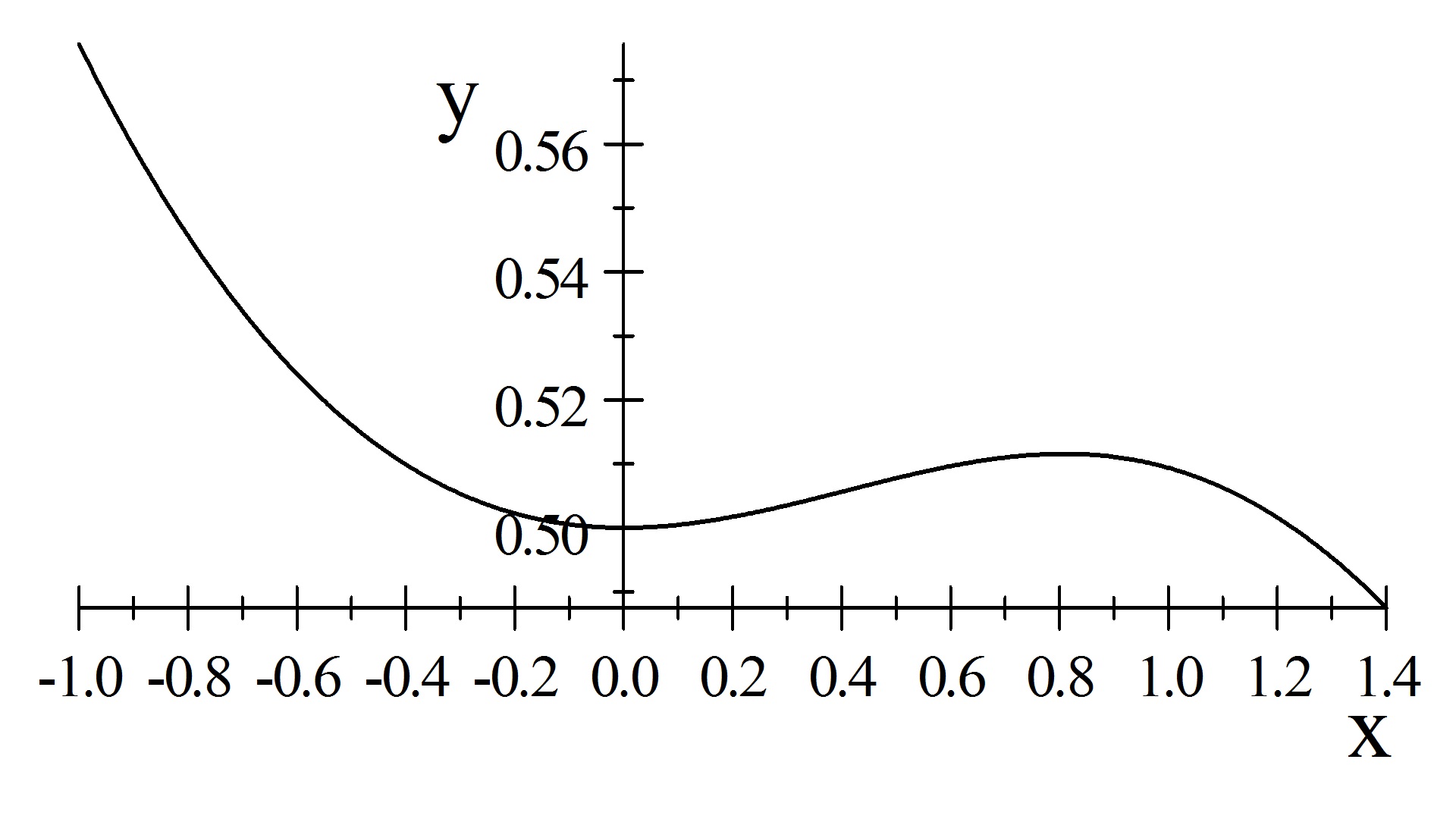}%
}\\
Fig. 1: Graph of $\tau\left(  x\right)  =-\sigma\left(  -x\right)
/\rho\left(  x\right)  $%
\end{center}

\end{example}

On the other hand, if we assume $\lambda_{0}<0<\lambda_{1}$ there is a
substantial change in the behavior of $\rho.$ In the next example we take
$\Lambda_{3}=\left(  -1,3,-1,3\right)  $ which leads to the same spline space
as in \ref{ExampleNEW1} but provides a different definition of a natural spline:

\begin{example}
Let $\Lambda_{3}=\left(  -1,3,-1,3\right)  .$ Then $\Phi_{\left(  -1,3\right)
}\left(  x\right)  =\frac{e^{3x}-e^{-x}}{4}$ and a computation shows that
$\ $
\[
\frac{-\sigma\left(  -x\right)  }{\rho\left(  x\right)  }=\frac{-\frac{1}%
{32}\left(  -e^{-3x}-2xe^{-3x}+e^{x}-2xe^{x}\right)  e^{4x}}{\frac{1}%
{64}e^{6x}-\frac{1}{64}e^{-2x}-\frac{1}{8}xe^{2x}}.
\]
In this case $R$ is strictly diagonally dominant for all partitions
$t_{1}<...<t_{n}.$

\begin{center}
\fbox{\includegraphics[
height=2.8655in,
width=2.8699in,
keepaspectratio
]%
{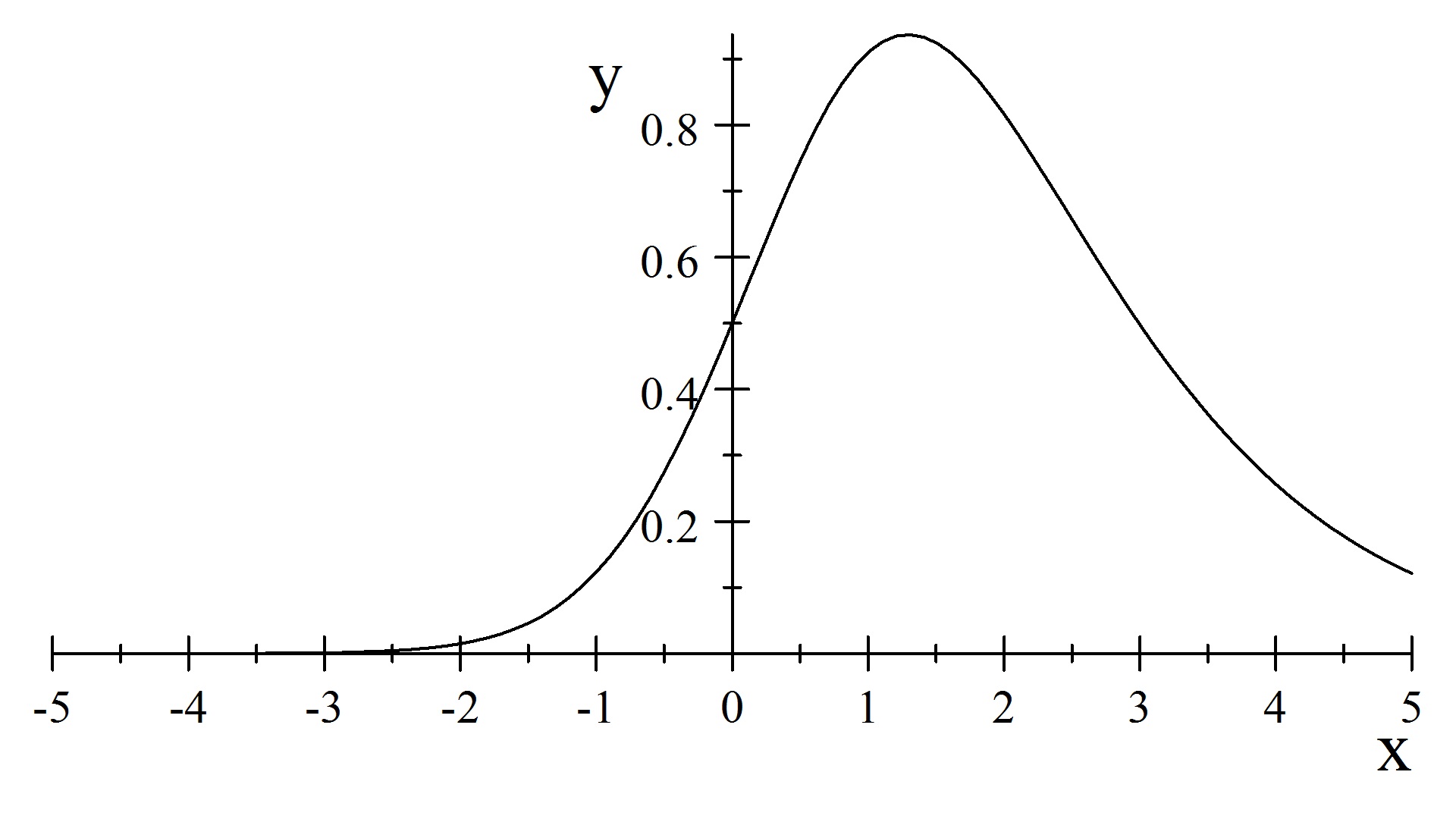}%
}\\
Fig. 2: Graph of $\tau\left(  x\right)  =$ $-\sigma(-x)/\rho(x)$%
\end{center}

\end{example}

\section{Diagonal dominance of $R$ for special exponents}

We will use the Taylor expansion of $\tau\left(  x\right)  $ in Proposition
\ref{PropositionMdeltaHalf} to show the following result:

\begin{proposition}
\label{ThmMAX}Let $\left(  \lambda_{_{0}},\lambda_{_{1}},\lambda_{_{2}%
},\lambda_{_{3}}\right)  \in\mathbb{R}^{4}.$ Then the function
\[
\tau\left(  x\right)  =\frac{-\sigma(-x)}{\rho(x)}\
\]
has a local maximum at $x=0$ provided that $\lambda_{0}+\lambda_{1}=-3\left(
\lambda_{2}+\lambda_{3}\right)  $ and
\[
\lambda_{0}^{2}-4\lambda_{0}\lambda_{1}+\lambda_{1}^{2}+3\left(  \lambda
_{2}^{2}+\lambda_{3}^{2}\right)  >0.
\]
In particular, if $\lambda_{0}\leq0\leq\lambda_{1}$ or $\lambda_{1}%
^{2}+\lambda_{0}^{2}<\lambda_{2}^{2}+\lambda_{3}^{2}$ this condition is satisfied.
\end{proposition}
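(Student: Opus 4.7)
The plan is to apply Proposition \ref{PropositionMdeltaHalf}, which already supplies the Taylor polynomial of $\tau(x)$ at $x=0$:
\[
\tau(x)=\frac{1}{2}+\frac{1}{8}\left(\frac{3A}{2}-C\right)x+\frac{1}{8}\left(\frac{7A^{2}}{16}-\frac{AC}{2}+\frac{C^{2}}{4}-\frac{B}{5}\right)x^{2}+O(x^{3}),
\]
with $A=\lambda_{0}+\lambda_{1}+\lambda_{2}+\lambda_{3}$ and $C=\lambda_{0}+\lambda_{1}$. Under the hypothesis that the $\lambda_{j}$ are real, $\tau$ is real-analytic near $0$, so a strict local maximum at $x=0$ will follow as soon as (i) the coefficient of $x$ vanishes and (ii) the coefficient of $x^{2}$ is strictly negative.

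First I would record that (i) is exactly the relation $3A=2C$, i.e.\ $3(\lambda_{0}+\lambda_{1}+\lambda_{2}+\lambda_{3})=2(\lambda_{0}+\lambda_{1})$, which rearranges to the first hypothesis $\lambda_{0}+\lambda_{1}=-3(\lambda_{2}+\lambda_{3})$. Then, substituting $A=2C/3$ together with the identity $B=\tfrac{1}{2}A^{2}+\tfrac{1}{2}S$ from (\ref{eqDefB2}), where $S=\lambda_{0}^{2}+\lambda_{1}^{2}+\lambda_{2}^{2}+\lambda_{3}^{2}$, one reduces the quadratic coefficient to a simple expression. A short calculation (collect $A^{2}=4C^{2}/9$ and $AC=2C^{2}/3$, put everything over a common denominator) should yield
\[
\frac{7A^{2}}{16}-\frac{AC}{2}+\frac{C^{2}}{4}-\frac{B}{5}=\frac{2C^{2}-3S}{30}.
\]
Hence the coefficient of $x^{2}$ in $\tau(x)$ is $(2C^{2}-3S)/240$, which is negative precisely when $3S>2C^{2}$; expanding $C^{2}=(\lambda_{0}+\lambda_{1})^{2}$ and simplifying gives the second hypothesis
\[
\lambda_{0}^{2}-4\lambda_{0}\lambda_{1}+\lambda_{1}^{2}+3(\lambda_{2}^{2}+\lambda_{3}^{2})>0.
\]

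Finally I would verify the two \emph{in particular} cases. If $\lambda_{0}\leq 0\leq\lambda_{1}$, then $-4\lambda_{0}\lambda_{1}\geq 0$, so the left-hand side is at least $\lambda_{0}^{2}+\lambda_{1}^{2}+3(\lambda_{2}^{2}+\lambda_{3}^{2})$, which is positive unless all four frequencies vanish (the purely polynomial case, which one can treat separately or exclude). If instead $\lambda_{0}^{2}+\lambda_{1}^{2}<\lambda_{2}^{2}+\lambda_{3}^{2}$, use the elementary bound $-4\lambda_{0}\lambda_{1}\geq-2(\lambda_{0}^{2}+\lambda_{1}^{2})$ to get
\[
\lambda_{0}^{2}-4\lambda_{0}\lambda_{1}+\lambda_{1}^{2}+3(\lambda_{2}^{2}+\lambda_{3}^{2})\geq-(\lambda_{0}^{2}+\lambda_{1}^{2})+3(\lambda_{2}^{2}+\lambda_{3}^{2})>2(\lambda_{0}^{2}+\lambda_{1}^{2})\geq 0,
\]
with strictness coming from the assumed inequality. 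The only real obstacle is the algebraic simplification that produces the clean form $(2C^{2}-3S)/30$; once that identity is in hand, everything else is a matter of rewriting. The reason to expect such cancellation is precisely that the first-order vanishing condition $A=2C/3$ should align many of the competing terms, so I would proceed confidently through the substitution rather than look for a conceptual shortcut.
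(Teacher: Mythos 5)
Your proposal is correct and follows essentially the same route as the paper: both vanish the linear Taylor coefficient from Proposition \ref{PropositionMdeltaHalf} to get $\lambda_{0}+\lambda_{1}=-3(\lambda_{2}+\lambda_{3})$, reduce the quadratic coefficient via $B=\tfrac12 A^{2}+\tfrac12 S$ to the sign of $3S-2C^{2}=\lambda_{0}^{2}-4\lambda_{0}\lambda_{1}+\lambda_{1}^{2}+3(\lambda_{2}^{2}+\lambda_{3}^{2})$, and then check the two special cases (the paper uses a sum-of-squares identity for the second case where you use $-4\lambda_{0}\lambda_{1}\geq-2(\lambda_{0}^{2}+\lambda_{1}^{2})$, and it likewise defers the all-zero polynomial case to the known result). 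Your simplification $\frac{7A^{2}}{16}-\frac{AC}{2}+\frac{C^{2}}{4}-\frac{B}{5}=\frac{2C^{2}-3S}{30}$ checks out and matches the paper's form $\frac{1}{5}\left(\frac{5}{9}C^{2}-B\right)$.
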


\begin{proof}
From the Taylor expansion of $\tau\left(  x\right)  $ we see that $x=0\ $is
critical point if and only if
\[
0=3A-2C=3\left(  \lambda_{0}+\lambda_{1}+\lambda_{2}+\lambda_{3}\right)
-2\left(  \lambda_{0}+\lambda_{1}\right)  =\lambda_{0}+\lambda_{1}+3\left(
\lambda_{2}+\lambda_{3}\right)  .
\]
Thus we have $A=\frac{2}{3}C.$ The second Taylor coefficient is given by%
\[
a_{2}:=\frac{1}{8}\allowbreak\left(  \frac{7A^{2}}{16}-\frac{AC}{2}%
+\frac{C^{2}}{4}-\frac{B}{5}\right)  =\frac{1}{8\cdot5}\allowbreak\left(
\frac{5}{9}C^{2}-B\right)  .
\]
Further
\[
B=\frac{1}{2}A^{2}+\frac{1}{2}\left(  \lambda_{0}^{2}+\lambda_{1}^{2}%
+\lambda_{2}^{2}+\lambda_{3}^{2}\right)  =\frac{2}{9}C^{2}+\frac{1}{2}\left(
\lambda_{0}^{2}+\lambda_{1}^{2}+\lambda_{2}^{2}+\lambda_{3}^{2}\right)  .
\]
Thus $a_{2}$ is negative if $\frac{1}{3}C^{2}<\frac{1}{2}\left(  \lambda
_{0}^{2}+\lambda_{1}^{2}+\lambda_{2}^{2}+\lambda_{3}^{2}\right)  .$ This is
equivalent to the positivity of
\[
M:=3\left(  \lambda_{0}^{2}+\lambda_{1}^{2}+\lambda_{2}^{2}+\lambda_{3}%
^{2}\right)  -2\left(  \lambda_{0}+\lambda_{1}\right)  ^{2}=\lambda_{0}%
^{2}-4\lambda_{0}\lambda_{1}+\lambda_{1}^{2}+3\left(  \lambda_{2}^{2}%
+\lambda_{3}^{2}\right)  .
\]
When $\lambda_{0}\leq0\leq\lambda_{1}$then $-\lambda_{0}\lambda_{1}\geq0$ we
see that $M$ is positive if some $\lambda_{0},...,\lambda_{3}\neq0.$ In the
case $\lambda_{0}=\lambda_{1}=\lambda_{2}=\lambda_{3}=0$ we have the
polynomial case and the statement is well known. Next let us add the two
equations
\begin{align*}
M  &  =\left(  \lambda_{0}-2\lambda_{1}\right)  ^{2}-3\lambda_{1}^{2}+3\left(
\lambda_{2}^{2}+\lambda_{3}^{2}\right) \\
M  &  =\left(  \lambda_{1}-2\lambda_{0}\right)  ^{2}-3\lambda_{0}^{2}+3\left(
\lambda_{2}^{2}+\lambda_{3}^{2}\right)
\end{align*}
and we see that
\[
2M=\left(  \lambda_{0}-2\lambda_{1}\right)  ^{2}+\left(  \lambda_{1}%
-2\lambda_{0}\right)  ^{2}+3\left(  \lambda_{2}^{2}+\lambda_{3}^{2}%
-\lambda_{1}^{2}-\lambda_{0}^{2}\right)  .
\]
Thus $\lambda_{1}^{2}+\lambda_{0}^{2}<\lambda_{2}^{2}+\lambda_{3}^{2}$ implies
that $M$ is positive.
\end{proof}

\begin{remark}
Example \ref{ExampleNEW1} provides a function $\tau$ which has a local minimum
at $x=0.$
\end{remark}

Next we consider a class of examples which occur naturally in the study of polysplines.

\begin{proposition}
Let $a$ and $b$ be two non-zero complex numbers such that $a^{2}\neq b^{2}$
and $(\lambda_{_{0}},...,\lambda_{_{3}})=(a,-a,-b,b)$. Then
\begin{equation}
\rho_{0}\left(  x\right)  =\frac{(b-a)\sinh(a+b)x-(a+b)\sinh(b-a)x}{2ab\left(
b^{2}-a^{2}\right)  }. \label{represent_2}%
\end{equation}
If $a$ and $b$ are real then the functions $\rho\left(  x\right)  $ and
$-\rho\left(  -x\right)  $ are positive for all $x>0$.
\end{proposition}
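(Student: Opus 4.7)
The plan is to derive formula (\ref{represent_2}) by direct computation and then deduce the positivity assertions from monotonicity of $y \mapsto \sinh(y)/y$. First I would apply the integral representation (\ref{defPhi}) together with the partial-fraction identity
\[
\frac{1}{(z^2-a^2)(z^2-b^2)} = \frac{1}{a^2-b^2}\left(\frac{1}{z^2-a^2} - \frac{1}{z^2-b^2}\right),
\]
valid since $a^2\neq b^2$, and use $\Phi_{(c,-c)}(x) = \sinh(cx)/c$ to obtain the closed form
\[
\Phi_{\Lambda_3}(x) = \frac{1}{b^2-a^2}\left(\frac{\sinh(bx)}{b} - \frac{\sinh(ax)}{a}\right).
\]
Combined with $\Phi_{(a,-a)}(x) = \sinh(ax)/a$ and $\Phi_{(a,-a)}'(x) = \cosh(ax)$, substitution into (\ref{eqDefrhonull}) makes the two $\sinh(ax)\cosh(ax)$ contributions cancel, leaving
\[
\rho_0(x) = \frac{a\sinh(bx)\cosh(ax) - b\sinh(ax)\cosh(bx)}{ab(a^2-b^2)}.
\]
The product-to-sum identity $\sinh X\cosh Y = \tfrac{1}{2}[\sinh(X+Y)+\sinh(X-Y)]$ applied to each term and collected then yields precisely (\ref{represent_2}).

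For the positivity claim I would exploit the evident symmetries of (\ref{represent_2}): numerator and denominator both change sign under each of $a\mapsto -a$, $b\mapsto -b$, and $a\leftrightarrow b$, so the ratio is invariant under all three operations. These symmetries are compatible with permuting $(\lambda_0,\lambda_1)$ and $(\lambda_2,\lambda_3)$ internally, which does not alter $\rho_0$ anyway. Hence, given that $a,b$ are real with $a,b\neq 0$ and $a^2\neq b^2$, I may assume $b>a>0$. Then $2ab(b^2-a^2)>0$ and
\[
\Phi_{(a,-a)}(x)\Phi_{(-b,b)}(x) = \frac{\sinh(ax)\sinh(bx)}{ab} > 0 \quad \text{for } x>0,
\]
so the sign of $\rho(x)$ equals the sign of the numerator $(b-a)\sinh((a+b)x) - (a+b)\sinh((b-a)x)$. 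The strict monotonicity of $y\mapsto \sinh(y)/y = \sum_{k\geq 0} y^{2k}/(2k+1)!$ on $(0,\infty)$, applied at $y=(a+b)x$ and $y=(b-a)x$, gives
\[
\frac{\sinh((a+b)x)}{a+b} > \frac{\sinh((b-a)x)}{b-a};
\]
multiplying through by $(a+b)(b-a)>0$ delivers exactly the required strict positivity of the numerator, hence $\rho(x)>0$ for $x>0$.

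Finally, every term in the numerator of (\ref{represent_2}) is odd in $x$, so $\rho_0$ is odd; since $\Phi_{(a,-a)}$ and $\Phi_{(-b,b)}$ are also odd their product is even, and thus $\rho$ is odd as a quotient of an odd function by an even one. Therefore $-\rho(-x)=\rho(x)>0$ for all $x>0$, which completes the argument. The only non-trivial labour is the trigonometric simplification producing (\ref{represent_2}); I do not anticipate any substantive obstacle beyond book-keeping.
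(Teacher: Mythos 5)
Your derivation of (\ref{represent_2}) is correct and coincides in substance with the paper's: the same closed forms $\Phi_{(a,-a)}(x)=\sinh(ax)/a$, $\Phi_{(a,-a,-b,b)}(x)=\frac{1}{b^{2}-a^{2}}\bigl(\frac{\sinh bx}{b}-\frac{\sinh ax}{a}\bigr)$, the same cancellation in $\rho_{0}$, and the same product-to-sum step. Where you genuinely diverge is the positivity part. The paper argues by zero counting: for real $a,b$ the function $\rho_{0}$ lies in the four-dimensional exponential space with real frequencies $\pm(a+b),\pm(b-a)$, hence (extended Chebyshev system) has at most $3$ real zeros; since it has a triple zero at $x=0$ it cannot vanish elsewhere, and the sign for $x>0$ and $x<0$ is then read off from the local behaviour $\rho_{0}(x)\sim x^{3}/3$. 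You instead prove an explicit strict inequality: after the symmetry reduction to $b>a>0$ (legitimate, since the quotient $\rho$ is unchanged under $a\mapsto-a$, $b\mapsto-b$ and $a\leftrightarrow b$, as both (\ref{represent_2}) and the even denominator $\sinh(ax)\sinh(bx)/(ab)$ show — note the last swap interchanges the two pairs, so its harmlessness is read off from the formula rather than from the definition of $\rho_{0}$), strict monotonicity of $y\mapsto\sinh(y)/y$ gives $(b-a)\sinh((a+b)x)>(a+b)\sinh((b-a)x)$ for $x>0$, and oddness of $\rho$ (odd $\rho_{0}$ over even denominator, consistent with the symmetric vector $\Lambda_{3}$) converts $\rho(x)>0$ into $-\rho(-x)>0$. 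Your route is more elementary and quantitative (it produces the positivity of the numerator directly, with no appeal to Chebyshev-system zero bounds), while the paper's argument is shorter given the machinery already in place and is the template it reuses elsewhere (e.g.\ in Theorem \ref{TheoremSYMMETRICreal}); both are sound.
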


\begin{proof}
It is easy to see that $\Phi_{(a,-a)}(x)=\frac{1}{a}\sinh ax$ and
$\Phi_{(b,-b)}(x)=\frac{1}{b}\sinh bx$ and
\begin{equation}
\Phi_{(a,-a,b,-b)}(x)=\frac{1}{b^{2}-a^{2}}\left(  \frac{1}{b}\sinh
bx-\frac{1}{a}\sinh ax\right)  . \label{eqphiex}%
\end{equation}
A computation shows that the numerator $\rho_{0}\left(  x\right)  $ of
$\rho\left(  x\right)  $ in (\ref{eqDefr}) is given by
\[
\rho_{0}\left(  x\right)  =\frac{b\sinh ax\cosh bx-a\cosh ax\sinh
bx}{ab\left(  b^{2}-a^{2}\right)  }%
\]
Since $\cosh(bx)\sinh(ax)=\frac{1}{2}\sinh(ax+bx)+\frac{1}{2}\sinh(ax-bx)$ we
obtain%
\begin{equation}
\rho_{0}\left(  x\right)  =\frac{(b-a)\sinh(a+b)x-(a+b)\sinh(b-a)x}{2ab\left(
b^{2}-a^{2}\right)  }. \label{eqrhoex}%
\end{equation}
If $a$ and $b$ are real then $\rho_{0}\left(  x\right)  $ is an exponential
polynomial with real frequencies $\pm\left(  a+b\right)  $ and $\pm\left(
b-a\right)  $, and it has therefore at most $3$ real zeros. Since $\rho_{0}$
has obviously a zero of order $3$ at $x=0$ it follows that $\rho\left(
x\right)  \neq0$ for all real $x\neq0,$ and it is now easy to see
that$\rho\left(  x\right)  $ and $-\rho\left(  -x\right)  $ are positive for
all $x>0.$
\end{proof}

Now we present the main result of this section:

\begin{theorem}
\label{TheoremSYMMETRICreal} Let $a,b\geq0$ and $\Lambda_{3}=(a,-a,-b,b)$.
Then the following estimate holds
\[
R_{_{j,j-1}}+R_{_{j,j+1}}\leq\frac{1}{2}R_{_{j,j}}%
\]
for all $t_{_{1}}<t_{_{2}}<...<t_{_{n}}$.
\end{theorem}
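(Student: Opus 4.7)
The plan is to exploit the symmetry $-\Lambda_3=\Lambda_3$ (up to permutation) to collapse the matrix inequality into a single pointwise inequality between $\sigma$ and $\rho$, and then reduce that in turn to the monotonicity of an elementary one-variable function.

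First I would observe that for $\Lambda_3=(a,-a,-b,b)$ we have $\lambda_0+\lambda_1=0=\lambda_2+\lambda_3$, so (\ref{eqsym}) makes $\Phi_{(\lambda_0,\lambda_1)}$ and $\Phi_{(\lambda_2,\lambda_3)}$ odd; since $N=3$ is odd and $-\Lambda_3=\Lambda_3$, also $\Phi_{\Lambda_3}$ is odd, and consequently $\Phi'_{\Lambda_3}$ and $\Phi'_{(\lambda_0,\lambda_1)}$ are even. Substituting $x\mapsto-x$ in (\ref{eqDefw}) and (\ref{eqDefr}) then shows that both $\rho$ and $\sigma$ are odd. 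Writing $h_j:=t_{j+1}-t_j$, Theorem \ref{r_jj} therefore simplifies to
\[
R_{j,j}=\rho(h_{j-1})+\rho(h_j),\qquad R_{j,j-1}+R_{j,j+1}=\sigma(h_{j-1})+\sigma(h_j),
\]
so it suffices to prove the pointwise inequality $\sigma(h)\le\tfrac12\rho(h)$ for every $h>0$.

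Next I would make $\rho$ and $\sigma$ explicit. Assuming $0\le a<b$ (the case $a=b$ being handled by continuity of the parameters), one has $\Phi_{(a,-a)}(x)=\sinh(ax)/a$, $\Phi_{(-b,b)}(x)=\sinh(bx)/b$, and by (\ref{eqphiex}) an analogous closed form for $\Phi_{\Lambda_3}$. A short computation (using the addition formula for $\sinh(a\pm b)x$ in the numerator $\rho_0$ of $\rho$) yields
\[
\rho(x)=\frac{b\coth bx-a\coth ax}{b^2-a^2},\qquad \sigma(x)=\frac{1}{b^2-a^2}\left(\frac{a}{\sinh ax}-\frac{b}{\sinh bx}\right).
\]
Multiplying the target inequality $2\sigma(x)\le\rho(x)$ through by the positive quantity $x(b^2-a^2)$ and grouping like terms reduces it to
\[
ax\cdot\frac{2+\cosh ax}{\sinh ax}\le bx\cdot\frac{2+\cosh bx}{\sinh bx},
\]
that is, $g(ax)\le g(bx)$ where $g(u):=u(2+\cosh u)/\sinh u$.

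Since $a\le b$ and $x>0$, the final step is to show that $g$ is non-decreasing on $(0,\infty)$. A direct differentiation gives
\[
g'(u)=\frac{k(u)}{\sinh^2 u},\qquad k(u):=(2+\cosh u)\sinh u-u(1+2\cosh u),
\]
with $k(0)=0$ and, using $\cosh^2 u-1=\sinh^2 u$,
\[
k'(u)=\sinh^2 u+\cosh^2 u-1-2u\sinh u=2\sinh u(\sinh u-u),
\]
which is strictly positive for $u>0$ because $\sinh u>u$. Hence $k>0$ on $(0,\infty)$, so $g$ is strictly increasing, giving $g(ax)\le g(bx)$ and completing the proof.

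I expect the main obstacle to be the algebraic manipulation in Step 2: finding the right grouping that turns $2\sigma\le\rho$ into the monotonicity of the single function $g(u)=u(2+\cosh u)/\sinh u$. Once that reduction is in hand, Step 3 is essentially just $\sinh u>u$, and the degenerate cases ($a=b$, or $a=0$, or $b=0$) are absorbed by continuity of $\rho$ and $\sigma$ in the parameters $(a,b)$, with $a=b=0$ recovering the classical polynomial identity $\sigma=\rho/2$.
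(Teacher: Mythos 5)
Your proof is correct, and it takes a genuinely different route from the paper. The paper never works with closed hyperbolic formulas for $\rho$ and $\sigma$ directly: it studies the quotient $\tau(x)=-\sigma(-x)/\rho(x)$, shows via the Taylor expansion (Propositions \ref{PropositionMdeltaHalf} and \ref{ThmMAX}) that $\tau$ has a local maximum $\tfrac12$ at $x=0$, shows $\tau(x)\to0$ at infinity, and then rules out any loss of monotonicity by a zero-counting argument: if $\tau$ were not decreasing on $(0,\infty)$, a level $C$ would be attained at three positive points, and by evenness at their negatives, so $G(x)=C\rho_{0}(x)-\Phi_{(-\lambda_{0},\ldots,-\lambda_{3})}(x)$ would have at least $9$ zeros while lying in an $8$-dimensional exponential space with frequencies $\pm(a+b),\pm(b-a),\pm a,\pm b$ (hence at most $7$ zeros); the conclusion then comes through Theorem \ref{ThmDom}, with the cases $a=b$, $(0,0,b,-b)$ and $(b,-b,0,0)$ treated separately. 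You instead exploit the oddness of $\rho$ and $\sigma$ (valid here since $\lambda_{0}+\lambda_{1}=\lambda_{2}+\lambda_{3}=0$ and $\Lambda_{3}$ is symmetric) to reduce the matrix inequality to the single pointwise bound $2\sigma(h)\le\rho(h)$, compute $\rho(x)=\frac{b\coth bx-a\coth ax}{b^{2}-a^{2}}$ and $\sigma(x)=\frac{1}{b^{2}-a^{2}}\bigl(\frac{a}{\sinh ax}-\frac{b}{\sinh bx}\bigr)$ explicitly, and reduce everything to the monotonicity of $g(u)=u(2+\cosh u)/\sinh u$, which your differentiation ($k'(u)=2\sinh u(\sinh u-u)>0$) establishes correctly; the degenerate cases $a=b$, $a=0$ or $b=0$ indeed follow by continuity of the entries in the frequencies since the inequality is non-strict, which even spares the appeal the paper makes to \cite{KounchevRenderTsachevBIT} for $0<a=b$. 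What each approach buys: the paper's Chebyshev-system zero count is more structural and is the template reused for the trigonometric case $(0,0,-i\beta,i\beta)$, whereas your argument is shorter, completely elementary and self-contained (it bypasses Theorem \ref{ThmDom} and the Taylor machinery), and it yields the sharper pointwise statement $\sigma(h)\le\tfrac12\rho(h)$ for every individual step length $h$, with the constant $\tfrac12$ attained only as $h\to0$.
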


\begin{proof}
1. If $a=b=0$ this is known from the polynomial case. If $0<a=b$ this follows
from the results in \cite{KounchevRenderTsachevBIT}.

2. Now let us assume that $a,b>0$ and $a\neq0$. Since $A=\lambda
_{0}+...+\lambda_{3}=0$ Proposition \ref{PropositionMdeltaHalf} shows that
\[
\tau\left(  x\right)  =\frac{-\sigma\left(  -x\right)  }{\rho\left(  x\right)
}=\frac{\Phi_{\left(  -\lambda_{0},....,-\lambda_{3}\right)  }\left(
x\right)  }{\rho_{0}(x)}=\frac{\Phi_{(a,-a,b,-b)}(x)}{\rho_{0}(x)}.
\]
Further (\ref{eqphiex}) and (\ref{eqrhoex}) show that
\[
\tau\left(  x\right)  =\frac{2ab\left(  \frac{1}{b}\sinh bx-\frac{1}{a}\sinh
ax\right)  }{(b-a)\sinh(a+b)x-(a+b)\sinh(b-a)x}.
\]
Since $a+b>\max\left\{  a,b\right\}  $ we see that $\tau\left(  x\right)
\rightarrow0$ for $x\rightarrow\infty.$

3. Our assumptions imply that $\lambda_{0}+\lambda_{1}=0=-3\left(  \lambda
_{2}+\lambda_{3}\right)  .$ Then Proposition \ref{ThmMAX} shows that
$\tau\left(  x\right)  \leq\frac{1}{2}=\tau\left(  0\right)  $ for all $x$ in
a neighborhood of $0$ and $\tau\left(  x\right)  $ is strictly decreasing for
small $x>0.$ Let $\left(  0,\alpha\right)  $ be the maximal interval such that
$\tau^{\prime}\left(  x\right)  <0,$ so $\tau\left(  x\right)  $ is decreasing
on $\left(  0,\alpha\right)  .$ Assume that $\alpha<\infty.$ Then
$\tau^{\prime}\left(  \alpha\right)  =0,$ and $\tau$ is increasing on some
interval $\left(  \alpha,\alpha+\delta\right)  .$ Since $\tau\left(  x\right)
\rightarrow0$ for $x\rightarrow\infty$ we see that there exists $C>0$ such
that $0<x_{1}<\alpha<x_{2}<x_{3}$ with $\tau\left(  x_{j}\right)  =C$ for
$j=1,2,3.$ Since $\tau$ is obviously even, we have as well $\tau\left(
-x_{j}\right)  =C$ for $j=1,2,3$. We consider
\[
G\left(  x\right)  :=C\rho_{0}\left(  x\right)  -\Phi_{\left(  A-\lambda
_{0},....,A-\lambda_{3}\right)  }\left(  x\right)  .
\]
Then $G$ has a zero of order $3$ at $x=0$ and the zeros $x_{1},x_{3}%
,x_{3},-x_{1},-x_{2},-x_{3},$ so the function $G$ has at least $9$ zeros
(including multiplicities). But $G$ is an exponential polynomial with
frequencies $\pm\left(  a+b\right)  ,\pm\left(  b-a\right)  ,\pm b,\pm a,$
which has dimension $8,$ so $G$ can have at most $7$ zeros. This contradiction
shows that $\alpha=\infty,$ so $\tau\left(  x\right)  $ is decreasing for all
$x>0.$ Since $\tau$ is even we see that
\[
0\leq\tau\left(  x\right)  =\frac{-\sigma\left(  -x\right)  }{\rho\left(
x\right)  }\leq\frac{1}{2}.
\]
Now apply Theorem \ref{ThmDom}.

4. In the case that $\left(  0,0,b-b\right)  $ we have $\Phi_{(0,0)}(x)=x$,
and
\[
\text{ }\Phi_{(-b,b)}(x)=\frac{1}{b}\sinh bx\text{ and }\Phi_{(0,0,-b,b)}%
=\frac{1}{b^{2}}\left(  \frac{1}{b}\sinh bx-x\right)  .
\]
A computation shows that
\[
\rho_{0}\left(  x\right)  =\frac{1}{b^{3}}\left(  bx\cosh bx-\sinh bx\right)
.
\]
and $\rho_{0}$ an exponential polynomial which depends on the frequencies
$\left(  b,-b,b,-b\right)  .$ Further
\[
\tau\left(  x\right)  =\frac{\sinh bx-bx}{bx\cosh bx-\sinh bx}%
\]
is an even function and $\tau\left(  x\right)  \rightarrow0$ for
$x\rightarrow\infty.$ Now argue as in point $3.$

5. In the case $\left(  b,-b,0,0\right)  $ a computation shows that $\rho
_{0}\left(  x\right)  $ and $\tau\left(  x\right)  $ are identical to the
functions in item 4.
\end{proof}

The following example shows the surprising fact that the function $\tau$ may
have a local maximum at $x=0$ when $\Lambda_{3}$ is not symmetric.

\begin{example}
Let $\Lambda_{3}=\left(  -1,4,-2,1\right)  $. Then $\Phi_{\left(  -1,4\right)
}=\frac{e^{4x}-e^{-x}}{5}$ and $\Phi_{\left(  -2,1\right)  }\left(  x\right)
=\frac{e^{x}-e^{-2x}}{3}$ and it is easy to verify that%
\[
\Phi_{\left(  -1,4,-2,1\right)  }\left(  x\right)  =\frac{1}{10}e^{-x}%
-\frac{1}{18}e^{x}-\frac{1}{18}e^{-2x}+\frac{1}{90}e^{4x}\allowbreak.
\]
Then
\[
\tau\left(  x\right)  =\frac{-\sigma\left(  -x\right)  }{\rho_{0}\left(
x\right)  }=\frac{\left(  \frac{1}{10}e^{-x}-\frac{1}{18}e^{x}-\frac{1}%
{18}e^{-2x}+\frac{1}{90}e^{4x}\allowbreak\right)  e^{-2x}}{\frac{1}{15}%
e^{-2x}-\frac{1}{90}e^{3x}-\frac{1}{9}e^{-3x}+\frac{1}{30}e^{-5x}%
+\allowbreak\frac{1}{45}}\ .
\]
Then $\tau\left(  x\right)  $ has a local maximum at $x=0$ and $\tau\left(
x\right)  $ is bounded by $\frac{1}{2}.$ Note that $\Lambda_{3}$ is not
symmetric as the following graph shows.
\begin{center}
\fbox{\includegraphics[
height=2.8655in,
width=2.8067in,
keepaspectratio
]%
{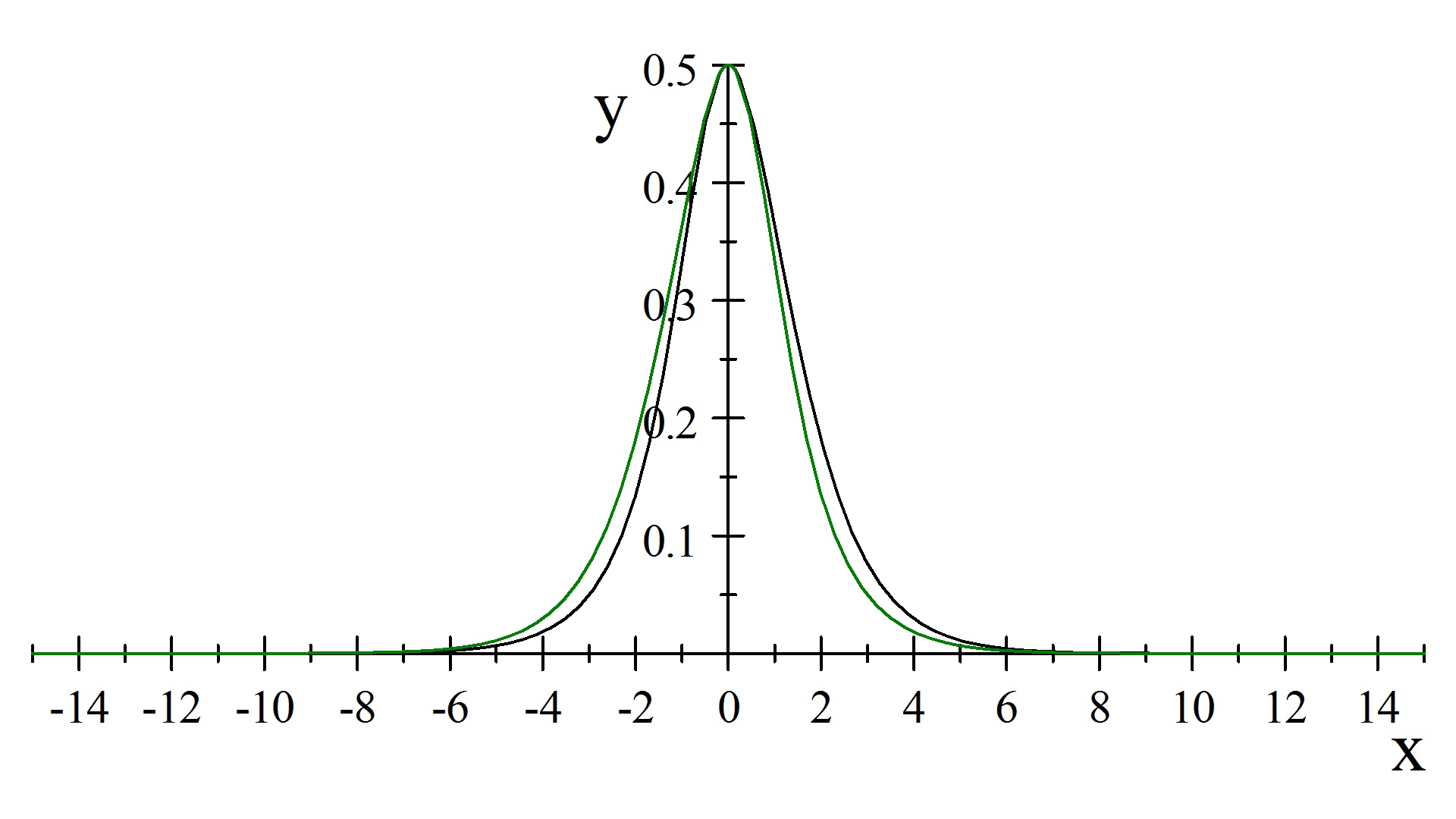}%
}\\
Fig. 3: $\varphi\left(  x\right)  $ in black, $\varphi\left(  -x\right)  $ in
green for $\left(  -1,4,-2,1\right)  $%
\end{center}

\end{example}

For the frequencies $0,0,b,-b$ one can form the vectors $\Lambda_{1}=\left(
0,0,b,-b\right)  $ and $\Lambda_{2}=\left(  b,-b,0,0\right)  ,$ and
$\Lambda_{3}=\left(  0,-b,0,b\right)  $ and $\Lambda_{4}=\left(
0,b,0,-b\right)  .$ For $\Lambda_{1}$ and $\Lambda_{2}$ we have proved strict
diagonal dominance with factor $\frac{1}{2}.$ Cases $\Lambda_{3}$ and
$\Lambda_{4}$ are identical if we replace $b$ by $-b.$ We show that also in
this case we have strict diagonal dominance:

\begin{proposition}
\label{PropEx1} If $\Lambda_{3}=(0,b,0,-b)$ for real $b\neq0$ then
$R_{_{j,j-1}}+R_{_{j,j+1}}<R_{_{j,j}}$ for all $t_{_{1}}<...<t_{_{n}}$, i.e.
 $R$ is strictly diagonally dominant.
\end{proposition}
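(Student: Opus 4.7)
The plan is to verify, with $h_j = t_{j+1}-t_j > 0$, the two pointwise strict inequalities
\[
\sigma(y) < \rho(y) \qquad\text{and}\qquad \sigma(y) < -\rho(-y) \qquad\text{for every } y > 0,
\]
and then to sum them at $y = h_{j-1}$ and $y = h_j$. Without loss of generality assume $b > 0$. Since $\lambda_0+\cdots+\lambda_3 = 0$, Proposition~\ref{PropSym} makes the denominator $\Phi_{(0,b)}(x)\Phi_{(0,-b)}(x)$ even in $x$; combined with the fact that $\Phi_{(0,b,0,-b)}$ is an odd function, this forces $\sigma$ to be odd. Hence $R_{j,j-1} = -\sigma(-h_{j-1}) = \sigma(h_{j-1})$, $R_{j,j+1} = \sigma(h_j)$, and $R_{jj} = \rho(h_{j-1}) + (-\rho(-h_j))$, so that the strict inequality $R_{j,j-1}+R_{j,j+1} < R_{jj}$ follows from summing.

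The first step is to write out the fundamental functions explicitly. One has $\Phi_{(0,b)}(x) = (e^{bx}-1)/b$ and $\Phi_{(0,-b)}(x) = (1-e^{-bx})/b$, whose product equals $2(\cosh(bx)-1)/b^2$; in particular, Condition~\ref{ConditionSTAR} holds with $\delta = +\infty$. Solving the Cauchy problem yields $\Phi_{(0,b,0,-b)}(x) = (\sinh(bx)-bx)/b^3$. Substituting these into (\ref{eqDefw})--(\ref{eqDefr}) and setting $y = bx$,
\[
\sigma(x) = \frac{\sinh y - y}{2b(\cosh y - 1)}, \qquad \rho(x) = \frac{2 + y e^y - \cosh y - e^y}{2b(\cosh y - 1)},
\]
so that $\sigma(y) > 0$ for $y > 0$ because $\sinh y > y$.

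The heart of the proof is an algebraic identity that collapses both inequalities to a single elementary fact. Clearing the common positive denominator and using $\sinh y + \cosh y = e^y$, the difference $\rho(y) - \sigma(y)$ has the same sign as
\[
y(1+e^y) - 2(e^y - 1) = (1+e^y)\bigl(y - 2\tanh(y/2)\bigr),
\]
while a parallel computation using $\cosh y - \sinh y = e^{-y}$ shows that $-\rho(-y) - \sigma(y)$ has the same sign as
\[
y(1+e^{-y}) - 2(1-e^{-y}) = (1+e^{-y})\bigl(y - 2\tanh(y/2)\bigr).
\]
Both differences share the common factor $y - 2\tanh(y/2)$, which is strictly positive on $(0,\infty)$ by the classical inequality $\tanh u < u$ for $u > 0$ (applied at $u = y/2$). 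This simultaneously gives the strict positivity of $\rho(y)$ and of $-\rho(-y)$ on $(0,\infty)$ (as a byproduct, since $\sigma > 0$ there), which is needed to ensure $R_{jj} > 0$, and finishes the proof by summation.

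The only nontrivial step is spotting the common factorization: once one recognizes that both differences reduce to multiples of the same expression $y - 2\tanh(y/2)$, the proposition collapses to the elementary inequality $\tanh u < u$ for $u > 0$.
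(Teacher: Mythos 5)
Your proof is correct, and it differs from the paper's in the decisive step. The paper reduces the claim in the same structural way (comparing $\sigma$ against $\rho(x)$ and $-\rho(-x)$), but it proceeds by first proving $\rho(x)>0$ and $-\rho(-x)>0$ through the derivative $f'(x)=xe^{x}-\sinh x$ and a zero--counting argument in $E(1,1,-1)$, and then invoking Theorem \ref{ThmDom} with the ratio $\tau(x)=-\sigma(-x)/\rho(x)$; the crucial bound on that ratio, however, is only supported by the graph in Fig.~4 (and the displayed formula for $\sigma$ and the identification with $g(bx)$ there contain misprints). You instead use the oddness of $\sigma$ (legitimate, since $(0,b,0,-b)$ is symmetric; cf.\ Proposition \ref{PropSym} and Section 7) to rewrite the row sums directly, and you verify the two pointwise inequalities $\sigma<\rho$ and $\sigma<-\rho(-\cdot)$ by explicit hyperbolic algebra: both differences have, over the common positive denominator $2b(\cosh y-1)$, numerators $(1+e^{\pm y})\bigl(y-2\tanh(y/2)\bigr)$, so everything reduces to $\tanh u<u$. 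This is more elementary, bypasses Theorem \ref{ThmDom} entirely, yields the positivity of $\rho$ and $-\rho(-\cdot)$ (hence of all entries of $R$) as a byproduct of $\sigma>0$, and is fully rigorous precisely where the paper defers to a picture. Two small points to tighten: the claim ``without loss of generality $b>0$'' is not a symmetry of $\rho$ (replacing $b$ by $-b$ swaps the pair $(\lambda_0,\lambda_1)$ with $(\lambda_2,\lambda_3)$ and changes $\rho$), so either observe that for $b<0$ the same factorizations still give the right sign because the factor $y-2\tanh(y/2)$ and the denominator $2b(\cosh y-1)$ both change sign, or justify the reduction by reflecting the knots $t_j\mapsto -t_{n+1-j}$; and the two inequalities should be applied at $y=bh_{j-1}$ and $y=bh_j$ (equivalently, stated for all $x>0$), not at $y=h_{j-1}$ and $y=h_j$.
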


\begin{proof}
For $b\neq0$ we have $\Phi_{(0,b)}(x)=\frac{e^{bx}-1}{b}$ and $\Phi
_{(0,-b)}(x)=\frac{e^{-bx}-1}{-b},$ and
\[
\Phi_{(0,b,0,-b)}=\frac{1}{b^{2}}\left(  \frac{1}{b}\sinh bx-x\right)  \text{
and }\sigma(x)=\frac{\sinh bx-bx}{b^{2}x\sinh bx}.
\]
A calculation shows that
\[
\rho_{0}\left(  x\right)  =\frac{\left(  \cosh bx-1\right)  \left(
e^{bx}-1\right)  -\left(  \sinh bx-bx\right)  e^{bx}}{b^{3}}=\frac{f\left(
bx\right)  }{b^{3}}%
\]
where $f\left(  x\right)  =\left(  \cosh x-1\right)  \left(  e^{x}-1\right)
-\left(  \sinh x-x\right)  e^{x}.$ Note that $f^{\prime}\left(  x\right)
=xe^{x}-\sinh x$ is positive for $x>0$ (note that $g=f^{\prime}$ in $E\left(
1,1,-1\right)  $ and it has a double zero at $x=0$, so it has no more zeros).
It follows that $\rho\left(  x\right)  >0$ for all $x>0$ and $-\rho\left(
-x\right)  >0$ for all $x>0.$ We apply Theorem \ref{ThmDom} and we have to
estimate
\[
\frac{-\sigma(-x)}{\rho(x)}=\frac{\Phi_{(0,b,0,-b)}\left(  x\right)  }%
{\rho_{_{0}}(x)}=\frac{b\sinh bx-bx}{\left(  \cosh bx-1\right)  \left(
e^{bx}-1\right)  -\left(  \sinh bx-bx\right)  e^{bx}}=g\left(  bx\right)
\]
for all $x\in\mathbb{R}\ $where
\[
g\left(  x\right)  =\frac{\sinh x-x}{\left(  \cosh x-1\right)  \left(
1-e^{-x}\right)  -\left(  \sinh x-x\right)  e^{-x}}.
\]
The behavior of the function $g$ is demonstrated in the following graph:
\begin{center}
\fbox{\includegraphics[
height=2.8596in,
width=2.8699in,
keepaspectratio
]%
{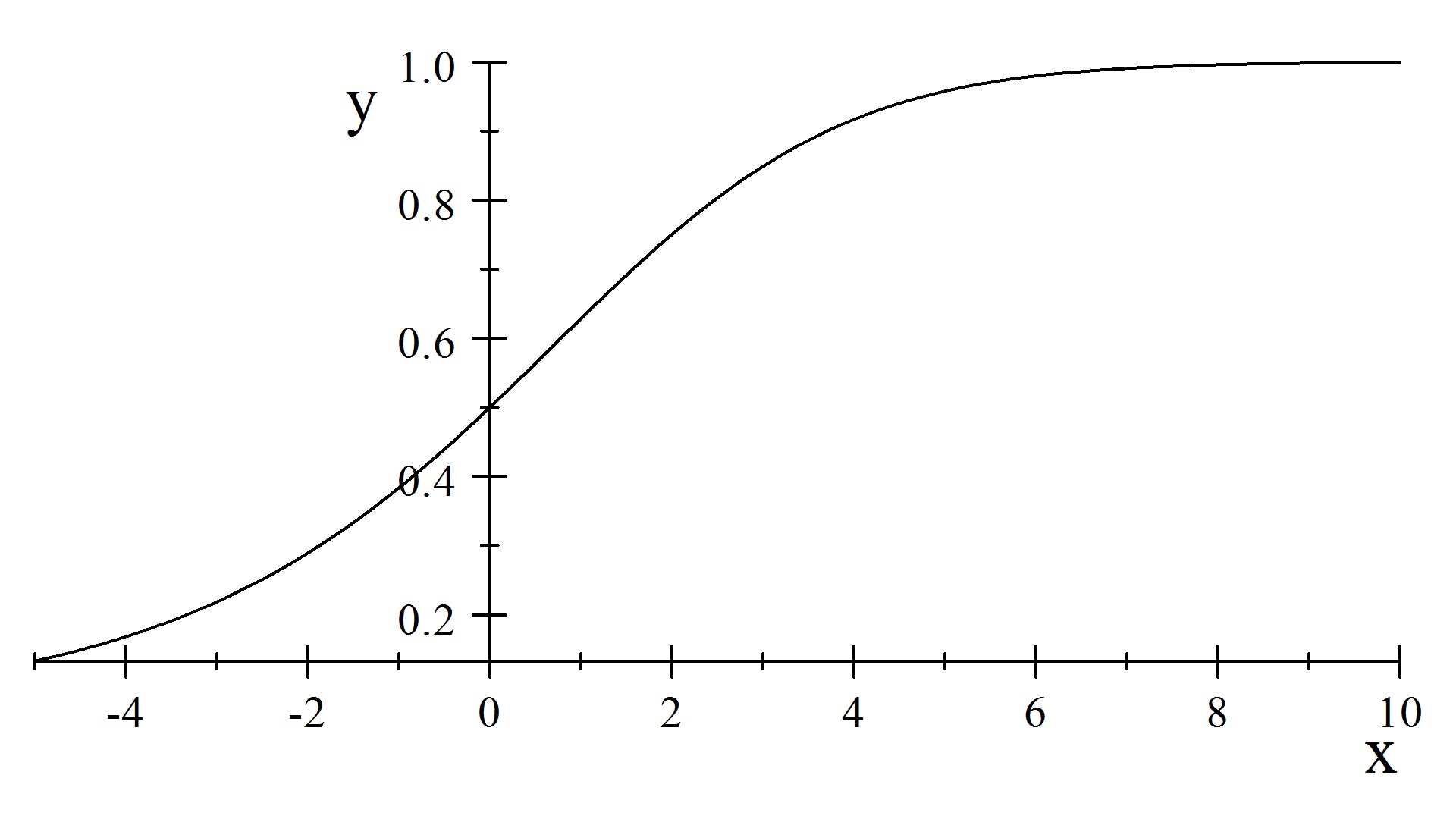}%
}\\
Fig. 4: Graph of $g\left(  x\right)  $%
\end{center}

\end{proof}

If we allow complex frequencies we still obtain strict diagonal dominance, but
again with weaker constants:

\begin{proposition}
Let $0<\delta\leq\pi$. Then for $\Lambda_{3}=(0,0,-i\beta,i\beta)$ with
$\beta>0$ we have
\[
R_{_{j,j-1}}+R_{_{j,j+1}}\leq\frac{\sin\delta-\delta}{\delta\cos\delta
-\sin\delta}R_{_{j,j}}<R_{_{j,j}}%
\]
for all $t_{_{1}}<...<t_{_{n}}$ such that $\beta(t_{_{j+1}}-t_{_{j}}%
)\leq\delta$ for $j=1,...,n-1.$
\end{proposition}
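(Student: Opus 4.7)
The approach is to specialize Theorem \ref{ThmDom} to the symmetric vector $\Lambda_3 = (0, 0, -i\beta, i\beta)$, which is both conjugation invariant and satisfies $-\Lambda_3 = \Lambda_3$ as a multiset. Proposition \ref{PropSym} together with a direct inspection then show that both $\rho$ and $\sigma$ are odd real-valued functions on the relevant interval, so $\tau(x) = -\sigma(-x)/\rho(x) = \sigma(x)/\rho(x)$ is even, and it suffices to analyse $\tau$ on the positive half-line.

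Using the elementary identities $\Phi_{(0,0)}(x) = x$, $\Phi_{(-i\beta, i\beta)}(x) = \sin(\beta x)/\beta$, and $\Phi_{\Lambda_3}(x) = (\beta x - \sin\beta x)/\beta^{3}$ (the second example in Section 2 with $\omega=\beta$), substitution into (\ref{eqDefw}) and (\ref{eqDefr}) gives, after a short simplification,
$$\rho(x) = \frac{\sin\beta x - \beta x\cos\beta x}{\beta^{2}x\sin\beta x}, \qquad \sigma(x) = \frac{\beta x - \sin\beta x}{\beta^{2}x\sin\beta x},$$
so that $\tau(x) = h(\beta x)$ with $h(u) := (u - \sin u)/(\sin u - u\cos u)$. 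Writing $u = \beta x \in (0, \pi)$, both the numerator $\sin u - u\cos u$ of $\rho$ and its denominator $\beta u \sin u$ are strictly positive on $(0, \pi)$: the former vanishes at $0$ with derivative $u\sin u > 0$. Hence $\rho(x) > 0$ on $(0, \delta/\beta)$ and, by oddness, $-\rho(-x) > 0$ there as well. The hypotheses of Theorem \ref{ThmDom} are therefore satisfied, and the optimal constant is $M_\delta = \sup_{u \in [0, \delta]} h(u)$.

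The sole nontrivial step — and the main obstacle — is to show that $h$ is strictly increasing on $(0, \pi)$, so that $M_\delta = h(\delta) = (\sin\delta - \delta)/(\delta\cos\delta - \sin\delta)$. The cleanest route is the monotone form of L'H\^opital's rule: writing $N(u) = u - \sin u = \int_{0}^{u}(1 - \cos t)\,dt$ and $D(u) = \sin u - u\cos u = \int_{0}^{u} t\sin t\,dt$, both vanishing at $0$ with $D'(u) = u\sin u > 0$ on $(0, \pi)$, it suffices to prove that
$$\frac{N'(u)}{D'(u)} = \frac{1 - \cos u}{u\sin u} = \frac{\tan(u/2)}{u}$$
is strictly increasing on $(0, \pi)$. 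Differentiating $\tan(u/2)/u$ and clearing the positive factor $2u^{2}\cos^{2}(u/2)$ reduces the sign of the derivative to the sign of $u - \sin u$, which is positive for $u > 0$. This yields monotonicity of $h$, hence $M_\delta = h(\delta)$; plugging into Theorem \ref{ThmDom} gives the stated bound, and the strict inequality $h(\delta) < 1$ for $\delta < \pi$ follows from $h(\pi) = 1$ together with strict monotonicity.
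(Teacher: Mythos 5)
Your proof is correct. The overall scaffolding coincides with the paper's --- both compute $\Phi_{(0,0)}(x)=x$, $\Phi_{(-i\beta,i\beta)}(x)=\beta^{-1}\sin\beta x$ and $\Phi_{\Lambda_{3}}(x)=\beta^{-3}(\beta x-\sin\beta x)$, reduce everything to the single function $h(u)=(u-\sin u)/(\sin u-u\cos u)$ on $(0,\pi)$, and feed the resulting bound into Theorem \ref{ThmDom} --- but the crux, strict monotonicity of $h$ on $(0,\pi)$, is handled by a genuinely different argument. The paper argues by contradiction and zero counting: if $h$ took a value $C$ twice, then $F(x)=\sin x-x-C(x\cos x-\sin x)$ would have a triple zero at $0$ plus two further zeros, so by Rolle $F''$ would vanish twice in $(0,\pi)$, forcing $x\cot x$ to take the value $(1-C)/C$ at two points and contradicting the strict decrease of $x\cot x$ there. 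You instead apply the monotone form of l'H\^{o}pital's rule to $N(u)=u-\sin u$ and $D(u)=\sin u-u\cos u$, reducing the problem to the monotonicity of $N'/D'=(1-\cos u)/(u\sin u)=\tan(u/2)/u$, which one differentiation and the inequality $u>\sin u$ settle. The two routes bottom out in essentially the same elementary fact (monotonicity of a cotangent-type quotient), but yours is more direct, avoids the Rolle bookkeeping, and delivers strictness for free, at the price of quoting the monotone l'H\^{o}pital lemma, which you should either cite or prove (two lines via the Cauchy mean value theorem: $f(x)/g(x)=f'(c)/g'(c)$ for some $c<x$, hence $g^{2}(f/g)'=g'g\,\bigl(f'/g'-f/g\bigr)\geq 0$). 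One further remark: since the proposition is stated without absolute values, it is worth saying explicitly --- as your oddness and positivity observations already imply --- that $\sigma$ and $\rho$ are positive on $(0,\pi/\beta)$, so all relevant entries of $R$ are positive and the conclusion of Theorem \ref{ThmDom} takes the stated form.
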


\begin{proof}
Clearly $\Phi_{(\lambda_{_{0}},\lambda_{_{1}})}(x)=x$ is positive for all
$x>0$, and
\[
\Phi_{(\lambda_{_{2}},\lambda_{_{3}})}(x)=\frac{1}{\beta}\sin\beta x
\]
is positive for all $x\in(0,\pi/\beta)$. Note that
\[
\Phi_{(0,0,-i\beta,i\beta)}(x)=\frac{1}{\beta^{3}}(\beta x-\sin\beta x)
\]
is positive for all $x>0$ since the function $x-\sin x$ is positive for all
$x>0.$ Then
\[
\frac{\sigma(x)}{-\rho(-x)}=\frac{\sin\beta x-\beta x}{\beta x\cos\beta
x-\sin\beta x}=f(\beta x)
\]
where
\begin{equation}
f(x):=\frac{\sin x-x}{x\cos x-\sin x} \label{func}%
\end{equation}
satisfies $\lim_{x\rightarrow0^{+}}f(x)=\frac{1}{2}$ which is obtained by
applying three times the l'Hospital's rule.

The function (\ref{func}) is positive in $(0,\pi)$ because $\sin x-x<0$ for
$x>0$, $x<\tan x$ with $\cos x>0$ on $(0,\pi/2)$ and $\cos x<0,\sin x>0$ on
$(\pi/2,\pi)$.

We next show that it is increasing in $(0,\pi)$. We first note that
$f^{\prime}(\pi)=2/\pi>0$, i.e. $f(\cdot)$ is increasing around $x=\pi$.
Hence, to establish that it is increasing in $(0,\pi)$, it suffices to show
that it is monotone in this interval. Aiming at contradiction, suppose that
there exist $x_{1}<x_{2}$ in $(0,\pi)$ such that $\frac{\sin x_{j}-x_{j}%
}{x_{j}\cos x_{j}-\sin x_{j}}=C>0$ for $j=1,2.$ Then
\[
F(x)=\sin x-x-C(x\cos x-\sin x)
\]
vanishes at $x_{1}$ and $x_{2},$ and clearly $F$ has a zero of order $3$ at
$0.$ By the theorem of Rolle
\[
G(x)=\frac{d^{2}}{dx^{2}}\left(  (\sin x-x)-C(x\cos x-\sin x)\right)  =C\sin
x-\sin x+Cx\cos x
\]
has at least two zeros, say $y_{1}<y_{2}$, in $(0,\pi)$. Define $H(x)=x\cot
x$. Then
\begin{equation}
H(y_{j})=\frac{1-C}{C} \label{contra}%
\end{equation}
for $j=1,2$. But the function $H(x)=x\cot x$ is decreasing on $(0,\pi)$
because
\[
H^{\prime}(x)=\frac{\sin x\cos x-x}{\sin^{^{2}}x}=\frac{\frac{1}{2}\sin
2x-x}{\sin^{^{2}}x}.
\]
The numerator is a function vanishing at $x=0$ and having derivative
$\cos2x-1<0$, i.e. the numerator is negative in $(0,\pi)$ which yields
$H^{\prime}(x)<0$ in $(0,\pi)$, i.e. $H(\cdot)$ is decreasing in this
interval. But this contradicts (\ref{contra}). Hence, $H(\cdot)$ is increasing
in $(0,\pi)$. This yields
\[
\frac{\sigma(x)}{-\rho(-x)}\leq\frac{\sin\delta-\delta}{\delta\cos\delta
-\sin\delta}=:M_{\delta}%
\]
for all $x\in\left[  0,\delta/\beta\right]  $. Note that $M_{\pi}=1$.
\end{proof}

\section{Positivity properties of the matrix $R$}

In this section we shall show that the entries in (\ref{eqRjj}) and
(\ref{eqRjj1}) are positive numbers provided that $\lambda_{0},...,\lambda
_{3}$ are real numbers. In combination with the fact that a strictly
diagonally dominant matrix with non-negative entries is positive definite, one
deduce from these results sufficient criteria for the positive-definiteness of
the matrix $R$ which we will not formulate explicitly here.

For a simple proof of the positivity of entries in (\ref{eqRjj}) and
(\ref{eqRjj1}) the next Theorem \ref{ThmMainrho} will be very useful. It
states the crucial fact that $\rho_{0},$ the numerator of $\rho\left(
x\right)  $ defined in (\ref{eqDefrhonull}), is an exponential polynomial
which depends only on five frequencies, so $\rho_{0}$ is in the vector space
\begin{equation}
E\left(  \lambda_{0}+\lambda_{2},\quad\lambda_{0}+\lambda_{3},\quad\lambda
_{1}+\lambda_{2},\quad\lambda_{1}+\lambda_{3},\quad\lambda_{0}+\lambda
_{1}\right)  . \label{eqES}%
\end{equation}

This is surprising since the definition of $\rho_{0}\left(  x\right)  $ is
based on the products $\Phi_{\left(  \lambda_{0},....,\lambda_{3}\right)
}\Phi_{\left(  \lambda_{0},\lambda_{1}\right)  }^{\prime}$ and $\Phi_{\left(
\lambda_{0},....,\lambda_{3}\right)  }^{\prime}\Phi_{\left(  \lambda
_{0},\lambda_{1}\right)  }$ which are clearly exponential polynomials with
frequencies $\lambda_{j}+\lambda_{k}$ where $j=0,...,3$ and $k=0,1$. Thus it
is an obvious fact that $\rho_{0}$ can be written as a exponential polynomial
with $8$ frequencies.

\begin{theorem}
\label{ThmMainrho}Let $\left(  \lambda_{0},\lambda_{1},\lambda_{2},\lambda
_{3}\right)  \in\mathbb{C}^{4}$ and define $D_{\lambda}f=f^{\prime}-\lambda
f.$ Then
\[
D_{\lambda_{1}+\lambda_{0}}\rho_{0}\left(  x\right)  =\Phi_{\left(
\lambda_{0},\lambda_{1}\right)  }\left(  x\right)  \Phi_{\left(  \lambda
_{2},\lambda_{3}\right)  }\left(  x\right)  .
\]
Moreover, $\rho_{0}$ is an exponential polynomial contained in (\ref{eqES})
satisfying the initial conditions $\rho_{0}^{\left(  j\right)  }\left(
0\right)  =0$ for $j=0,1,2$ and%
\[
\rho_{0}^{\left(  3\right)  }\left(  0\right)  =2\text{ and }\rho_{0}^{\left(
4\right)  }\left(  0\right)  =5\lambda_{0}+5\lambda_{1}+3\lambda_{2}%
+3\lambda_{3}.
\]

\end{theorem}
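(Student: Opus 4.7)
The plan is to first establish the key identity $D_{\lambda_0+\lambda_1}\rho_0 = \Phi_{(\lambda_0,\lambda_1)}\Phi_{(\lambda_2,\lambda_3)}$ by a Wronskian-style computation. Writing $\Phi := \Phi_{(\lambda_0,\ldots,\lambda_3)}$ and $\varphi := \Phi_{(\lambda_0,\lambda_1)}$ so that $\rho_0 = \Phi'\varphi - \Phi\varphi'$, differentiation with cancellation of the cross term $\Phi'\varphi'$ gives $\rho_0' = \Phi''\varphi - \Phi\varphi''$. Since $\varphi$ satisfies $L_1\varphi=0$, i.e.\ $\varphi'' = (\lambda_0+\lambda_1)\varphi' - \lambda_0\lambda_1\varphi$, substituting and subtracting $(\lambda_0+\lambda_1)\rho_0$ regroups everything as $\varphi\cdot L_1\Phi$. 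I would then invoke the recurrence (\ref{rec0}) together with the invariance of $\Phi_{\Lambda_N}$ under reordering of its exponents: applying $(d/dt-\lambda_0)(d/dt-\lambda_1)$ to $\Phi_{(\lambda_2,\lambda_3,\lambda_0,\lambda_1)}$ peels off the last two exponents by two successive uses of (\ref{rec0}), yielding $L_1\Phi = \Phi_{(\lambda_2,\lambda_3)}$. This proves the main identity.

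For the exponential-polynomial membership, observe that the product $\varphi\cdot\Phi_{(\lambda_2,\lambda_3)}$ is, in the generic case, a linear combination of the four exponentials $e^{(\lambda_i+\lambda_j)x}$ with $i\in\{0,1\}$, $j\in\{2,3\}$ (with appropriate polynomial prefactors whenever some of the sums coincide), hence lies in the four-dimensional space $E(\lambda_0+\lambda_2,\lambda_0+\lambda_3,\lambda_1+\lambda_2,\lambda_1+\lambda_3)$ annihilated by $M := \prod_{i\in\{0,1\},\,j\in\{2,3\}}(d/dt-(\lambda_i+\lambda_j))$. The main identity then gives $M\,D_{\lambda_0+\lambda_1}\rho_0 = 0$, so $\rho_0$ lies in the kernel of the order-$5$ operator $M\cdot (d/dt-(\lambda_0+\lambda_1))$, which is exactly $E(\lambda_0+\lambda_2,\lambda_0+\lambda_3,\lambda_1+\lambda_2,\lambda_1+\lambda_3,\lambda_0+\lambda_1)$.

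Finally, I would deduce the initial values from Taylor expansion at $0$. The Cauchy data defining $\Phi_{\Lambda_N}$ give $\Phi(x)=\tfrac{x^3}{6}+O(x^4)$ and $\varphi(x)=x+\tfrac{\lambda_0+\lambda_1}{2}x^2+O(x^3)$; substituting into $\rho_0=\Phi'\varphi-\Phi\varphi'$ and collecting terms yields $\rho_0(x)=\tfrac{x^3}{3}+O(x^4)$, so $\rho_0^{(j)}(0)=0$ for $j\le 2$ and $\rho_0^{(3)}(0)=2$. For the fourth derivative, rather than expand $\rho_0$ further, I would use the main identity: the coefficient of $x^3$ in $\varphi(x)\Phi_{(\lambda_2,\lambda_3)}(x)=x^2+\tfrac{1}{2}(\lambda_0+\lambda_1+\lambda_2+\lambda_3)x^3+O(x^4)$ must equal the coefficient of $x^3$ in $\rho_0'-(\lambda_0+\lambda_1)\rho_0$, namely $4a_4-(\lambda_0+\lambda_1)a_3$ with $a_3=1/3$, giving $\rho_0^{(4)}(0)=24a_4=5(\lambda_0+\lambda_1)+3(\lambda_2+\lambda_3)$. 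The only real obstacle is the first paragraph: verifying the cancellations in the Wronskian manipulation and justifying the exponent-reordering step via (\ref{rec0}) both require care, but neither is deep.
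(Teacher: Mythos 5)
Your proposal is correct and takes essentially the same route as the paper: the key identity comes from differentiating the Wronskian $\rho_0=\Phi'\varphi-\Phi\varphi'$, using $L_1\varphi=0$ and $L_1\Phi_{(\lambda_0,\ldots,\lambda_3)}=\Phi_{(\lambda_2,\lambda_3)}$ via (\ref{rec0}) (the paper packages the same cancellation through the product rule $D_{\lambda+\mu}(fg)=D_\lambda f\cdot g+f\cdot D_\mu g$), and the initial values then follow from the resulting first-order relation exactly as in your coefficient matching. If anything you are slightly more explicit than the paper about the exponent reordering needed before applying (\ref{rec0}) and about why $\rho_0$ lies in the space (\ref{eqES}).
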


\begin{proof}
It is trivial to see that
\[
\rho_{0}=D_{\lambda_{0}}\Phi_{(\lambda_{_{0}},...,\lambda_{_{3}})}\cdot
\Phi_{(\lambda_{_{0}},\lambda_{_{1}})}-\Phi_{(\lambda_{_{0}},...,\lambda
_{_{3}})}\cdot D_{\lambda_{0}}\Phi_{(\lambda_{_{0}},\lambda_{_{1}})}.
\]
Using that
\begin{equation}
D_{\left(  \lambda+\mu\right)  }\left(  fg\right)  =D_{\lambda}f\cdot g+f\cdot
D_{\mu} \label{eqDformula}%
\end{equation}
it follows that
\begin{align*}
D_{\lambda_{1}+\lambda_{0}}\rho_{0}  &  =D_{\lambda_{1}}D_{\lambda_{0}}%
\Phi_{(\lambda_{_{0}},...,\lambda_{_{3}})}\cdot\Phi_{(\lambda_{_{0}}%
,\lambda_{_{1}})}\ +D_{\lambda_{0}}\Phi_{(\lambda_{_{0}},...,\lambda_{_{3}}%
)}\cdot D_{\lambda_{0}}\Phi_{(\lambda_{_{0}},\lambda_{_{1}})}\\
&  -D_{\lambda_{0}}\Phi_{(\lambda_{_{0}},...,\lambda_{_{3}})}\cdot
D_{\lambda_{0}}\Phi_{(\lambda_{_{0}},\lambda_{_{1}})}-\Phi_{(\lambda_{_{0}%
},...,\lambda_{_{3}})}\cdot D_{\lambda_{1}}D_{\lambda_{0}}\Phi_{(\lambda
_{_{0}},\lambda_{_{1}})}.
\end{align*}
The last summand is zero, the second and third summand cancel. Since
$D_{\lambda_{1}}D_{\lambda_{0}}\Phi_{(\lambda_{_{0}},...,\lambda_{_{3}})}%
=\Phi_{(\lambda_{2},\lambda_{_{3}})}$ by (\ref{rec0}) we conclude that
\[
D_{\lambda_{1}+\lambda_{0}}\rho_{0}\left(  x\right)  =\Phi_{(\lambda
_{2},\lambda_{_{3}})}(x)\Phi_{(\lambda_{_{0}},\lambda_{_{1}})}(x).
\]
We look now at the derivatives of $\rho_{0}.$ We know that
\[
D_{\lambda_{1}+\lambda_{0}}\rho_{0}=\rho_{0}^{\prime}-\left(  \lambda
_{0}+\lambda_{1}\right)  \rho_{0}=\Phi_{\left(  \lambda_{0},\lambda
_{1}\right)  }\Phi_{\left(  \lambda_{2},\lambda_{3}\right)  }.
\]
Further we know that $\rho_{0}^{\left(  j\right)  }\left(  0\right)  =0$ for
$j=0,1,2.$ Then%
\[
\rho_{0}^{\prime\prime\prime}=\left(  \lambda_{0}+\lambda_{1}\right)  \rho
_{0}^{\prime\prime}+\Phi_{\left(  \lambda_{0},\lambda_{1}\right)  }%
^{\prime\prime}\Phi_{\left(  \lambda_{2},\lambda_{3}\right)  }+2\Phi_{\left(
\lambda_{0},\lambda_{1}\right)  }^{\prime}\Phi_{\left(  \lambda_{2}%
,\lambda_{3}\right)  }^{\prime}+\Phi_{\left(  \lambda_{0},\lambda_{1}\right)
}\Phi_{\left(  \lambda_{2},\lambda_{3}\right)  }^{\prime\prime}.
\]
Since $\rho_{0}^{\prime\prime}\left(  0\right)  =0$ and $\Phi_{\left(
\lambda_{0},\lambda_{1}\right)  }\left(  0\right)  =\Phi_{\left(  \lambda
_{2},\lambda_{3}\right)  }=0$ we have
\[
\rho_{0}^{\prime\prime\prime}\left(  0\right)  =2\Phi_{\left(  \lambda
_{0},\lambda_{1}\right)  }^{\prime}\left(  0\right)  \Phi_{\left(  \lambda
_{2},\lambda_{3}\right)  }^{\prime}\left(  0\right)  =2
\]
Further by Leibniz's rule
\[
\rho_{0}^{\left(  4\right)  }=\left(  \lambda_{0}+\lambda_{1}\right)  \rho
_{0}^{\left(  3\right)  }+\Phi_{\left(  \lambda_{0},\lambda_{1}\right)
}^{\prime\prime\prime}\Phi_{\left(  \lambda_{2},\lambda_{3}\right)  }%
+3\Phi_{\left(  \lambda_{0},\lambda_{1}\right)  }^{\prime\prime}\Phi_{\left(
\lambda_{2},\lambda_{3}\right)  }^{\prime}+\Phi_{\left(  \lambda_{0}%
,\lambda_{1}\right)  }^{\prime}\Phi_{\left(  \lambda_{2},\lambda_{3}\right)
}^{\prime\prime}+\Phi_{\left(  \lambda_{0},\lambda_{1}\right)  }\Phi_{\left(
\lambda_{2},\lambda_{3}\right)  }^{\prime\prime\prime}.
\]
and we obtain
\[
\rho_{0}^{\left(  4\right)  }\left(  0\right)  =2\left(  \lambda_{0}%
+\lambda_{1}\right)  +3\left(  \lambda_{0}+\lambda_{1}\right)  +3\left(
\lambda_{2}+\lambda_{3}\right)  =5\left(  \lambda_{0}+\lambda_{1}\right)
+3\left(  \lambda_{2}+\lambda_{3}\right)  .
\]

\end{proof}

\begin{theorem}
\label{positivity} Suppose that $\lambda_{0},...,\lambda_{3}$ are real
numbers. Then the values $\sigma\left(  x\right)  $ and $-\sigma\left(
-x\right)  $, and $\rho\left(  x\right)  $ and $-\rho\left(  -x\right)  $ are
positive for all $x>0,$
\end{theorem}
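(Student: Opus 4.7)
The plan is to handle $\sigma$ first, since it is essentially a ratio of fundamental functions, and then use Theorem \ref{ThmMainrho} to reduce $\rho$ to an analogous integral expression.

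For $\sigma(x)$, I would note that by the proposition that $\Phi_{(\mu_0,\ldots,\mu_N)}(t)>0$ for all $t>0$ whenever $\mu_0,\ldots,\mu_N\in\mathbb{R}$, both the numerator $\Phi_{(\lambda_0,\ldots,\lambda_3)}(x)$ and the denominator $\Phi_{(\lambda_0,\lambda_1)}(x)\Phi_{(\lambda_2,\lambda_3)}(x)$ of $\sigma(x)$ are positive for $x>0$; hence $\sigma(x)>0$. For $-\sigma(-x)$ with $x>0$, I would apply (\ref{neg_lambdas}): for the numerator, $\Phi_{(\lambda_0,\ldots,\lambda_3)}(-x)=(-1)^3\Phi_{(-\lambda_0,\ldots,-\lambda_3)}(x)<0$ because $(-\lambda_0,\ldots,-\lambda_3)$ is again a real tuple. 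For each factor of the denominator, $\Phi_{(\lambda_i,\lambda_j)}(-x)=-\Phi_{(-\lambda_i,-\lambda_j)}(x)<0$, so their product is positive. Therefore $\sigma(-x)<0$ and $-\sigma(-x)>0$.

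For $\rho(x)$ the denominator $\Phi_{(\lambda_0,\lambda_1)}(x)\Phi_{(\lambda_2,\lambda_3)}(x)$ is positive for $x>0$, so it suffices to prove $\rho_0(x)>0$ for $x>0$ and $\rho_0(-x)<0$ for $x>0$. Here I would invoke Theorem \ref{ThmMainrho}, which gives the first-order linear ODE
\[
\rho_0'(x)-(\lambda_0+\lambda_1)\rho_0(x)=\Phi_{(\lambda_0,\lambda_1)}(x)\Phi_{(\lambda_2,\lambda_3)}(x),
\]
together with the initial value $\rho_0(0)=0$. Multiplying by the integrating factor $e^{-(\lambda_0+\lambda_1)x}$ and integrating from $0$ to $x$ yields the explicit representation
\[
\rho_0(x)=e^{(\lambda_0+\lambda_1)x}\int_0^x e^{-(\lambda_0+\lambda_1)s}\,\Phi_{(\lambda_0,\lambda_1)}(s)\,\Phi_{(\lambda_2,\lambda_3)}(s)\,ds.
\]

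For $x>0$ the integrand is a product of three positive real-valued factors (since $\lambda_0+\lambda_1$ is real, and both $\Phi_{(\lambda_0,\lambda_1)}(s)$, $\Phi_{(\lambda_2,\lambda_3)}(s)$ are positive on $(0,x)$), so the integral is positive and $\rho_0(x)>0$, giving $\rho(x)>0$. For the case $x<0$, I would substitute $-x$ for $x$ in the same formula and rewrite
\[
\rho_0(-x)=-e^{-(\lambda_0+\lambda_1)x}\int_{-x}^0 e^{-(\lambda_0+\lambda_1)s}\,\Phi_{(\lambda_0,\lambda_1)}(s)\,\Phi_{(\lambda_2,\lambda_3)}(s)\,ds,
\]
and observe that on the interval $(-x,0)$ each of $\Phi_{(\lambda_0,\lambda_1)}(s)$ and $\Phi_{(\lambda_2,\lambda_3)}(s)$ is negative (by the argument used for $\sigma$), so their product is positive; hence the integral is positive and $\rho_0(-x)<0$. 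Combined with the positivity of the denominator $\Phi_{(\lambda_0,\lambda_1)}(-x)\Phi_{(\lambda_2,\lambda_3)}(-x)$ (again a product of two negatives), this gives $\rho(-x)<0$, i.e.\ $-\rho(-x)>0$.

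The only nontrivial step is recognizing that Theorem \ref{ThmMainrho} reduces $\rho_0$ to an elementary integral with a sign-definite integrand; once that ODE is in hand, everything else is a direct application of the positivity of $\Phi$ on $(0,\infty)$ for real frequencies together with the reflection identity (\ref{neg_lambdas}).
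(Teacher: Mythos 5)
Your proposal is correct and follows essentially the same route as the paper: positivity of $\Phi$ for real frequencies plus the reflection identity handles $\sigma$, and Theorem \ref{ThmMainrho} handles $\rho_0$. The only cosmetic difference is that you solve the first-order ODE explicitly as an integral, whereas the paper observes that $\rho_0(x)e^{-(\lambda_0+\lambda_1)x}$ is increasing and vanishes at $0$ — these are the same argument.
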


\begin{proof}
We know that $\Phi_{\Lambda_{3}}(x)>0$ for all $x>0$ whenever $\lambda
_{0},...,\lambda_{3}$ are real, and therefore
\[
\sigma(x)=\frac{\Phi_{(\lambda_{_{0}},...,\lambda_{_{3}})}(x)}{\Phi
_{(\lambda_{_{0}},\lambda_{_{1}})}(x)\Phi_{(\lambda_{_{2}},\lambda_{_{3}}%
)}(x)}>0
\]
for all $x>0.$ Further on, for $x>0$ we have
\[
\sigma(-x)=\frac{\Phi_{(\lambda_{_{0}},...,\lambda_{_{3}})}(-x)}%
{\Phi_{(\lambda_{_{0}},\lambda_{_{1}})}(-x)\Phi_{(\lambda_{_{2}},\lambda
_{_{3}})}(-x)}=\frac{(-1)^{3}\Phi_{(-\lambda_{_{0}},...,-\lambda_{_{3}})}%
(x)}{\Phi_{(-\lambda_{_{0}},-\lambda_{_{1}})}(x)\Phi_{(-\lambda_{_{2}%
},-\lambda_{_{3}})}(x)}<0.
\]
Note that $\rho_{0}$ is a real-valued function. By Theorem \ref{ThmMainrho} we
have
\[
\frac{d}{dx}\left(  \rho_{0}\left(  x\right)  e^{\left(  \lambda_{0}%
+\lambda_{1}\right)  x}\right)  =e^{\left(  \lambda_{0}+\lambda_{1}\right)
x}D_{\lambda_{0}+\lambda_{1}}\left(  \rho_{0}\left(  x\right)  \right)
=\Phi_{(\lambda_{_{0}},\lambda_{_{1}})}(x)\Phi_{(\lambda_{_{2}},\lambda_{_{3}%
})}(x).
\]
Note that $\Phi_{(\lambda_{_{0}},\lambda_{_{1}})}(x)$ and $\Phi_{(\lambda
_{_{2}},\lambda_{_{3}})}(x)$ are positive for $x>0$, and negative for $x<0.$
Thus $\Phi_{(\lambda_{_{0}},\lambda_{_{1}})}(x)\Phi_{(\lambda_{_{2}}%
,\lambda_{_{3}})}(x)>0$ for all $x\neq0.$ Hence the function $\rho_{0}\left(
x\right)  e^{\left(  \lambda_{0}+\lambda_{1}\right)  x}$ is increasing for
$x>0$ and since $\rho_{0}\left(  0\right)  =0$ it follows that $\rho
_{0}\left(  x\right)  >0$ for all $x>0,$ and $\rho_{0}\left(  x\right)  <0$
for all $x<0.$
\end{proof}

\begin{corollary}
Suppose that $\lambda_{_{0}},...,\lambda_{_{3}}$ are real numbers. Then the
entries $R_{_{j,j}}$, as well as $R_{_{j,j-1}}$ and $R_{_{j,j+1}}$ are
positive numbers for all $t_{_{1}}<...<t_{_{n}}$.
\end{corollary}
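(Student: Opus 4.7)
The plan is to observe that this corollary is essentially a direct consequence of Theorem \ref{positivity} combined with the explicit formulas for the entries of $R$ given in Theorem \ref{r_jj}. There is no serious work left: all positivity facts about $\sigma$ and $\rho$ on the positive and negative halflines have already been established, and we only need to plug in positive arguments $h_j := t_{j+1}-t_j > 0$ and check signs.

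First I would recall from Theorem \ref{r_jj} that
\[
R_{j,j+1}=\sigma(h_j),\qquad R_{j+1,j}=-\sigma(-h_j),\qquad R_{j,j}=\rho(h_{j-1})-\rho(-h_j),
\]
where $h_j = t_{j+1}-t_j > 0$. For the off-diagonal entries, Theorem \ref{positivity} tells us directly that $\sigma(x)>0$ for $x>0$ and $-\sigma(-x)>0$ for $x>0$. Applying these with $x=h_j>0$ gives $R_{j,j+1}>0$ and $R_{j+1,j}>0$.

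For the diagonal entry I would rewrite $R_{j,j}=\rho(h_{j-1})+(-\rho(-h_j))$. Theorem \ref{positivity} gives $\rho(x)>0$ for $x>0$ (apply with $x=h_{j-1}$) and $-\rho(-x)>0$ for $x>0$ (apply with $x=h_j$). Hence $R_{j,j}$ is a sum of two strictly positive numbers, so $R_{j,j}>0$.

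Since every step is a direct substitution into the signs supplied by Theorem \ref{positivity}, there is no real obstacle; the corollary is simply a bookkeeping consequence of the theorem. The proof fits in a few lines and requires no further analysis of the fundamental function.
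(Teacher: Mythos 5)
Your proposal is correct and is exactly the intended derivation: the paper states this as an immediate corollary of Theorem \ref{positivity} together with the explicit formulas for $R_{j,j}$, $R_{j,j\pm 1}$ from Theorem \ref{r_jj}, which is precisely the substitution you carry out.
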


\section{ Symmetry properties}

Since in the polynomial case the matrix R is symmetric, it is natural to
investigate about the symmetry in the more general case. In the present
section we study some symmetry properties of the matrix $R$ and of the
functions $\rho$ and $\sigma.$

\begin{theorem}
Let $\Lambda_{3}=(\lambda_{_{0}},...,\lambda_{_{3}})$ be given. Then the
following statements are equivalent:

a) $\Lambda_{3}$ is symmetric.

b) $\sigma(x)$ is odd.

c) The matrix $R$ for is symmetric for any choices of interpolation points
$t_{_{1}}<...<t_{n}.$

d) $\Phi_{\left(  \lambda_{0},\lambda_{1},\lambda_{2},\lambda_{3}\right)
}\left(  x\right)  $ is odd.
\end{theorem}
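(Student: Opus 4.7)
The plan is to prove the four-way equivalence via the cycle (a) $\Leftrightarrow$ (d), (a) $\Leftrightarrow$ (b), (b) $\Leftrightarrow$ (c). Two structural facts from the paper do most of the work: identity (\ref{neg_lambdas}) at $N=3$ gives $\Phi_{\Lambda_3}(-x) = -\Phi_{-\Lambda_3}(x)$, and Proposition \ref{PropSym} states that $P(x) := \Phi_{(\lambda_0,\lambda_1)}(x)\Phi_{(\lambda_2,\lambda_3)}(x)$ is even if and only if $A := \lambda_0+\lambda_1+\lambda_2+\lambda_3 = 0$. I also use the standard uniqueness fact that $\Phi_\Lambda$ determines the multiset $\Lambda$, which is immediate from (\ref{defPhi}) since the integrand has poles exactly at the entries of $\Lambda$ with their multiplicities.

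First I would dispose of the easy implications. The equivalence (a) $\Leftrightarrow$ (d) is a one-line consequence of (\ref{neg_lambdas}) and uniqueness: $\Phi_{\Lambda_3}$ is odd iff $\Phi_{-\Lambda_3} = \Phi_{\Lambda_3}$, iff $-\Lambda_3 = \Lambda_3$ as multisets. For (a) $\Rightarrow$ (b): symmetry of $\Lambda_3$ forces $A = 0$, so $P$ is even by Proposition \ref{PropSym}, and $\sigma = \Phi_{\Lambda_3}/P$ is the quotient of an odd function by an even one, hence odd. The equivalence (b) $\Leftrightarrow$ (c) is then nearly formal: by Theorem \ref{r_jj} the off-diagonal entries are $R_{j,j+1} = \sigma(h_j)$ and $R_{j+1,j} = -\sigma(-h_j)$ with $h_j = t_{j+1}-t_j$, so oddness of $\sigma$ directly gives $R_{j,j+1} = R_{j+1,j}$; conversely, if $R$ is symmetric for every admissible partition then $\sigma(h) + \sigma(-h) = 0$ for all $h \in (0,\delta)$, and this vanishing on an open set propagates to the full connected domain of $\sigma$ by analytic continuation.

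The only delicate direction is (b) $\Rightarrow$ (a). Using (\ref{neg_lambdas}) together with identity (\ref{eqdouble}) from Proposition \ref{PropSym}, a short calculation gives
\begin{equation*}
\sigma(-x) = -\frac{e^{Ax}\Phi_{-\Lambda_3}(x)}{P(x)},
\end{equation*}
so the oddness condition $\sigma(-x) = -\sigma(x)$ reduces to $e^{Ax}\Phi_{-\Lambda_3}(x) = \Phi_{\Lambda_3}(x)$. The shift identity $e^{Ax}\Phi_{\Lambda_N}(x) = \Phi_{\Lambda_N + A}(x)$, in which every entry of $\Lambda_N$ is translated by $A$ --- immediate from (\ref{defPhi}) via the substitution $w = z + A$, though not stated explicitly in the paper --- turns the left-hand side into $\Phi_{(A-\lambda_0,\ldots,A-\lambda_3)}(x)$, and uniqueness forces the multisets $\{A-\lambda_j\}_{j=0}^3$ and $\{\lambda_j\}_{j=0}^3$ to agree. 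Summing both multisets yields $4A - A = A$, whence $A = 0$, and the multiset identity then collapses to $-\Lambda_3 = \Lambda_3$, which is (a). The main obstacle is exactly this shift-plus-uniqueness step; once $A = 0$ is extracted, everything falls into place.
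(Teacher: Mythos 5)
Your argument is correct, and the logical skeleton (a)$\Rightarrow$(b), (b)$\Leftrightarrow$(c) coincides with the paper's; the difference lies in how the loop is closed. The paper proves c)$\Rightarrow$d) by factoring $\sigma(x)=\varphi(x)/\bigl(\psi_{(\lambda_0,\lambda_1)}\psi_{(\lambda_2,\lambda_3)}\bigr)$ with $\varphi(x)=\Phi_{\Lambda_3}(x)e^{-Ax/2}$, observing that oddness of $\sigma$ forces $\varphi$ to be odd, and then computing $\varphi^{(4)}(0)=-A$ by Leibniz's rule to conclude $A=0$; it then proves d)$\Rightarrow$a) by expanding $\Phi_{\Lambda_3}=\sum_k P_k(x)e^{\lambda_k x}$ and matching terms. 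You instead get a)$\Leftrightarrow$d) in one stroke from (\ref{neg_lambdas}) plus the fact that $\Phi_\Lambda$ determines the multiset $\Lambda$, and you close b)$\Rightarrow$a) via the shift identity $e^{Ax}\Phi_{-\Lambda_3}(x)=\Phi_{(A-\lambda_0,\ldots,A-\lambda_3)}(x)$ -- which the paper itself invokes inside the proof of Proposition \ref{PropositionMdeltaHalf} -- followed by a multiset comparison that yields $A=0$ by summing frequencies rather than by a fourth-derivative computation. This is tidier and avoids the Taylor calculation, but note that the two routes rest on the same ultimate ingredient: your "uniqueness" claim is exactly the content of the paper's d)$\Rightarrow$a) step, namely that in the residue expansion of (\ref{defPhi}) each distinct $\lambda_k$ of multiplicity $m_k$ contributes a polynomial factor of \emph{exact} degree $m_k-1$ (nonzero leading coefficient $1/\bigl((m_k-1)!\prod_{i\neq k}(\lambda_k-\lambda_i)^{m_i}\bigr)$), so that linear independence of the functions $x^s e^{\lambda x}$ pins down the multiset. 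That half-sentence of justification should be made explicit, since "the integrand has poles at the entries of $\Lambda$" does not by itself rule out cancellation; with it in place the proof is complete.
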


\begin{proof}
For $a)\rightarrow b)$ assume that $\Lambda_{3}$ is symmetric. Then
$\sigma(x)$ is odd since by Proposition \ref{PropSym}
\[
\sigma(-x)=\frac{\Phi_{(\lambda_{_{0}},...,\lambda_{_{3}})}(-x)}%
{\Phi_{(\lambda_{_{0}},\lambda_{_{1}})}(-x)\Phi_{(\lambda_{_{2}},\lambda
_{_{3}})}(-x)}=\frac{-\Phi_{(\lambda_{_{0}},...,\lambda_{_{3}})}(x)}%
{\Phi_{(\lambda_{_{0}},\lambda_{_{1}})}(x)\Phi_{(\lambda_{_{2}},\lambda_{_{3}%
})}(x)}=-\sigma(x).
\]
For $b)\rightarrow c)$ assume that $\sigma$ is odd. Then $R$ is symmetric
since by (\ref{eqRjj1})
\[
R_{_{j,j+1}}=\sigma(t_{_{j+1}}-t_{_{j}})=-\sigma\left(  -(t_{_{j+1}}-t_{_{j}%
})\right)  =R_{_{j+1,j}}.
\]
For $c)\rightarrow d)$ assume that $R$ is symmetric for any choices of
interpolation points $t_{_{1}}<...<t_{_{n}}.$ It follows that that the
function $\sigma(x)$ is odd. According to (\ref{eqPsiEven}) we can write
\[
\sigma(x)=\frac{\Phi_{(\lambda_{_{0}},...,\lambda_{_{3}})}(x)e^{^{-\left(
\lambda_{_{0}}+\lambda_{_{1}}+\lambda_{_{2}}+\lambda_{_{3}}\right)  x/2}}%
}{\psi_{(\lambda_{_{0}},\lambda_{_{1}})}(x)\psi_{(\lambda_{_{2}},\lambda
_{_{3}})}(x)}%
\]
where $\psi_{(\lambda_{_{0}},\lambda_{_{1}})}(x)\psi_{(\lambda_{_{2}}%
,\lambda_{_{3}})}(x)$ is even. Since $\sigma$ is odd it follows that
\[
\varphi(x)=\Phi_{(\lambda_{_{0}},...,\lambda_{_{3}})}(x)e^{^{-\left(
\lambda_{_{0}}+\lambda_{_{1}}+\lambda_{_{2}}+\lambda_{_{3}}\right)  x/2}}%
\]
is odd. Note that $\varphi(\cdot)$ has a zero of order $3$ at $x=0$. If
$\varphi(\cdot)$ is odd then the even derivatives are zero, so $\tau
^{^{\left(  4\right)  }}(0)=0$. According to Proposition 4 in \cite{KoRe13} we
have
\[
\Phi_{(\lambda_{_{0}},...,\lambda_{_{3}})}^{^{\left(  4\right)  }}%
(0)=\lambda_{_{0}}+\lambda_{_{1}}+\lambda_{_{2}}+\lambda_{_{3}}.
\]
Using Leibniz's formula we obtain
\begin{align*}
\varphi^{^{\left(  4\right)  }}(0)  &  =\sum_{k=0}^{4}\binom{4}{k}%
\Phi_{(\lambda_{_{0}},...,\lambda_{_{3}})}^{^{\left(  k\right)  }}(0)\left(
-\left(  \lambda_{_{0}}+\lambda_{_{1}}+\lambda_{_{2}}+\lambda_{_{3}}\right)
/2\right)  ^{^{\left(  4-k\right)  }}\\
&  =4\left(  -\left(  \lambda_{_{0}}+\lambda_{_{1}}+\lambda_{_{2}}%
+\lambda_{_{3}}\right)  /2\right)  ^{^{\left(  4-3\right)  }}+\Phi
_{(\lambda_{_{0}},...,\lambda_{_{3}})}^{^{\left(  4\right)  }}(0)\\
&  =\left(  \lambda_{_{0}}+\lambda_{_{1}}+\lambda_{_{2}}+\lambda_{_{3}%
}\right)  \left(  -2+1\right)  .
\end{align*}
Since $\varphi^{^{\left(  4\right)  }}(0)=0$ it follows that $\lambda_{_{0}%
}+\lambda_{_{1}}+\lambda_{_{2}}+\lambda_{_{3}}=0$. Hence $\Phi_{(\lambda
_{_{0}},...,\lambda_{_{3}})}(\cdot)\equiv\tau(\cdot)$ is odd.

For $d)\rightarrow a)$ assume that $\Phi_{(\lambda_{_{0}},...,\lambda_{_{3}}%
)}$ is odd. Write
\[
\Phi_{(\lambda_{_{0}},...,\lambda_{_{3}})}(x)=\sum_{k=0}^{l}P_{_{k}%
}(x)e^{^{\lambda_{k}x}}.
\]
where $l$ is an integer between $0$ and $3$, $P_{_{k}}(\cdot)$ are nontrivial
polynomials and $\lambda_{k}\neq\lambda_{j}$ for $k\neq j$. The fact that
$\Phi_{(\lambda_{_{0}},...,\lambda_{_{3}})}(\cdot)$ is odd yields
\[
\sum_{k=0}^{l}P_{_{k}}(x)e^{^{\lambda_{k}x}}+\sum_{k=0}^{l}P_{_{k}%
}(-x)e^{^{-\lambda_{k}x}}\equiv0.
\]
From here one easily obtains that the $\lambda$-s have to be pairwise opposite
to each other.
\end{proof}

It would be interesting to know when $\rho\left(  x\right)  $ is odd.

\begin{theorem}
Let $\Lambda_{3}=(\lambda_{_{0}},...,\lambda_{_{3}})$ be given. Then the
function $\rho\left(  x\right)  $ is odd if and only if $\lambda_{0}%
+\lambda_{1}=\lambda_{2}+\lambda_{3}.$
\end{theorem}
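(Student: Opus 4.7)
The forward direction is already available: if $\rho$ is odd then in particular $\rho$ is real-valued on $\mathbb{R}$, so Corollary~\ref{CorOdd} yields $\lambda_0+\lambda_1=\lambda_2+\lambda_3$. The work therefore lies in the converse. My plan is to introduce an exponential shift of the numerator $\rho_0$ that turns Theorem~\ref{ThmMainrho} into a clean first-order identity, and then read off the symmetry from Proposition~\ref{PropSym}.

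\smallskip

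\noindent Assume $\lambda_0+\lambda_1=\lambda_2+\lambda_3$ and set
\[
C:=\lambda_0+\lambda_1=\lambda_2+\lambda_3,\qquad A:=\lambda_0+\lambda_1+\lambda_2+\lambda_3=2C.
\]
Define the auxiliary function $h(x):=e^{-Cx}\rho_0(x)$, where $\rho_0$ is the numerator of $\rho$ from (\ref{eqDefrhonull}). By Theorem~\ref{ThmMainrho} we have $\rho_0'-(\lambda_0+\lambda_1)\rho_0=\Phi_{(\lambda_0,\lambda_1)}\Phi_{(\lambda_2,\lambda_3)}$, which multiplied by $e^{-Cx}$ becomes
\[
h'(x)=e^{-Cx}\,\Phi_{(\lambda_0,\lambda_1)}(x)\,\Phi_{(\lambda_2,\lambda_3)}(x).
\]

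\smallskip

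\noindent The next step is to verify that $h'$ is \emph{even}. Using Proposition~\ref{PropSym} with $A=2C$,
\[
\Phi_{(\lambda_0,\lambda_1)}(-x)\Phi_{(\lambda_2,\lambda_3)}(-x)=e^{-Ax}\Phi_{(\lambda_0,\lambda_1)}(x)\Phi_{(\lambda_2,\lambda_3)}(x)=e^{-2Cx}\Phi_{(\lambda_0,\lambda_1)}(x)\Phi_{(\lambda_2,\lambda_3)}(x),
\]
hence
\[
h'(-x)=e^{Cx}\Phi_{(\lambda_0,\lambda_1)}(-x)\Phi_{(\lambda_2,\lambda_3)}(-x)=e^{-Cx}\Phi_{(\lambda_0,\lambda_1)}(x)\Phi_{(\lambda_2,\lambda_3)}(x)=h'(x).
\]
Since $\rho_0$ vanishes to order $3$ at $0$, we also have $h(0)=0$, and an even derivative together with vanishing at the origin forces $h$ to be odd. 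Equivalently, $\rho_0(-x)=-e^{-Ax}\rho_0(x)$.

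\smallskip

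\noindent Finally, dividing by $\Phi_{(\lambda_0,\lambda_1)}\Phi_{(\lambda_2,\lambda_3)}$ and using Proposition~\ref{PropSym} once more,
\[
\rho(-x)=\frac{\rho_0(-x)}{\Phi_{(\lambda_0,\lambda_1)}(-x)\Phi_{(\lambda_2,\lambda_3)}(-x)}=\frac{-e^{-Ax}\rho_0(x)}{e^{-Ax}\Phi_{(\lambda_0,\lambda_1)}(x)\Phi_{(\lambda_2,\lambda_3)}(x)}=-\rho(x),
\]
so $\rho$ is odd, completing the converse. There is no real obstacle here: the only nontrivial input is Theorem~\ref{ThmMainrho}, which provides the ODE $D_{\lambda_0+\lambda_1}\rho_0=\Phi_{(\lambda_0,\lambda_1)}\Phi_{(\lambda_2,\lambda_3)}$, and the hypothesis $\lambda_0+\lambda_1=\lambda_2+\lambda_3$ is exactly what balances the exponential shift with the symmetry factor $e^{-Ax}$ in Proposition~\ref{PropSym}.
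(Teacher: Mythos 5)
Your converse is correct, and it takes a genuinely different route from the paper. The paper also reduces the problem (via Proposition \ref{PropSym}) to showing $\rho_{0}(-x)e^{Ax}=-\rho_{0}(x)$, but it then argues that both sides lie in the same five-dimensional exponential space $E(\lambda_{0}+\lambda_{2},\lambda_{0}+\lambda_{3},\lambda_{1}+\lambda_{2},\lambda_{1}+\lambda_{3},\lambda_{0}+\lambda_{1})$ (this is where the hypothesis $\lambda_{0}+\lambda_{1}=\lambda_{2}+\lambda_{3}$ enters, to make the frequency $\lambda_{2}+\lambda_{3}$ of the reflected side coincide with $\lambda_{0}+\lambda_{1}$) and matches the derivatives of order $0$ through $4$ at the origin, which requires the explicit values $\rho_{0}^{(3)}(0)=2$ and $\rho_{0}^{(4)}(0)=5(\lambda_{0}+\lambda_{1})+3(\lambda_{2}+\lambda_{3})$ from Theorem \ref{ThmMainrho}. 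You instead integrate: setting $h(x)=e^{-Cx}\rho_{0}(x)$ turns the identity $D_{\lambda_{0}+\lambda_{1}}\rho_{0}=\Phi_{(\lambda_{0},\lambda_{1})}\Phi_{(\lambda_{2},\lambda_{3})}$ into $h'=e^{-Cx}\Phi_{(\lambda_{0},\lambda_{1})}\Phi_{(\lambda_{2},\lambda_{3})}$, which is even precisely because $A=2C$, and an even derivative plus $h(0)=0$ forces $h$ odd. This buys you a shorter argument that needs only the ODE from Theorem \ref{ThmMainrho} and $\rho_{0}(0)=0$, with no Taylor-coefficient computations and no counting of zeros in an exponential space; the paper's method is more computational but stays within the "match finitely many derivatives in a finite-dimensional Chebyshev space" paradigm it uses elsewhere. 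Both are valid.

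One small inaccuracy in your forward direction: oddness of $\rho$ does not imply that $\rho$ is real-valued, and real-valuedness alone would only give (per the first sentence of Corollary \ref{CorOdd}) that $\lambda_{2}+\lambda_{3}-\lambda_{0}-\lambda_{1}$ is real, not zero. You should simply cite the second assertion of Corollary \ref{CorOdd}, which states directly that oddness of $\rho$ implies $\lambda_{0}+\lambda_{1}=\lambda_{2}+\lambda_{3}$ (it comes from the vanishing of the $x^{2}$ coefficient $-(\lambda_{2}+\lambda_{3}-\lambda_{0}-\lambda_{1})/24$ in the expansion of $\rho$). The citation is the right one; only the intermediate justification is off.
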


\begin{proof}
Corollary \ref{CorOdd} shows the necessity. For the converse, we see at first
by Proposition \ref{PropSym} that
\[
\rho\left(  -x\right)  =\frac{\rho_{0}\left(  -x\right)  }{\Phi_{(\lambda
_{_{0}},\lambda_{_{1}})}(-x)\Phi_{(\lambda_{_{2}},\lambda_{_{3}})}(-x)}%
=\frac{\rho_{0}\left(  -x\right)  e^{(\lambda_{_{0}}+\lambda_{_{1}}%
+\lambda_{_{2}}+\lambda_{_{3}})x}}{\Phi_{\left(  \lambda_{_{0}},\lambda_{_{1}%
}\right)  }(x)\Phi_{\left(  \lambda_{_{2}},\lambda_{_{3}}\right)  }(x)}\
\]
Thus $\rho\left(  x\right)  $ is odd if and only if
\[
\rho_{0}\left(  -x\right)  e^{(\lambda_{_{0}}+\lambda_{_{1}}+\lambda_{_{2}%
}+\lambda_{_{3}})x}=-\rho_{0}\left(  x\right)  .
\]
From Theorem \ref{ThmMainrho} it is easy to see that the exponential
polynomial
\[
g\left(  x\right)  =\rho_{0}\left(  -x\right)  e^{(\lambda_{_{0}}%
+\lambda_{_{1}}+\lambda_{_{2}}+\lambda_{_{3}})x}.
\]
in
\[
E\left(  \lambda_{0}+\lambda_{2},\quad\lambda_{0}+\lambda_{3},\quad\lambda
_{1}+\lambda_{2},\quad\lambda_{1}+\lambda_{3},\quad\lambda_{2}+\lambda
_{3}\right)
\]
where the last exponent stems from the calculation $-\left(  \lambda
_{0}+\lambda_{1}\right)  +\lambda_{_{0}}+\lambda_{_{1}}+\lambda_{_{2}}%
+\lambda_{_{3}}=\lambda_{2}+\lambda_{3}.$ Our assumption ensures that
$g\left(  x\right)  $ and $h\left(  x\right)  :=-\rho_{0}\left(  x\right)  $
are in the same exponential space (\ref{eqES}). In order to prove that $g$ and
$h$ are identical it suffices to show that $g^{\left(  j\right)  }\left(
0\right)  =h^{\left(  j\right)  }\left(  0\right)  $ for $j=0,1,2,3,4.$
Clearly this is true for $j=0,1,2,$ and clearly
\[
g^{\prime\prime\prime}\left(  0\right)  =\left(  -1\right)  ^{3}\rho
_{0}^{\prime\prime\prime}\left(  0\right)  =-2=h^{\prime\prime\prime}\left(
0\right)  .
\]
Further we see that
\begin{align*}
g^{\left(  4\right)  }\left(  0\right)   &  =\rho_{0}^{\left(  4\right)
}\left(  0\right)  +\binom{4}{3}\left(  -1\right)  2\rho_{0}^{\left(
3\right)  }\left(  0\right)  \left(  \lambda_{0}+\lambda_{1}+\lambda
_{2}+\lambda_{3}\right) \\
&  =-3\lambda_{0}-3\lambda_{1}-5\lambda_{2}-5\lambda_{3}%
\end{align*}
where we used that $\rho_{0}^{\left(  4\right)  }\left(  0\right)
=5\lambda_{0}+5\lambda_{1}+3\lambda_{2}+3\lambda_{3},$ and we see that
$g^{\left(  4\right)  }\left(  0\right)  =-\rho_{0}^{\left(  4\right)
}\left(  0\right)  =h^{\left(  4\right)  }\left(  0\right)  .$ The proof is accomplished.
\end{proof}

\bigskip
ACKNOWLEDGEMENTS

The work of H. Render and Ts. Tsachev was funded under project
KP-06-N32-8, while the work of O. Kounchev was funded under project KP-06N42-2 with Bulgarian
NSF.
The  work of the first-named author has been partially supported by Grant No BG05M2OP001-1.001-0003, financed by the Science and Education for Smart Growth Operational Program  (2014-2020)  and co-financed by the European Union through the European structural and Investment funds.

\end{document}